\newtheorem {theorem}{Theorem}[section]
\newtheorem {lemma}[theorem]{Lemma}
\newtheorem {claim}[theorem]{Claim}
\newtheorem {proposition}[theorem]{Proposition}
\newtheorem {remark}[theorem]{Remark}
\newcommand{\R}{\ensuremath{\mathbb{R}}}
\def\dfn#1{{\em #1}}
\newcommand\tb{{\rm {tb}}}
\newcommand\maxtb{{\overline{\rm {tb}}}}
\newcommand\rot{{\rm {rot}}}
\newcommand\ssl{{\rm {sl}}}
\newcommand\K{\mathcal{K}}
\newcommand\LL{\mathcal{L}}
\newcommand\Bin{B_\textrm{in}}
\newcommand\Bout{B_\textrm{out}}
\newcommand\Din{D_\textrm{in}}
\newcommand\Dout{D_\textrm{out}}
\newcommand\xist{\xi_\textrm{st}}
\begin{document}

\title[{Twist knots}]
{Legendrian and transverse twist knots}

\author[John B.\ Etnyre]{John B.\ Etnyre}
\address{
    School of Mathematics,
    Georgia Institute of Technology,
    686 Cherry St.,
    Atlanta, GA  30332}
\email{etnyre@math.gatech.edu}
\urladdr{http://math.gatech.edu/\char126 etnyre}

\author[Lenhard L.\ Ng]{Lenhard L.\ Ng}
\address{
Mathematics Department,
Duke University,
Durham, NC 27708}
\email{ng@math.duke.edu}
\urladdr{http://www.math.duke.edu/\char126 ng}

\author[Vera V\'ertesi]{Vera V\'ertesi}
\address{
Department of Mathematics,
Massachusetts Institute of Technology,
77 Massachusetts Avenue,
Cambridge, MA 02139
}
\email{vertesi@math.mit.edu}
\thanks{}

\begin{abstract}
In 1997, Chekanov gave the first example of a
  Legendrian nonsimple knot
type: the $m(5_2)$ knot. Epstein, Fuchs, and Meyer extended his result by
showing that there are at least $n$ different Legendrian representatives with maximal Thurston--Bennequin number of the twist knot $K_{-2n}$ with crossing number $2n+1$. In
this paper we give a complete classification of Legendrian and
transverse representatives of
twist knots. In particular, we show that $K_{-2n}$ has
exactly $\lceil\frac{n^2}2\rceil$ Legendrian representatives with
maximal Thurston--Bennequin number, and $\lceil\frac{n}{2}\rceil$
transverse representatives with maximal self-linking number. Our
techniques include convex surface theory, Legendrian ruling invariants, and Heegaard Floer homology.
\end{abstract}

\maketitle

\section{Introduction}
Throughout this paper, we consider Legendrian and transverse knots in
$\R^3$ with the standard contact structure $\xist = \ker(dz-y\,dx).$

A twist knot is a twisted Whitehead double of the unknot, specifically, any knot $K=K_m$ of the type shown in Figure~\ref{fig:twistn}.
\begin{figure}[ht]
  \relabelbox \small {
  \centerline{\epsfbox{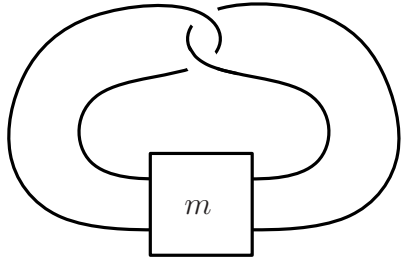}}}
  \relabel{m}{{\large $m$}}
  \endrelabelbox
        \caption{The twist knot $K_{m}$; the box contains $m$ right-handed half twists if $m\geq 0,$ and $|m|$ left-handed half twists if $m<0.$}
        \label{fig:twistn}
\end{figure}
Twist knots have long been an important class of knots to consider, particularly in contact geometry.  If Legendrian knots in a given topological knot type are determined up to Legendrian isotopy by their classical invariants, namely their Thurston--Bennequin and rotation numbers, then the knot type is said to be Legendrian simple; otherwise it is Legendrian nonsimple. While there is no reason to believe all knot types should be Legendrian simple, it has historically been difficult to prove otherwise.  Chekanov \cite{Chekanov02} and, independently, Eliashberg \cite{Eliashberg98} developed invariants of Legendrian knots that show that $K_{-4}=m(5_2)$ has Legendrian representatives that are not determined by their classical invariants, providing the first example of a Legendrian nonsimple knot. Shortly thereafter, Epstein, Fuchs, and Meyer \cite{EpsteinFuchsMeyer01} generalized the result of Chekanov and Eliashberg to show that $K_m$ is Legendrian nonsimple for all $m\leq -4$,
 and in fact that these knot types contain an arbitrarily large number of Legendrian knots with the same classical invariants. Again these were the first such examples.

One can also ask if a knot is transversely simple, that is, are transverse knots in that knot type determined by their self-linking number? It is more difficult to prove transverse nonsimplicity than Legendrian nonsimplicity. In particular, there are knot types that are Legendrian nonsimple but transversely simple \cite{EtnyreHonda03}, whereas any transversely nonsimple knot must be Legendrian nonsimple as well.  The first examples of transversely nonsimple knots were produced in 2005--6 by Birman and Menasco \cite{BirmanMenasco06II}, and Etnyre and Honda \cite{EtnyreHonda05}. It has long been suspected that some twist knots are transversely nonsimple, and this was proven very recently by Ozsv\'ath and Stipsicz \cite{OzsvathStipsicz2008Pre} using the transverse invariant in Heegaard Floer homology from \cite{LOSSz}.

Although twist knots have long supplied a useful test case for new
Legendrian invariants, such as contact homology and Legendrian
Heegaard Floer invariants (cf.\ the work of Epstein--Fuchs--Meyer and Ozsv\'ath--Stipsicz above), a complete classification of Legendrian and transverse twist knots has been elusive.
In this paper, we establish this classification and in particular identify which twist knots are Legendrian and transversely nonsimple. As a byproduct, we obtain a complete classification of an infinite family of transversely nonsimple knot types. This is one of the first (Legendrian or transversely) nonsimple families where a classification is known; see also \cite{Tosun2009Pre}.

\begin{theorem}[Classification of Legendrian twist knots] \label{thm:mtnrange}
Let $K=K_m$ be the twist knot of Figure \ref{fig:twistn}, with $m$ half twists. We discard the case $m=-1$, which is the unknot.
\begin{enumerate}
\item \label{item1a}
 For $m \geq -2$ even, there is a unique representative of $K_m$ with maximal Thurston--Bennequin number, $\tb=-m-1$. This representative has rotation number $\rot=0$,  and all other Legendrian knots of type $K_m$ destabilize to the one with maximal Thurston--Bennequin number.
\item \label{item1b}
 For $m \geq 1$ odd, there are exactly two representatives with maximal Thurston--Bennequin number, $\tb=-m-5.$ These representatives are distinguished by their rotation numbers, $\rot=\pm 1$, and a negative stabilization of the $\rot=1$ knot is isotopic to a positive stabilization of the $\rot=-1$ knot. All other Legendrian knots destabilize to at least one of these two.
\item\label{item2} For $m\leq -3$ odd, $K_m$ has $-\frac{m+1}2$ Legendrian representatives with $(\tb,\rot)=(-3,0).$ All other Legendrian knots destabilize to one of these. After any positive number of stabilizations (with a fixed number of positive and negative stabilizations), these $-\frac{m+1}{2}$ representatives all become isotopic.
\item\label{item3} For $m\leq -2$ even with $m=-2n$, $K_{m}$ has $\lceil\frac{n^2}2\rceil$ different Legendrian representations with $(\tb,\rot)=(1,0)$. All other Legendrian knots destabilize to one of these. These Legendrian knots fall into $\lceil\frac{n}{2}\rceil$ different Legendrian isotopy classes after any given positive number of positive stabilizations, and $\lceil\frac{n}{2}\rceil$ different Legendrian isotopy classes after any given positive number of negative stabilizations. After at least one positive and one negative stabilization (with a fixed number of each), the knots all become Legendrian isotopic.
\end{enumerate}
In particular, $K_m$ is Legendrian simple if and only if $m \geq -3$.
\end{theorem}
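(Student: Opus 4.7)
The plan is to combine explicit constructions with a convex-surface upper bound, and to use Floer-theoretic and ruling invariants to distinguish the representatives in the cases where classical invariants cannot.

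First, I would exhibit Legendrian representatives realizing each predicted class by drawing fronts. For $m\geq -2$ even, a standard front projection of $K_m$ immediately yields a representative with $\tb=-m-1$ and $\rot=0$, and for $m\geq 1$ odd the twist forces a symmetric pair of reps with $\rot=\pm 1$ realized by a front and its reflection, automatically distinguished by $\rot$. For $m\leq -3$ odd I would follow Epstein--Fuchs--Meyer and insert zigzags into a standard diagram in $-\frac{m+1}{2}$ inequivalent ways. For $m=-2n\leq -2$ even I would use a two-parameter family of zigzag insertions indexed by pairs $(a,b)\in\{0,\dots,n{-}1\}^2$ modulo the natural involution $(a,b)\sim(n{-}1{-}a,\,n{-}1{-}b)$, producing the target count $\lceil n^2/2\rceil$.

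Second, I would establish the matching upper bound using convex-surface theory in the spirit of Etnyre--Honda. The twist knot has a natural genus-one Seifert surface $\Sigma$, and from its Whitehead-double structure $K_m$ sits inside a solid-torus neighborhood of the unknot whose boundary provides a useful essential convex torus $T$. Given any Legendrian $L$ of type $K_m$, I would Legendrian-realize $\Sigma$ and $T$, make them convex, and read off their dividing sets. If $\tb(L)$ is maximal, no bypass can be attached to $\Sigma$ from the outside, so by Giroux's criterion $\Gamma_\Sigma$ is sharply constrained, and a finite enumeration of tight contact structures on the pieces cut out by $\Sigma\cup T$ becomes available. The central computation is a bypass-counting argument on the annulus cobounded by $L$ and a longitude of $T$: the inside bypass configurations are parametrized by the integers $(a,b)$ above, and bypasses from outside act by precisely the displayed involution. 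At the same time, if $\tb(L)$ is not maximal, a bypass from either side exhibits $L$ as a stabilization, which proves the destabilization clauses of items \eqref{item1a}--\eqref{item3}. I expect this convex-surface bookkeeping to be the main obstacle: isolating the equivalence that yields exactly $\lceil n^2/2\rceil$, rather than something larger or smaller, is the delicate combinatorial core of the proof.

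Third, I would prove that the candidates are genuinely distinct, and identify how they merge under stabilization. In items \eqref{item1a} and \eqref{item1b} the classical invariants suffice. In item \eqref{item3} I would invoke Legendrian ruling invariants, or equivalently the Chekanov--Eliashberg differential graded algebra, to separate the $-\frac{m+1}{2}$ candidates. In item \eqref{item2} I would use the transverse invariant in knot Floer homology of Lisca--Ozsv\'ath--Stipsicz--Szab\'o to separate the $\lceil n/2\rceil$ classes that survive after one positive (respectively negative) stabilization, extending the nonsimplicity argument of Ozsv\'ath--Stipsicz; combined with the involution identification from step two, this gives exactly the count $\lceil n^2/2\rceil$ before stabilization. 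Conversely, the collapse to fewer classes after enough stabilization is produced by explicit bypass moves: once $L$ has been stabilized, additional bypasses become available on $\Sigma$ and realize the required Legendrian isotopies, matching the quotients stated in items \eqref{item2} and \eqref{item3}. Combining the counts, $K_m$ is Legendrian simple if and only if each of $\lceil n^2/2\rceil$ and $-\frac{m+1}{2}$ is at most one, i.e.\ if and only if $m\geq -3$.
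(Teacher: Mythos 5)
Your overall strategy (convex-surface normalization to get an exhaustive list of nondestabilizable representatives, then rulings and Heegaard Floer to tell them apart) is the same as the paper's, though your proposed decomposition differs: you would cut along a genus-one Seifert surface and the convex torus coming from the Whitehead-double structure, whereas the paper exploits the two-bridge structure, cutting along a sphere $S$ meeting $K_m$ in four points, normalizing the dividing set of the four-punctured sphere, pinning down the tangle in $\Bout$ up to contactomorphism, and reducing the tangle in $\Bin$ to a Legendrian $2$-braid built from the blocks $X$, $S$, $Z$. Your version of this step is only a sketch, and it is by far the hardest part of the argument, but it is not where the proposal actually breaks.

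The genuine gap is in your third step, the distinguishing argument for $m=-2n\leq-4$ even. You propose to separate the $\lceil n/2\rceil$ classes surviving a positive (resp.\ negative) stabilization using the transverse invariant, and then to recover the count $\lceil n^2/2\rceil$ at maximal $\tb$ by ``combining with the involution.'' This cannot work: knowing the positive-stable class and the negative-stable class of $\K_{z^+,z^-}$ only determines $z^-$ and $z^+$ each up to the involution $z\mapsto n-1-z$, which yields at most $\lceil n/2\rceil^2$ classes, and $\lceil n/2\rceil^2<\lceil n^2/2\rceil$ already for $n=2$ (Chekanov's pair in $K_{-4}$ is precisely an example of two maximal-$\tb$ Legendrian knots that become isotopic after a single stabilization of either sign, hence are invisible to all stable invariants). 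To get the full count you must also apply a genuinely Legendrian, non-stable invariant at maximal $\tb$: the paper counts $\rho$-graded rulings, which detect $|z^++z^-+1-n|$, and it is only the combination of this with the stable information $\{z^+,n-1-z^+\}$, $\{z^-,n-1-z^-\}$ that forces $({z^+}',{z^-}')=(z^+,z^-)$ or $(n-1-z^+,n-1-z^-)$. Relatedly, you have the two invariants attached to the wrong items: for $m\leq-3$ odd the $-\frac{m+1}{2}$ representatives all become isotopic after one stabilization of either sign (these knots are transversely simple), so the Lisca--Ozsv\'ath--Stipsicz--Szab\'o invariant cannot distinguish them; there the ruling invariant alone does the job, while the Ozsv\'ath--Stipsicz computation is needed exactly for the even case to control the stabilized classes.
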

The content of Theorem~\ref{thm:mtnrange} is depicted in the Legendrian mountain ranges in Figures~\ref{fig:mainthm1} and~\ref{fig:mainthm3}. The Legendrian representatives of $K_m$ with maximal Thurston--Bennequin number will be given in Section~\ref{sec:sketch}. Note that the cases $-3\leq m\leq 2$ in Theorem~\ref{thm:mtnrange} were already known by the classification of Legendrian unknots by Eliashberg and Fraser \cite{EliashbergFraser99} and Legendrian torus knots and the figure eight knot by Etnyre and Honda \cite{EtnyreHonda01b}.

\begin{figure}
\centering
{\small
$
\vcenter{\xymatrix  @dr {
1 \ar[r]^{+} \ar[d]_{-} & 1 \ar[r]^{+} \ar[d]_{-} & 1 \ar@{.}[r] \ar@{.}[d]& \\
1 \ar[r]^{+} \ar[d]_{-} & 1 \ar@{.}[r] \ar@{.}[d]  & &\\
1 \ar@{.}[r] \ar@{.}[d] & & & \\
&&&}}
\vcenter{\xymatrix  @dr {
& 1 \ar[r]^{+} \ar[d]_{-} & 1 \ar[r]^{+} \ar[d]_{-} & 1 \ar@{.}[r] \ar@{.}[d]& \\
1 \ar[r]^{+} \ar[d]_{-} & 1 \ar[r]^{+} \ar[d]_{-}& 1 \ar@{.}[r] \ar@{.}[d]  & &\\
1 \ar[r]^{+} \ar[d]_{-} &1 \ar@{.}[r] \ar@{.}[d] & & & \\
1 \ar@{.}[r] \ar@{.}[d]&&&&\\
&&&&}}
$
}
\vspace{-3cm}
\caption{Schematic Legendrian mountain range for $K_{2n}$ ($n\ge-1$),
  left, and $K_{2n-1}$ ($n\ge1$), right.  Rotation number is plotted
  in the horizontal direction, Thurston--Bennequin number in the
  vertical direction. The numbers represent the number of Legendrian
  representatives for a particular $(\tb,\rot)$ (here, all numbers are
  $1$ since
  these knot types are Legendrian simple), and the signed arrows
  represent positive and negative stabilization.}
\label{fig:mainthm1}
\end{figure}
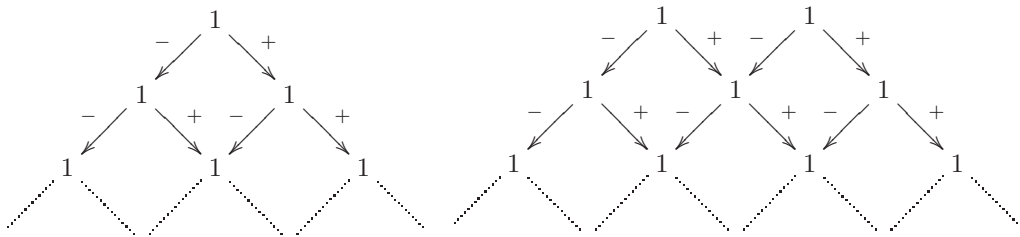

\begin{figure}
\centering
{\small
$
\vcenter{\xymatrix  @dr {
n \ar[r]^{+} \ar[d]_{-} & 1 \ar[r]^{+} \ar[d]_{-} & 1 \ar[r]^{+} \ar[d]_{-} & 1 \ar@{.}[r] \ar@{.}[d]& \\
1 \ar[r]^{+} \ar[d]_{-} & 1 \ar[r]^{+} \ar[d]_{-} & 1 \ar@{.}[r] \ar@{.}[d]  & &\\
1 \ar[r]^{+} \ar[d]_{-} & 1 \ar@{.}[r] \ar@{.}[d] & & & \\
1 \ar@{.}[r] \ar@{.}[d] & & & & \\
&&&&&}}
\vcenter{\xymatrix  @dr {
\lceil \frac{n^2}2\rceil \ar[r]^{+} \ar[d]_{-} & \lceil \frac{n}2\rceil \ar[r]^{+} \ar[d]_{-} & \lceil \frac{n}2\rceil \ar[r]^{+} \ar[d]_{-} & \lceil \frac{n}2\rceil \ar@{.}[r] \ar@{.}[d]& \\
\lceil \frac{n}2\rceil \ar[r]^{+} \ar[d]_{-} & 1 \ar[r]^{+} \ar[d]_{-} & 1 \ar@{.}[r] \ar@{.}[d]  & &\\
\lceil \frac{n}2\rceil \ar[r]^{+} \ar[d]_{-} & 1 \ar@{.}[r] \ar@{.}[d] & & & \\
\lceil \frac{n}2\rceil \ar@{.}[r] \ar@{.}[d] & & & & \\
&&&&&}}
$
}
\vspace{-3cm}
\caption{Legendrian mountain range for $K_{-2n-1}$ ($n\ge1$), left, and  $K_{-2n}$ ($n\ge1$), right.}
\label{fig:mainthm3}
\end{figure}
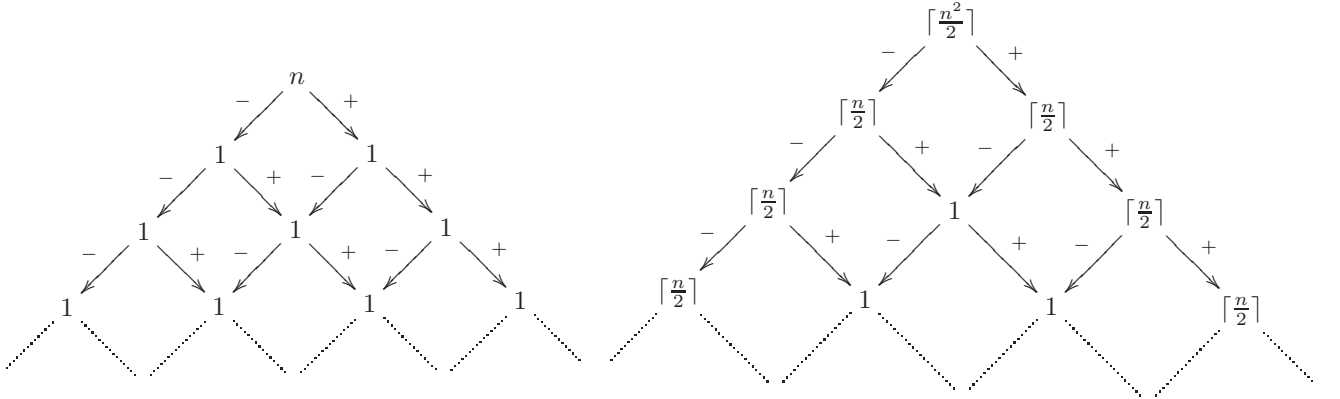

\begin{theorem}[Classification of transverse twist knots] \label{main:transverse}
Let $K=K_m$ be the twist knot of Figure \ref{fig:twistn}, with $m$ half twists.
\begin{enumerate}
\item  If $m$ is even and $m\geq -2$ or $m$ is odd, then $K_m$ is transversely simple. Moreover, the transverse representative of $K_m$ with maximal self-linking number has $\ssl=-m-1$ if $m \geq -2$ is even, $\ssl=-m-4$ if $m>-1$ is odd, $\ssl=-1$ if $m=-1$, and $\ssl=-3$ if $m<-1$ is odd.
\item If $m \leq -4$ is even with $m=-2n$, then $K_m$ is transversely nonsimple. There are $\lceil\frac{n}2\rceil$ distinct transverse representatives of $K_m$ with maximal self-linking number $\ssl=1$. Any two of these become transversely isotopic after a single stabilization, and all other transverse representatives of $K_m$ destabilize to one of these.
\end{enumerate}
\end{theorem}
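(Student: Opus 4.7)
The plan is to deduce Theorem~\ref{main:transverse} directly from the Legendrian classification in Theorem~\ref{thm:mtnrange}, together with the standard correspondence between transverse and Legendrian knots. Recall that every transverse knot $T$ arises as the transverse pushoff of some Legendrian knot $L$, with $\ssl(T)=\tb(L)-\rot(L)$. The key fact (due to Epstein--Fuchs--Meyer, and reformulated in many places since) is that the pushoff induces a bijection between transverse isotopy classes in a given topological type and equivalence classes of Legendrian representatives modulo negative stabilization. Thus, once Theorem~\ref{thm:mtnrange} is in hand, the classification of transverse twist knots reduces to reading off the quotient of each Legendrian mountain range by the move $S_-$.

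I would first dispatch the Legendrian simple cases. By parts~(\ref{item1a})--(\ref{item2}) of Theorem~\ref{thm:mtnrange}, for $m$ even with $m\geq -2$ and for $m$ odd, every Legendrian representative of $K_m$ destabilizes either to a unique peak, or (for $m\geq 1$ odd) to one of two peaks joined by the relation $S_-(L_+)=S_+(L_-)$, or (for $m\leq -3$ odd) to one of $-\tfrac{m+1}{2}$ peaks which become Legendrian isotopic after a single stabilization. In each situation the $S_-$-quotient clearly has exactly one class at each admissible self-linking level, so $K_m$ is transversely simple, and the maximum self-linking number is $\tb-\rot$ at the peak maximizing this quantity: $\ssl=-m-1$ when $m\geq -2$ is even; $\ssl=-m-4$ when $m\geq 1$ is odd (attained by the $\rot=-1$ peak, not the $\rot=+1$ peak); $\ssl=-1$ when $m=-1$; and $\ssl=-3$ when $m\leq -3$ is odd.

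The substantive case is part~(2), corresponding to part~(\ref{item3}) of Theorem~\ref{thm:mtnrange}. By that part, $K_{-2n}$ has $\lceil n^2/2\rceil$ Legendrian representatives with $(\tb,\rot)=(1,0)$, collapsing to exactly $\lceil n/2\rceil$ Legendrian classes after a single negative stabilization. Under the pushoff correspondence this gives precisely $\lceil n/2\rceil$ transverse representatives at the maximal value $\ssl=1$. Moreover the statement in (\ref{item3}) that the $\lceil n^2/2\rceil$ Legendrian classes all become isotopic after one positive and one negative stabilization translates, via a single positive stabilization of each transverse representative, into the claim that all $\lceil n/2\rceil$ maximal transverse representatives become transversely isotopic after one stabilization. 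Below $\ssl=1$, the $S_-$-quotient of the mountain range has a single class at each level, so every other transverse representative destabilizes to one of the $\lceil n/2\rceil$ at $\ssl=1$.

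The real difficulty in this theorem lies upstream in Theorem~\ref{thm:mtnrange}; once that classification is accepted, the argument here is essentially bookkeeping. The one thing I would verify carefully is that the $\lceil n/2\rceil$ groupings under negative stabilization in Theorem~\ref{thm:mtnrange}(\ref{item3}) indeed count transverse classes at $\ssl=1$ (neither finer nor coarser), and that the further collapse after applying $S_+$ to each of these is consistent with --- and in fact forced by --- the assertion that a single stabilization of each sign identifies all the Legendrian representatives at $(1,0)$. This amounts to chasing the stabilization relations supplied by Theorem~\ref{thm:mtnrange} across the pushoff bijection, which is routine.
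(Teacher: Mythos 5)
Your proposal is correct and follows essentially the same route as the paper: the paper likewise invokes the Epstein--Fuchs--Meyer correspondence between transverse isotopy classes and Legendrian classes modulo negative stabilization, and then declares Theorem~\ref{main:transverse} a direct corollary of Theorem~\ref{thm:mtnrange}. Your write-up simply makes explicit the bookkeeping (the $\ssl=\tb-\rot$ computations and the $S_-$-quotient counts) that the paper leaves to the reader.
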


We prove these classification theorems by using convex surface
techniques, along the lines of the recipe described in
\cite{EtnyreHonda01b}, to produce an exhaustive list of all
nondestabilizable Legendrian twist knots. This is the most technically
difficult part of the proof and is deferred until
Section~\ref{sec:norm}. Given this list, we use the Legendrian ruling
invariants of Chekanov--Pushkar and Fuchs, along with the
aforementioned result of Ozsv\'ath--Stipsicz, to distinguish
nonisotopic classes of Legendrian and transverse twist knots; this is
done in Section~\ref{sec:sketch}. We begin with a review of some
necessary background in Section~\ref{sec:background}.

\subsection*{Acknowledgments}

This paper was initiated by discussions between the last two authors
at the workshop ``Legendrian and transverse knots'' sponsored by the
American Institute of Mathematics in September 2008. We also
 thank Ko Honda and Andr\'as Stipsicz for helpful discussions and
 Whitney George and the referee for valuable comments on the first draft of the paper.
 JBE was partially supported by NSF grant DMS-0804820.
LLN was partially supported
by NSF grant DMS-0706777 and NSF CAREER grant DMS-0846346. VV was
partially supported by OTKA grants 49449 and 67867, and ``Magyar \'Allami \"
Oszt\"ond\'\i j''.

\section{Background and Preliminary Results}\label{sec:background}

In this section we recall some basic facts about convex surfaces and bypasses, as well as ruling invariants of Legendrian knots.

\subsection{Convex surfaces and bypasses}
Convex surfaces are the primary tool we will use in this paper. We assume the reader is familiar with this theory at the level found in \cite{EtnyreHonda01b, Giroux91, Honda00a}. For the convenience of the reader and to clarify various orientation issues we will briefly recall some of the facts about convex surfaces most germane to the proofs below, but for the basic definitions and results the reader is referred to the above references.

Recall that if $\Sigma$ is a convex surface and $\alpha$ a Legendrian arc in $\Sigma$ that intersects the dividing curves $\Gamma_\Sigma$
in 3 points $p_1,p_2,p_3$ (where $p_1, p_3$ are the endpoints of the arc), then a \dfn{bypass for $\Sigma$ (along $\alpha$)} is a
convex disk $D$ with Legendrian boundary such that
\begin{enumerate}
\item $D\cap \Sigma=\alpha,$
\item $\tb(\partial D)=-1,$
\item $\partial D= \alpha\cup \beta,$
\item $\alpha\cap\beta=\{p_1,p_3\}$ are corners of $D$ and elliptic singularities of $D_\xi.$
\end{enumerate}
The most basic property of bypasses is how a convex surface changes when pushed across a bypass.
\begin{theorem}[Honda 
\cite{Honda00a}]
Let $\Sigma$ be a convex surface and $D$ a bypass for $\Sigma$ along $\alpha\subset\Sigma.$ Inside any open neighborhood of $\Sigma\cup D$ there
is a (one sided) neighborhood $N=\Sigma\times[0,1]$ of $\Sigma\cup D$  with $\Sigma=\Sigma\times\{0\}$
(if $\Sigma$ is oriented, orient $N$ so that $\Sigma=-\Sigma\times\{0\}$ as oriented manifolds)
such that $\Gamma_\Sigma$ is related to $\Gamma_{\Sigma\times\{1\}}$ as shown in Figure~\ref{fig:bypassattach}.
\end{theorem}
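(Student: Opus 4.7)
The plan is to realize $N$ concretely as the union of an $I$-invariant collar of $\Sigma$ with a standard contact-geometric model for a neighborhood of the bypass disk $D$, and then read off the dividing set on the top slice directly from that local model.

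First I would invoke the $I$-invariant neighborhood theorem for convex surfaces to obtain $\Sigma \times [-\epsilon,\epsilon]$ in which $\xi$ is invariant under translation in the $I$-direction and whose dividing set on each slice agrees with $\Gamma_\Sigma$. Next I would set up a standard contact model for a neighborhood of $D$. Since $D$ is a convex disk with Legendrian boundary of $\tb(\partial D)=-1$ and a prescribed singular set for $D_\xi$, Giroux's flexibility together with the uniqueness of tight contact structures on a ball with prescribed boundary dividing curves implies that a thickening $D \times [-\delta,\delta]$ is contactomorphic to an explicit model in which the effect of crossing from one side of $D$ to the other is a local modification of the dividing pattern along $\alpha$ -- essentially a half-twist that interchanges the three intersection points $p_1,p_2,p_3$ with $\Gamma_\Sigma$ in the manner displayed in Figure~\ref{fig:bypassattach}.

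Next I would glue the two pieces. The $I$-invariant collar together with the bypass neighborhood forms an open region containing $\Sigma \cup D$, and by choosing $\epsilon$ and $\delta$ small this region sits inside any prescribed open neighborhood of $\Sigma \cup D$. Removing $\Sigma$ splits the region into two components; keeping the closure of the component on the side to which $D$ is attached, together with $\Sigma$ itself, gives a candidate for $N$. I would then apply Honda's edge-rounding lemma to smooth the corners along $\partial D \setminus \alpha$ and perturb the outer boundary to be convex. The result is a one-sided collar diffeomorphic to $\Sigma \times [0,1]$ with $\Sigma = \Sigma \times \{0\}$, and with the orientation convention $\Sigma = -\Sigma \times \{0\}$ forced by the choice of component. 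Finally, outside a disk neighborhood of $\alpha$ the collar is unchanged from the original $I$-invariant one, so $\Gamma_{\Sigma \times \{1\}}$ agrees with $\Gamma_\Sigma$ there; inside that disk neighborhood the explicit standard model identifies the modification with the one pictured in Figure~\ref{fig:bypassattach}.

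The main obstacle is the second step: pinning down the standard model for a bypass neighborhood and verifying that the local change of dividing curves along $\alpha$ is exactly the claimed half-twist. Once this local model is in hand, the $I$-invariant collar theorem, edge-rounding, and convex perturbation of boundaries are routine convex-surface tools, and the global dividing-set computation reduces to patching the unchanged dividing curves outside a small disk with the local picture prescribed by the model.
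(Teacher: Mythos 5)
This statement is quoted background: the paper attributes it to Honda \cite{Honda00a} and gives no proof of its own, so there is no in-paper argument to compare yours against. Judged on its own terms, your outline follows the standard proof (the one in Honda's paper and in the usual surveys): an $I$-invariant collar of $\Sigma$, a normalized model for a neighborhood of the bypass half-disk, gluing, edge rounding, and a convex perturbation of the top boundary, with the dividing set unchanged away from a disk neighborhood of $\alpha$. The skeleton is right, including the observation that the relevant component of the complement of $\Sigma$ determines the orientation convention $\Sigma = -\Sigma\times\{0\}$.

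There are two substantive issues. First, the entire content of the theorem is the local computation you explicitly defer: one must normalize the characteristic foliation on $D$ (using the elimination lemma to arrange the standard singularity pattern compatible with $\tb(\partial D)=-1$ and the elliptic corners at $p_1,p_3$), thicken, round the edge along $\beta$ and along $\alpha$, and actually read off that the three arcs of $\Gamma_\Sigma$ through $p_1,p_2,p_3$ reconnect as in Figure~\ref{fig:bypassattach}. Without that computation the proposal establishes only that \emph{some} local modification supported near $\alpha$ occurs, not the specific one claimed; since the specific reconnection is what every application in this paper uses, the proof is not complete as written. Second, the tool you invoke to pin down the model is not quite the right one: a neighborhood of $D$ is not a ball with convex boundary to which a classification of tight structures applies (and tightness near $\Sigma\cup D$ is not part of the hypotheses). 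What one actually uses is Giroux's theorem that the germ of $\xi$ along a surface is determined by the characteristic foliation, applied to $D$ after its foliation has been normalized. With that substitution and the explicit dividing-curve computation carried out, your argument becomes the standard proof.
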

\begin{figure}[ht]
  \relabelbox \small {
  \centerline{\epsfbox{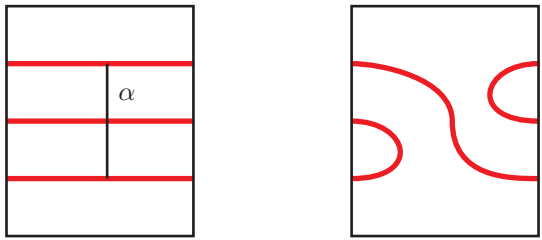}}}
  \relabel {a}{$\alpha$}
  \endrelabelbox
        \caption{Result of a bypass attachment: original surface $\Sigma$ with attaching arc $\alpha,$ left; the surface $\Sigma'=\Sigma\times\{1\}$, right. The dividing curves $\Gamma_\Sigma$ and $\Gamma_{\Sigma'}$ are shown as thicker curves.}
        \label{fig:bypassattach}
\end{figure}
In the above discussion the bypass is said to be attached from the front. To attach a bypass from the back one needs to change the orientation of the interval $[0,1]$ in the above theorem and mirror Figure \ref{fig:bypassattach}.

If $\Sigma$ and $\Sigma'$ are two convex surfaces, $\partial \Sigma'$ is a Legendrian curve contained in $\Sigma,$ and $\Sigma\cap \Sigma'=\partial \Sigma'$, then if $\Sigma'$ has a boundary parallel dividing curve (and there are other dividing curves on $\Sigma'$) then one can always find a bypass for $\Sigma$ contained in $\Sigma'$ (and containing the boundary parallel dividing curve). This is a simple application of the Legendrian realization principle \cite{Kanda}. It is useful to be able to find bypasses in other ways too. For this we have the notion of bypass rotation.

\begin{lemma}[Honda, Kazez, and Matic 
\cite{HondaKazezMatic07}]\label{moveleft}
Suppose $\Sigma$ is a convex surface containing a disk $D$ such that $D\cap \Gamma_\Sigma$ is as
shown in Figure~\ref{fig:moveright}. Also suppose $\delta$ and $\delta'$ are as shown in the figure.
If there is a bypass for $\Sigma$ attached along $\delta$ from the front side of the diagram, then there is a bypass for
$\Sigma$ attached along $\delta'$ from the front.
\end{lemma}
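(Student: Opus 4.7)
My plan is to produce the bypass along $\delta'$ by first attaching the given bypass along $\delta$ and then extracting a new bypass disk from the enlarged geometry, which can then be reinterpreted as a bypass for $\Sigma$.

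First, I would fix a small product neighborhood $N=\Sigma\times[0,1]$ of $\Sigma\cup B$ with $\Sigma=\Sigma\times\{0\}$, where $B$ is the given bypass along $\delta$. Attaching $B$ produces a convex surface $\Sigma_1=\Sigma\times\{1\}$, isotopic to $\Sigma$ inside $N$, whose dividing set agrees with $\Gamma_\Sigma$ outside a small neighborhood of $\delta$ and is modified in exactly the way prescribed by the bypass attachment theorem. In particular, the disk $D\subset\Sigma$ corresponds to a disk $D_1\subset\Sigma_1$, and the dividing pattern on $D_1$ is determined explicitly by the dividing pattern on $D$ together with the local modification along $\delta$ shown in Figure~\ref{fig:bypassattach}.

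Next, I would construct a convex disk $D'\subset N$ with Legendrian boundary $\partial D'=\delta'\cup\beta'$, where $\delta'\subset\Sigma$ is the original arc and $\beta'$ is an arc pushed slightly into the front side of $\Sigma$. The natural candidate is built from the subdisk of $D$ cobounded by $\delta$ and $\delta'$ (together with short arcs along $\partial D$), capped off by the bypass disk $B$ along $\delta$; after smoothing corners and applying the Legendrian realization principle to make $\beta'$ Legendrian with $\tb(\partial D')=-1$, this becomes a convex disk. Its dividing set $\Gamma_{D'}$ is the union of the relevant arcs of $\Gamma_\Sigma\cap D$ with the single standard dividing arc of $B$ joining the two corners of $B$ along $\delta$. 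A direct computation using the configuration of Figure~\ref{fig:moveright} shows that one component of $\Gamma_{D'}$ is a boundary-parallel arc cutting off a neighborhood of $\delta'$, and that at least one other dividing arc of $D'$ is present.

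Finally, this boundary-parallel arc of $\Gamma_{D'}$ abutting $\delta'$, together with the presence of additional dividing arcs, yields a bypass $B'\subset D'$ for $\Sigma$ along $\delta'$ by the standard application of the Legendrian realization principle mentioned just before the lemma. Because $D'$ was constructed to lie in the front side of $\Sigma$, the bypass $B'$ is attached from the front, as required.

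The main obstacle is the combinatorial step in the second paragraph: verifying that the dividing curves on the composite disk $D'$ really do contain a boundary-parallel component adjacent to $\delta'$, rather than a dividing configuration with a closed component or with a boundary-parallel arc on the wrong side. This reduces to a finite, explicit check driven by the prescribed dividing pattern of Figure~\ref{fig:moveright} and the standard dividing set of a bypass half-disk; while routine, it is the substantive content of the lemma and must be done carefully.
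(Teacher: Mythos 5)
First, note that the paper does not prove this lemma at all: it is quoted from Honda--Kazez--Mati\'c \cite{HondaKazezMatic07}, so there is no in-paper proof to match. Their argument, moreover, does not proceed by an explicit disk construction; it relies on the nontrivial fact that ``trivial'' bypasses always exist, applied to the arc $\delta'$ viewed on the surface obtained after attaching the bypass along $\delta$. Your strategy --- building a convex disk $D'$ from the subregion of $D$ between $\delta$ and $\delta'$ capped off by the bypass half-disk, then extracting a bypass from a boundary-parallel dividing arc --- is a legitimate alternative in outline, but as written it has a genuine gap and some internal inconsistencies.

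Concretely: (1) You cannot have both $\tb(\partial D')=-1$ and a dividing set $\Gamma_{D'}$ containing a boundary-parallel arc \emph{plus} ``at least one other dividing arc''; $\tb(\partial D')=-1$ forces $\Gamma_{D'}$ to be a single properly embedded arc. In fact, since $D'\cap\Sigma=\delta'$ and $\delta'$ crosses $\Gamma_\Sigma$ in three points, the dividing arcs of $D'$ must meet $\delta'$ in three interleaved points, so $\tb(\partial D')\le -2$ automatically; the correct statement of your setup is that $D'$ is \emph{not} itself a bypass but should contain one. (2) The claim that $\Gamma_{D'}$ is simply the union of the arcs of $\Gamma_\Sigma\cap D$ with the dividing arc of the bypass half-disk is unjustified: the composite surface has a corner along $\delta$, and rounding it reconnects the dividing arcs with a definite shift (Lemma~\ref{edgerounding}); moreover the piece of $D'$ near $\delta'$ is transverse to $\Sigma$ rather than parallel to it, so its dividing arcs are not just pushed-off copies of $\Gamma_\Sigma\cap D$. (3) Most importantly, the ``finite, explicit check'' you defer is the entire content of the lemma: bypass rotation holds only for the direction of rotation shown in Figure~\ref{fig:moveright} and is false for the mirror configuration, and it is precisely the edge-rounding direction and the position of the endpoints of the bypass's dividing arc on $\delta$ that break this symmetry. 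An argument that does not carry out this computation has not yet distinguished the true statement from its false mirror, so the proof is incomplete at its crux.
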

\begin{figure}[ht]
  \relabelbox \small {\centerline {\epsfbox{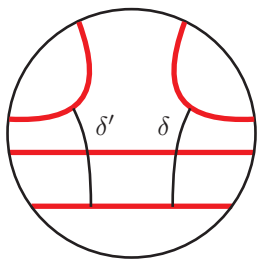}}} \relabel
        {d}{$\delta$} \relabel {2}{$\delta'$} \endrelabelbox
        \caption{If there is a bypass for $\delta$ then there is one for $\delta'$ as well.}
        \label{fig:moveright}
\end{figure}

We end our brief review of convex surfaces by describing how  two convex surfaces that come together along a Legendrian circle in their boundary can be made into a single convex surface by rounding their corners.
\begin{lemma}[Honda 
\cite{Honda00a}]\label{edgerounding}
  Suppose that $\Sigma$ and $\Sigma'$ are convex surfaces with dividing curves $\Gamma$ and
  $\Gamma'$ respectively, and $\partial \Sigma'=\partial \Sigma$ is Legendrian. Model $\Sigma$ and
  $\Sigma'$ in $\R^3$ by $\Sigma=\{(x,y,z):x=0, y\geq 0\}$ and $\Sigma'=\{(x,y,z):y=0, x\geq 0\}$. Then we may form a
  surface $\Sigma''$ from $S=\Sigma\cup \Sigma'$ by replacing $S$ in a small
  neighborhood $N$ of $\partial \Sigma$ (thought of as the $z$-axis) with the intersection of
  $N$ with $\{(x,y,z): (x-\delta)^2 +(y-\delta)^2=\delta^2\}$. For a suitably chosen $\delta,$
  $\Sigma''$ will be a smooth surface (actually just $C^1$, but it can then be smoothed by a
  $C^1$ small isotopy which can easily be seen not to change the characteristic foliation) with
  dividing curve as shown in Figure~\ref{round}.
\end{lemma}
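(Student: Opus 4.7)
My plan is to reduce to an explicit local computation in a standard contact neighborhood of the common Legendrian boundary $L = \partial\Sigma = \partial\Sigma'$. By the Legendrian neighborhood theorem, a tubular neighborhood of $L$ admits contact coordinates in which $L$ is the $z$-axis, $\xi = \ker(dz - y\,dx)$ (or $\ker(dz + x\,dy)$ after renaming), and the two surfaces look locally like the half-planes $\Sigma = \{x=0,\, y \geq 0\}$ and $\Sigma' = \{y=0,\, x \geq 0\}$, up to a contactomorphism that does not change the dividing sets (away from the boundary). Because $L$ is Legendrian and the characteristic foliations of $\Sigma$ and $\Sigma'$ near $L$ are determined up to isotopy by the fact that $L$ is a Legendrian divide-free ruling curve, the only extra data to keep track of are the endpoints of the dividing curves of $\Sigma$ and $\Sigma'$ on $L$, which occur at the points where $\xi$ is tangent to $\Sigma$, resp.\ $\Sigma'$.

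Next, I would replace $\Sigma \cup \Sigma'$ near $L$ with the quarter-cylinder $Q_\delta = \{(x-\delta)^2 + (y-\delta)^2 = \delta^2\} \cap \{x,y\geq 0\}$, as prescribed in the statement. Parametrizing $Q_\delta$ by $(\theta, z)$ with $x = \delta(1-\cos\theta)$, $y = \delta(1-\sin\theta)$ for $\theta \in [0, \pi/2]$, one can write down the characteristic foliation explicitly: pulling back $dz - y\,dx$ to $Q_\delta$ gives a $1$-form whose kernel defines a linear foliation (with a small $\delta$-dependent tilt) transverse to the $z$-direction except along one curve, which is precisely the dividing set of $Q_\delta$. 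A direct computation shows that for $\delta$ small, the characteristic foliation has a contact vector field transverse to it (for instance, $\partial_z$ suffices up to rescaling), so $Q_\delta$ is convex, and its dividing curves are arcs going diagonally from the $y = 0$ edge to the $x = 0$ edge, shifted by the amount dictated by how far the point of tangency of $\xi$ with $Q_\delta$ has been pushed along $L$ as $\theta$ varies from $0$ to $\pi/2$.

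Once this local picture is understood, assembling the global surface $\Sigma''$ is straightforward: away from $L$ it agrees with $\Sigma \cup \Sigma'$ and is convex by hypothesis; near $L$ it agrees with $Q_\delta$ and is convex by the model. The two contact vector fields can be matched on the overlap using a partition of unity, exactly as in the standard proof that convexity is an open and $C^\infty$-stable condition. The smoothness claim (initially only $C^1$ at the two seams) is then handled by a $C^1$-small isotopy supported away from $\Gamma_{\Sigma''}$, which does not affect the characteristic foliation up to topological equivalence and hence does not change the dividing set.

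The only genuine point to watch is the combinatorics of how the dividing arcs on $\Sigma$ and $\Sigma'$ reconnect after rounding: a dividing point at height $z_0$ on the $\Sigma$-side must connect to a point at height $z_0 \pm \epsilon$ on the $\Sigma'$-side (with the sign and shift determined by the sign of the rotation of $\xi$ along $L$), and this is exactly the diagonal reconnection depicted in Figure~\ref{round}. So the main obstacle—and really the only nontrivial computation—is verifying this shift direction and matching it to the picture; everything else is a routine application of the Legendrian neighborhood theorem and Giroux's criterion for convexity.
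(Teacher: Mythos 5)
The paper gives no proof of this lemma: it is quoted verbatim from Honda \cite{Honda00a} as background, so strictly speaking there is nothing internal to compare against. Your overall strategy --- pass to a standard neighborhood of the common Legendrian boundary, compute the characteristic foliation on the quarter-cylinder explicitly, and read off how the dividing arcs reconnect --- is exactly Honda's, and you correctly identify the diagonal shift as the one nontrivial point. However, the execution has a genuine error at the very first step. In the models $\ker(dz-y\,dx)$ and $\ker(dz+x\,dy)$, the $z$-axis is a \emph{transverse} curve, not a Legendrian one, since $dz$ evaluates to $1$ on $\partial_z$ there. The correct normal form for a neighborhood of a Legendrian curve $L$ with the $z$-axis as $L$ is $\xi=\ker\bigl(\cos(nz)\,dx-\sin(nz)\,dy\bigr)$ (on $S^1\times\R^2$ for a closed curve), where the twisting $n$ is determined by $\tb$ of $L$ relative to the surface framing; it is this twisting that produces the $2|n|$ endpoints of $\Gamma$ and $\Gamma'$ on $L$ and hence the shift by half a period in the reconnection. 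With your form the pullback to $Q_\delta$ is nonvanishing with no tangencies at all, so the computation you describe cannot produce the dividing set of Figure~\ref{round}.

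Two further points would need repair even after fixing the model. First, $\partial_z$ is \emph{tangent} to the $z$-invariant quarter-cylinder $Q_\delta$, so it cannot serve as the transverse contact vector field certifying convexity; one must instead use (an interpolation of) the constant fields $a\partial_x+b\partial_y$ with $a,b>0$, which are contact for the correct normal form and are transverse to the whole rounded region. Second, you conflate the singular locus of the characteristic foliation on $Q_\delta$ with its dividing set: in the normal form the quarter-cylinder carries $2|n|$ Legendrian divides (curves of tangency) given by $\cos(nz+\beta)=0$ and $2|n|$ dividing arcs given by $\sin(nz+\beta)=0$, interleaved with them; it is the latter that run diagonally from $\theta$-level $k\pi/n$ on the $\Sigma$-edge to $k\pi/n-\pi/(2n)$ on the $\Sigma'$-edge, which is the content of the sentence following the lemma about the dividing curves moving up or down according to whether $\Sigma'$ lies to the right or left of $\Sigma$.
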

\begin{figure}[ht]
  {
  \centerline{\epsfbox{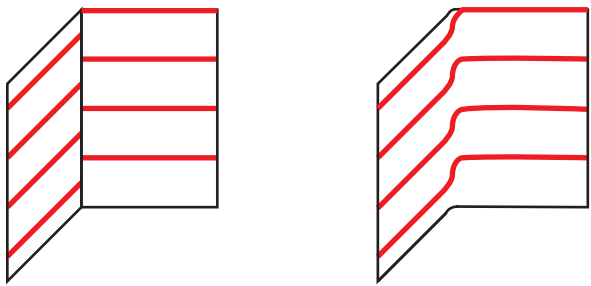}}}
  \caption{Rounding a corner between two convex surfaces. On the left, $\Sigma\cup\Sigma'$; on the right, $\Sigma''$.}
  \label{round}
\end{figure}
In this lemma, rounding a corner causes the dividing curves on the two surfaces to connect
up as follows: moving from $\Sigma$ to $\Sigma'$, the dividing curves move up (down) if $\Sigma'$ is
to the right (left) of $\Sigma.$

\subsection{Ruling invariants}
\label{subsec:ruling}

In order to distinguish between Legendrian isotopy classes of twist
knots in Section~\ref{sec:sketch}, we use invariants of Legendrian
knots in standard contact $\mathbb{R}^3$ known as the $\rho$-graded ruling
invariants, as introduced by Chekanov--Pushkar
\cite{ChekanovPushkar05} and Fuchs \cite{Fuchs}. Here we very
briefly recall the relevant definitions and results; for further details, see, e.g., the above papers or \cite{EtnyreLegendriansurvey}.

Given the front ($xz$) projection of a Legendrian knot in
$\R^3$, a \textit{ruling} is a one-to-one correspondence between left and right
cusps, along with a decomposition of the front as a union of pairs
of paths beginning at a left cusp and ending at the corresponding
right cusp, satisfying the following conditions:
\begin{itemize}
\item
all paths are smooth except possibly at double points (crossings) in
the front, and never change direction with respect to $x$ coordinate;
\item
the two paths for a particular pair of cusps do not intersect except
at the two cusp endpoints;
\item
any two arbitrary paths intersect at most at cusps and crossings;
\item
at a crossing where two paths (which must necessarily have different endpoints) intersect and one lies entirely above
the other (such a crossing is a \textit{switch}), the two paths and
their companion paths must be arranged locally as in Figure~\ref{fig:switches}.
\end{itemize}
See Figure~\ref{fig:ruling-maxtb} for examples of rulings; note that a ruling is uniquely determined by its switches, and can be thought of as a ``partial $0$-resolution'' of the front.

\begin{figure}[ht]
\centerline{
\includegraphics[width=0.8\textwidth]{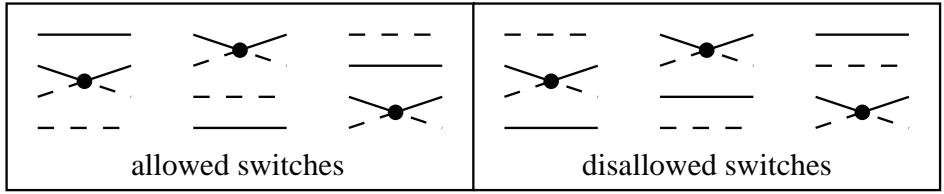}
}
\caption{Allowed and disallowed switches in a ruling. In each diagram, the two solid arcs are paired together (i.e., share cusp endpoints), as are the two dashed arcs. Other pairs of arcs, which may be present, are not shown.}
\label{fig:switches}
\end{figure}

\begin{figure}
\centerline{
\includegraphics[width=0.8\textwidth]{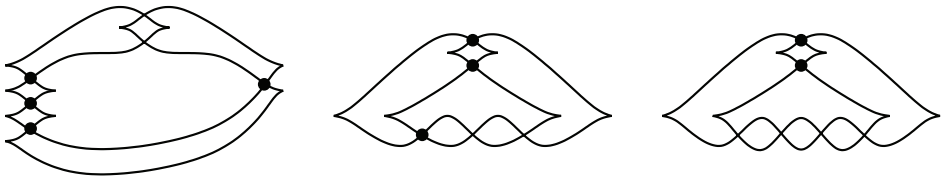}
}
\caption{
Rulings of Legendrian versions of the twist knots $K_{-4}$, $K_3$, and $K_4$. Dots indicate switches.
}
\label{fig:ruling-maxtb}
\end{figure}

One can refine the concept of a ruling by considering Maslov degrees. Removing the $2c$ left and right cusps from a front (not necessarily with a ruling) yields $2c$ arcs, each connecting a left cusp to a right cusp. If $\rot$ is the rotation number of the front, then we can assign integers (\textit{Maslov numbers}) mod $2\,\rot$ to each of these arcs so that at each cusp, the upper arc (with higher $z$ coordinate) has Maslov number $1$ greater than the lower arc; for a connected front, these numbers are well-defined up to adding a constant to all arcs. To each crossing in the front, we can define the \textit{Maslov degree} to be the Maslov number of the strand with more negative slope minus the Maslov number of the strand with more positive slope. Finally, if $\rho$ is any integer dividing $2\,\rot$, then we say that a ruling of the front is \textit{$\rho$-graded} if all switches have Maslov degree divisible by $\rho$. In particular, a $1$-graded ruling (also known as an ung
 raded ruling)
  is a ruling with no condition on the switches.

\begin{proposition}[Chekanov and Pushkar \cite{ChekanovPushkar05}] \label{prop:ChP}
Let $\K$ be a Legendrian knot with rotation number $\rot(K)$. For any $\rho$ dividing $2\,\rot(\K)$, the number of $\rho$-graded rulings of the front of $\K$ is an invariant of the Legendrian isotopy class of $\K$.
\end{proposition}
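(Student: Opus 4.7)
The plan is to verify invariance under the Legendrian Reidemeister moves for front diagrams. Since any two fronts representing Legendrian isotopic knots in $(\R^3,\xist)$ are related by a sequence of planar isotopies of fronts together with the three local Legendrian Reidemeister moves (the triple-point move, the move passing a crossing through a cusp, and the move exchanging the heights of two neighboring cusps), it suffices, for each move, to construct an explicit bijection between $\rho$-graded rulings on the two sides. Outside the small disk in which the move takes place, the ruling data carries over unchanged, so one only needs to match up the finitely many local configurations inside the disk.

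Before beginning the case analysis I would first check that the Maslov data transfers correctly. Since the rotation number is unchanged by Legendrian isotopy, the condition $\rho \mid 2\,\rot(\K)$ is well-posed, and fixing a Maslov number on one arc outside the disk uniquely propagates a Maslov numbering across the move. Consequently the Maslov degree of every crossing inside the disk is determined, and preservation of the condition ``every switch has Maslov degree divisible by $\rho$'' can be tracked crossing by crossing. The easy cases are planar isotopy (nothing to check) and the cusp-height-exchange and cusp-crossing moves; in the latter, the local pictures are so constrained that in any allowed ruling the crossing moved past the cusp cannot be a switch (it would violate the switch conditions of Figure~\ref{fig:switches} involving the nearby cusp), and the bijection is essentially forced.

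The main obstacle is the triple-point move, where three strands meet in a small disk giving rise to three crossings $c_1,c_2,c_3$ on one side and three corresponding crossings $c_1',c_2',c_3'$ on the other. I would enumerate the ways the three incoming path-pairs determined by the ruling outside the disk can be joined up inside, together with the choices of which of $c_1,c_2,c_3$ are switches, subject to the local compatibility conditions of Figure~\ref{fig:switches}. A careful check shows that the configurations on the two sides pair up bijectively; moreover the arithmetic identity $\deg(c_1) + \deg(c_3) = \deg(c_2)$ among the Maslov degrees of the three crossings at a triple point (coming from the fact that the three strands differ pairwise in Maslov number by amounts summing to zero around the triangle) guarantees that the bijection sends $\rho$-graded configurations to $\rho$-graded configurations. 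Carrying out this case-by-case matching, though elementary, is the technical core of the proof and is the step I would devote the most care to; once it is complete, the proposition follows immediately by concatenating the bijections along any sequence of Reidemeister moves realizing a given Legendrian isotopy.
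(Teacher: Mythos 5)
The paper does not prove this proposition at all: it is quoted as a known result of Chekanov--Pushkar (and Fuchs), with the proof living entirely in \cite{ChekanovPushkar05}. So there is no in-paper argument to compare yours against; what you have written is a reconstruction of the strategy of the original published proof, namely invariance under the three Legendrian Reidemeister moves for fronts via explicit local bijections of rulings. That is the right and, as far as I know, the only known approach.

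That said, as it stands your text is a plan rather than a proof, and the two places where you wave your hands are exactly where all the content sits. First, for the cusp--crossing move you assert that the relevant crossing ``cannot be a switch''; this is not automatic --- whether a switch at a crossing adjacent to a cusp is permitted by the normality conditions of Figure~\ref{fig:switches} depends on whether the strand passing the cusp lies above or below it and on where the companion paths sit, so both local configurations must be checked, and in at least one of them a switch \emph{is} allowed and must be matched with a switch on the other side. Second, for the triple-point move the ``careful check'' is a genuine case analysis over the possible pairings of the three strands and the possible switch subsets at $c_1,c_2,c_3$ consistent with normality; the bijection is not the identity on switch sets (a switch can migrate from one crossing to another across the move), so one must exhibit the correspondence explicitly and only then invoke the identity $\deg(c_1)+\deg(c_3)=\deg(c_2)$ to see that $\rho$-gradedness is preserved. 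Until those enumerations are written out, the proposal should be regarded as an accurate outline of the Chekanov--Pushkar argument rather than a complete proof.
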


The existence of rulings is closely related to the maximal Thurston--Bennequin number of a knot.

\begin{proposition}[Rutherford \cite{Rutherford06}] \label{prop:rutherford}
If a Legendrian knot $\K$ admits an ungraded ruling, then it maximizes Thurston--Bennequin number within its topological class.
\end{proposition}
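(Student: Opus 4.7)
The plan is to deduce Rutherford's theorem from the Franks--Williams--Morton (FWM) inequality, which states that for any Legendrian knot $\K$ of topological type $K$,
\[
\tb(\K) \leq -1 - e_a(P_K),
\]
where $P_K(a,z)$ is the HOMFLY polynomial and $e_a$ denotes the minimum $a$-degree. Since the right-hand side depends only on the topological type, any single Legendrian representative that attains equality necessarily maximizes $\tb$ within its topological class.

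To produce such an equality from a ruling, I would introduce the (ungraded) ruling polynomial of a front $D$,
\[
R_D(z) = \sum_\rho z^{j(\rho)},
\]
where the sum runs over ungraded rulings $\rho$ of $D$ and $j(\rho)$ is the standard non-negative combinatorial weight (essentially switches minus cusp-pairs, i.e.\ the negative Euler characteristic of the decomposition of $D$ into companion path pairs). The main technical claim is that $R_D(z)$ agrees, up to a shift depending on $\tb(\K)$ and the writhe of $D$, with the coefficient of $a^{-\tb(\K)-1}$ in $P_K(a,z)$.

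I would prove this identity by induction on the number of crossings, applying the HOMFLY skein relation at a chosen crossing $c$ of $D$. The three terms in the skein correspond to rulings in which $c$ is a switch versus rulings in which it is not: the two non-switch configurations are distinguished by which strand locally lies above the other in the companion paths and match the two signed resolutions, while a switch at $c$ matches the oriented smoothing. The allowed switch configurations in Figure~\ref{fig:switches} are exactly what is needed for the multiplicities to line up with the skein. The base case is a standard Legendrian unknot front, where both sides can be computed directly from the definition.

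The main obstacle I expect is this skein-theoretic matching: one must verify carefully that $j(\rho)$ transforms under the three HOMFLY resolutions in a way consistent with the variable $z$, and that the contribution of each right cusp combines with the writhe to produce exactly the framing factor $a^{-\tb(\K)-1}$. Once the matching is in hand, the conclusion is quick: $R_D(z)$ has non-negative integer coefficients and is nonzero whenever $\K$ admits at least one ruling, so the coefficient of $a^{-\tb(\K)-1}$ in $P_K$ is nonzero, whence $-\tb(\K) - 1 \geq e_a(P_K)$. Combined with the FWM bound, this forces $\tb(\K) = -1 - e_a(P_K)$, which is the maximum possible value of $\tb$ in the topological class of $K$.
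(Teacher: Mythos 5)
The paper does not prove this proposition; it is quoted verbatim from Rutherford \cite{Rutherford06}, so you are reconstructing an external argument. Your architecture --- identify the ruling polynomial with a coefficient of a classical polynomial invariant by skein induction, then invoke the corresponding topological upper bound on $\tb$ to force sharpness --- is exactly Rutherford's. However, you have paired the \emph{ungraded} ruling polynomial with the wrong invariant. The identity you propose, and the skein induction behind it, are the ones that hold for the \emph{$2$-graded} ruling polynomial and the HOMFLY polynomial; the ungraded ruling polynomial is instead (up to the analogous normalization) a coefficient of the \emph{Kauffman} polynomial, and the bound one must invoke is the Kauffman-polynomial bound $\tb(\K)\leq -\deg_a F_K(a,z)-1$, not the Franks--Williams--Morton bound.

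The step that fails is precisely the skein-theoretic matching at a switch. In a front, the parity of the Maslov degree of a crossing records whether the two strands point in the same or in opposite $x$-directions, since the $x$-direction reverses at each cusp exactly when the Maslov number jumps by one. At a crossing of even Maslov degree the switch configuration of Figure~\ref{fig:switches} is the oriented smoothing and your HOMFLY induction goes through; at a crossing of odd Maslov degree the switch configuration is the \emph{disoriented} smoothing, which does not occur in the HOMFLY skein relation at all. An ungraded ruling is allowed to switch at odd-degree crossings --- that is the whole point of dropping the grading --- so the induction breaks there, and indeed the number of ungraded rulings is in general not determined by the HOMFLY polynomial. The Kauffman (Dubrovnik) skein relation contains exactly the extra unoriented-smoothing term needed to absorb these switches, which is why Rutherford's statement for ungraded rulings is a Kauffman-polynomial statement. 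With that substitution (Kauffman polynomial, Kauffman skein relation, Kauffman $\tb$ bound) your outline becomes correct; keeping HOMFLY as written, you would only be proving the weaker assertion that a $2$-graded ruling forces maximal $\tb$.
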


For twist knots, Proposition~\ref{prop:rutherford} allows easy calculation of the maximal value of $\tb$; we note that the following result can also be derived from the more general calculation for two-bridge knots from \cite{Ng2bridge}.

\begin{proposition} \label{prop:tbbound}
The maximal Thurston--Bennequin number for $K_m$ is:
\[
\overline{\tb}(K_m) =
\begin{cases}
-m-1 & m \geq 0 \text{ even} \\
-m-5 & m \geq 1 \text{ odd} \\
-1 & m=-1 \\
1 & m \leq -2 \text{ even} \\
-3 & m \leq -3 \text{ odd}.
\end{cases}
\]
\end{proposition}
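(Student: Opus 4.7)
The plan is to establish the value of $\overline{\tb}(K_m)$ case by case via Proposition~\ref{prop:rutherford}: for each $m$ I will exhibit an explicit Legendrian representative of $K_m$ whose Thurston--Bennequin number equals the stated value and which admits an ungraded ruling. Rutherford's theorem then forces this representative to maximize $\tb$ within its topological class, which yields both the lower and upper bounds on $\overline{\tb}$ simultaneously.

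First, for each parity/sign case I would convert the topological diagram of Figure~\ref{fig:twistn} into a Legendrian front. For $m \geq 0$ even, a zigzag clasp together with alternating cusps inside the twist box gives a front whose writhe minus number of right cusps is $-m-1$ (compare the $K_4$ picture in Figure~\ref{fig:ruling-maxtb}). For $m \geq 1$ odd, an additional pair of zigzags must be inserted in order to realize the correct topological type; this extra structure is responsible for the drop of four from $-m-1$ to $-m-5$ (compare the $K_3$ picture). The cases $m \leq -2$ even and $m \leq -3$ odd are handled analogously, with the clasp reworked so that the twist box contains left-handed half-twists (compare the $K_{-4}$ picture). The case $m=-1$ is the unknot and is drawn with $\tb=-1$ directly.

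Next, for each constructed front I would exhibit an explicit ungraded ruling by prescribing its switches: in the model diagrams of Figure~\ref{fig:ruling-maxtb}, the switches are placed along alternating crossings in the twist box together with a few chosen crossings in the clasp region, and a direct local inspection confirms that only the allowed switch configurations of Figure~\ref{fig:switches} arise. Proposition~\ref{prop:rutherford} then immediately yields that the constructed representative achieves $\overline{\tb}(K_m)$, completing the proof.

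The main technical step is the combinatorial bookkeeping: verifying in each of the five cases that the displayed front has precisely the claimed value of $\tb$ and that a consistent assignment of switches produces a valid ruling. Nothing conceptually deep is required, but the parity and sign casework must be tracked carefully, and this is the part of the argument that occupies the most attention. As a cross-check, the general computation of the maximal Thurston--Bennequin number for two-bridge knots in \cite{Ng2bridge} specializes to yield the same values for $K_m$.
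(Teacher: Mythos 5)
Your proposal follows exactly the paper's own argument: exhibit Legendrian fronts generalizing the $K_{-4}$, $K_3$, $K_4$ examples of Figure~\ref{fig:ruling-maxtb}, equip each with an ungraded ruling, invoke Proposition~\ref{prop:rutherford} to conclude maximality of $\tb$, and compute $\tb$ directly (with the \cite{Ng2bridge} cross-check the paper also mentions). This is correct and essentially identical to the paper's proof.
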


\begin{proof}
Figure~\ref{fig:ruling-maxtb} shows ungraded rulings for Legendrian forms of $K_{-4}$, $K_3$, and $K_4$; these have obvious generalizations to Legendrian knots of type $K_m$ for $m \leq -2$, $m \geq 1$ odd, and $m \geq 0$ even, respectively, each of which has an ungraded ruling. It follows from Proposition~\ref{prop:rutherford} that each of these knots maximizes $\tb$. Easy calculations of Thurston--Bennequin numbers for each case (along with the fact that $K_{-1}$ is the unknot) yield the proposition.
\end{proof}

\section{The Classification of Legendrian Twist Knots}\label{sec:sketch}

In this section we will classify Legendrian and transverse twist knots by proving Theorems~\ref{thm:mtnrange} and \ref{main:transverse}. We begin with several preliminary results that will be proved in Section~\ref{sec:norm}.
\begin{theorem}\label{thm:frontneg}
For $m\leq -2$,
any Legendrian representative of $K=K_{m}$  with maximal $\tb$ is Legendrian isotopic to some Legendrian knot whose front projection is of the form depicted in Figure~\ref{fig:legtwistn}, where the rectangle contains $|m+2|$ negative half twists each of which is of type $Z$ or $S$.
\end{theorem}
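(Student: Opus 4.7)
The plan is to treat a maximal-$\tb$ Legendrian representative $\K$ of $K_m$ as bounding a convex Seifert surface, and then to use the bypass and edge-rounding machinery from Section~\ref{sec:background}, in the spirit of \cite{EtnyreHonda01b}, to put $\K$ into the desired normal form. First, I would use Proposition~\ref{prop:tbbound} to pin down $\tb(\K)$, and hence, via the Thurston--Bennequin inequality applied to a convex Seifert surface $\Sigma$ for $\K$ (obtained after a $C^0$-small perturbation), to pin down the exact number of dividing arcs on $\Sigma$. Since $\tb(\K)$ is maximal, no boundary-parallel dividing arc can appear on $\Sigma$, as any such arc would produce a bypass that destabilizes $\K$. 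Combined with the fact that $K_m$ has genus one, this essentially determines the combinatorics of the dividing set of $\Sigma$ up to a finite choice.

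Next, I would decompose $\Sigma$ along a Legendrian arc (arranged using the Legendrian realization principle) into a clasp band $A$ and a twist band $T$ carrying $|m|$ half twists, corresponding to the standard plumbing presentation of $K_m$ as a once-punctured torus. Applying Lemma~\ref{edgerounding} to reconstruct the dividing set of $\Sigma$ from those of $A$ and $T$, one finds that the clasp piece $A$ admits essentially a unique configuration of dividing curves compatible with maximal $\tb$, so that a standard convex model shows that the portion of the front coming from $A$ can be put in the form of the clasp drawn in Figure~\ref{fig:legtwistn}, accounting for two of the $|m|$ half twists. This reduces the problem to the convex twist rectangle $T$ carrying the remaining $|m+2|$ negative half twists.

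The technical heart of the argument is then to show that, after further Legendrian isotopy, each half twist inside $T$ can be placed into one of the two standard local front models $Z$ or $S$. The absence of destabilizing bypasses forces each half twist to admit only two possible local dividing-arc patterns, and these correspond precisely to the $Z$ and $S$ front shapes; whenever a half twist is not already in one of these forms, I would use bypass rotation (Lemma~\ref{moveleft}) together with the bypass attachment theorem to slide bypasses between adjacent half twists and restore a $Z$ or $S$ shape at each step, without changing $\tb$ or the topological knot type. The main obstacle, which I expect to occupy the bulk of Section~\ref{sec:norm}, is the detailed case analysis needed to verify that every non-standard local picture in $T$ admits such a bypass rearrangement, and that the process terminates after finitely many moves.
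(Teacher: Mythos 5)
Your outline takes a genuinely different route from the paper: you propose normalizing the dividing set of a convex genus-one Seifert surface, whereas the paper exploits the two-bridge structure of $K_m$, splitting $S^3$ along a sphere $S$ meeting $\K$ in four points, normalizing $\Gamma_S$ (Theorem~\ref{thm:sphere}, via the $\tb=-1$ unknot of Lemma~\ref{lem:sphere}), pinning down the clasp tangle in $\Bout$ through a genus-2 handlebody argument (Theorem~\ref{thm:bout}), and then quoting the classification of Legendrian $2$-braids in a ball (Theorem~\ref{thm:braids}, Proposition~\ref{prop:leg2braid}) for the twist region in $\Bin$. Unfortunately your route, as sketched, has several genuine gaps. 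First, for $m\leq -2$ even the maximal Thurston--Bennequin number is $+1$ (Proposition~\ref{prop:tbbound}), so the Seifert surface of a maximal-$\tb$ representative cannot be made convex with Legendrian boundary $\K$: convexity requires the contact framing to be non-positive relative to the surface framing. One can work instead with $\Sigma$ truncated along the boundary of a standard neighborhood of $\K$, but then the bookkeeping of dividing arcs and the destabilization criterion both change, and this is exactly the case ($K_{-2n}$) where the theorem has real content.

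Second, excluding boundary-parallel dividing arcs does not determine the dividing set of a once-punctured torus ``up to a finite choice'': essential arcs can have any slope in $\Q\cup\{\infty\}$, and homotopically essential closed dividing curves are not excluded by Giroux's criterion. Pinning down the slope requires analyzing the complement of $\Sigma$, which is a genus-2 handlebody (twist knots are not fibered, so this is not a product), and your outline never touches the complement. This matters for the conclusion as well: the germ of the contact structure along $\Sigma$ does not determine the Legendrian isotopy class of $\partial\Sigma$; one must also control the tight contact structure on the complementary handlebody relative to its boundary data, which is precisely the kind of argument the paper carries out for $\Bout$. Third, and most concretely, the dividing set of the twist band cannot distinguish a $Z$ half twist from an $S$ half twist --- both contribute identically to the relative Thurston--Bennequin count --- so no amount of bypass rotation on $T$ will ``restore a $Z$ or $S$ shape''; what is actually needed is a classification of Legendrian $2$-braids in a tight ball up to destabilization, which the paper imports as Theorem~\ref{thm:braids}. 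Without an analogue of that input, the final step of your argument does not close.
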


\begin{figure}[ht]
  \relabelbox \small {
  \centerline{\epsfbox{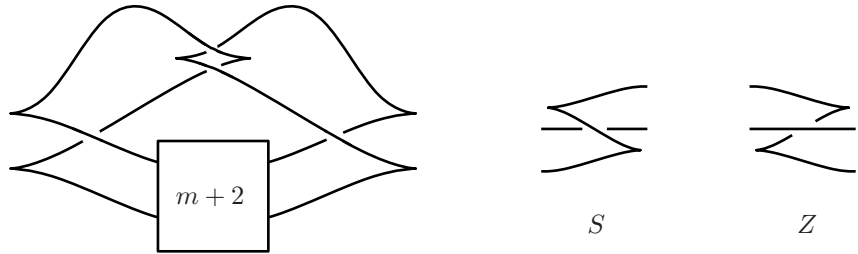}}}
  \relabel{m}{{$m+2$}}
  \relabel{s}{$S$}
  \relabel{z}{$Z$}
  \endrelabelbox
\caption{A front projection for  $K_{m}$ for $m\leq -2,$ and half twists of type $S$ and $Z.$}
\label{fig:legtwistn}
\end{figure}

\begin{theorem}\label{thm:frontpos}
For $m\geq 0$, any Legendrian representative of $K=K_{m}$  with maximal $\tb$ is Legendrian isotopic to the Legendrian knot with front projection depicted in Figure~\ref{fig:mpos}, where the rectangle contains $m$ positive half twists each of which is of type $X.$
\end{theorem}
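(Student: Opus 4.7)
The plan is to mimic the convex-surface strategy used for Theorem~\ref{thm:frontneg}, which will be carried out in Section~\ref{sec:norm}. Because $K_m$ is Legendrian simple for $m \geq 0$ (Theorem~\ref{thm:mtnrange}), the normalization procedure in the positive case has an essentially unique outcome, and the argument is considerably shorter than its negative counterpart.

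First, let $\LL$ be a Legendrian representative of $K_m$ with $\tb(\LL) = \maxtb(K_m)$. Every twist knot has genus one, so I would take a genus-one Seifert surface $\Sigma$ for $\LL$ and make it convex with Legendrian boundary. By Proposition~\ref{prop:tbbound}, $|\Gamma_\Sigma \cap \partial\Sigma| = 2|\maxtb(K_m)|$, which equals $2(m+1)$ for $m$ even and $2(m+5)$ for $m$ odd. Maximality of $\tb$ forces $\Gamma_\Sigma$ to avoid any boundary-parallel arc cutting off a half-disk with no further dividing curves inside, since such an arc would yield, via Legendrian realization, a bypass whose attachment destabilizes $\LL$.

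Next, I would classify the admissible configurations of $\Gamma_\Sigma$ on the once-punctured torus. Tightness forbids separating closed dividing curves, and any non-separating closed curves can be paired off against boundary-parallel arcs in a controlled way by Giroux flexibility. Cutting $\Sigma$ along $\Gamma_\Sigma$ and reconstructing $\LL$ in a standard tubular neighborhood (with corners rounded via Lemma~\ref{edgerounding}) then translates each of the $m$ positive half-twists of Figure~\ref{fig:twistn} into an $X$-type half-twist of the front, matching Figure~\ref{fig:mpos}. Residual indeterminacy is absorbed by bypass rotation (Lemma~\ref{moveleft}), which produces the unique $\rot=0$ representative when $m$ is even, and the pair of $\rot=\pm 1$ representatives when $m$ is odd.

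The main obstacle is the combinatorial enumeration of dividing patterns on $\Sigma$, which grows more intricate as $m$ increases. One must check that every non-standard pattern either destabilizes $\LL$ (and is therefore forbidden by maximality of $\tb$) or can be pushed into the target normal form by a sequence of bypass rotations and edge-roundings. Assembling these local moves into a global Legendrian isotopy to the front of Figure~\ref{fig:mpos} is the technical heart of Section~\ref{sec:norm}; fortunately, the much harder negative case (Theorem~\ref{thm:frontneg}) proven there subsumes the combinatorics needed here.
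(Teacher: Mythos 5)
Your proposal does not follow the paper's route, and more importantly it has two genuine gaps. The first is circularity: you justify the ``essentially unique outcome'' of your normalization by citing the Legendrian simplicity of $K_m$ for $m\geq 0$ from Theorem~\ref{thm:mtnrange}, but Theorem~\ref{thm:mtnrange} is itself deduced from Theorem~\ref{thm:frontpos} together with Theorem~\ref{thm:destab}, so it cannot be used as an input here.

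The second gap is that the core of the argument is missing. You propose to make a genus-one Seifert surface $\Sigma$ convex and classify the configurations of $\Gamma_\Sigma$, but the dividing set on a convex Seifert surface does not by itself determine the Legendrian isotopy class of its boundary: one must also pin down the tight contact structure on the complement of $\Sigma$ (a genus-two handlebody) and produce an ambient contactomorphism to a standard model, which by Eliashberg's theorem is what gives a Legendrian isotopy. The phrases ``paired off \dots by Giroux flexibility,'' ``reconstructing $\LL$ in a standard tubular neighborhood,'' and ``residual indeterminacy is absorbed by bypass rotation'' do not perform this step, and the enumeration of dividing sets on a punctured torus with $2(m+1)$ or $2(m+5)$ boundary intersections and no destabilizing boundary-parallel arcs is left entirely open. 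Your closing claim that the negative case ``subsumes the combinatorics needed here'' is also unfounded on your route: the paper proves Theorems~\ref{thm:frontneg}, \ref{thm:frontpos}, and~\ref{thm:destab} simultaneously not via Seifert surfaces but by splitting $S^3$ along a convex $2$-sphere meeting $K_m$ in four points into a clasp ball $\Bout$ and a twist ball $\Bin$, normalizing the dividing set on the punctured sphere (Theorem~\ref{thm:sphere}, via the auxiliary unknot of Lemma~\ref{lem:sphere}), standardizing the tangle in $\Bout$ (Theorem~\ref{thm:bout}), and then invoking the classification of Legendrian $2$-braids (Proposition~\ref{prop:leg2braid}), which is precisely where the $m\geq 0$ case yields a stack of $X$ blocks as in Figure~\ref{fig:mpos}. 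None of these ingredients, or substitutes for them, appear in your outline.
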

\begin{figure}[ht]
  \relabelbox \small {
  \centerline{\epsfbox{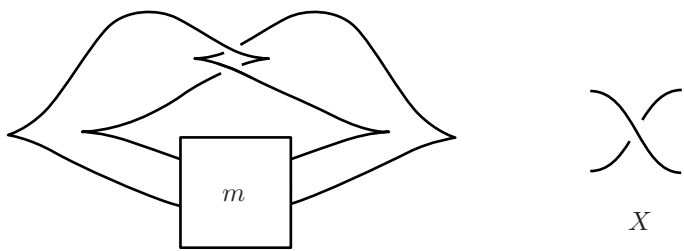}}}
  \adjustrelabel <.125in,0in> {m}{$m$}
  \relabel{x}{$X$}
  \endrelabelbox
\caption{A front projection for  $K_{m}$ for $m\geq 0,$ and crossings of type $X.$}
\label{fig:mpos}
\end{figure}

The techniques developed for the proof of the above theorems also give the following result.

\begin{theorem}\label{thm:destab}
Let $\K$ be a Legendrian representative of the twist knot $K_m.$
Whenever $\tb (\K) <\maxtb (K_m)$ then $\K$ destabilizes.
\end{theorem}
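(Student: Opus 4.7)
The plan is to extract Theorem~\ref{thm:destab} as an immediate consequence of the convex surface normal-form analysis that Section~\ref{sec:norm} develops for Theorems~\ref{thm:frontneg} and~\ref{thm:frontpos}. Let $\K$ be a Legendrian representative of $K=K_m$. Fix a genus-one Seifert surface $\Sigma$ with $\partial\Sigma=\K$ and perturb it to be convex with Legendrian boundary. The strategy has three steps: (1) argue that $\tb(\K)<\maxtb(K_m)$ forces $\Gamma_\Sigma$ to contain a boundary-parallel arc; (2) convert such an arc into a bypass for $\K$; and (3) use the bypass to exhibit $\K$ as a stabilization.

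Step~(3) is standard. A boundary-parallel component $\gamma\subset\Sigma$ of $\Gamma_\Sigma$ cuts off a bigon $B\subset\Sigma$ meeting $\K$ in a single subarc $\alpha$. The Legendrian realization principle places a bypass half-disk $D\subset B$ for $\K$ along $\alpha$, and Honda's bypass-attachment theorem (the one recalled before Figure~\ref{fig:bypassattach}) applied to a standard convex annular neighborhood of $\K$ produces a Legendrian $\K'$ in the topological type of $K_m$ with $\tb(\K')=\tb(\K)+1$ and $\rot(\K')=\rot(\K)\pm 1$, such that $\K$ is a stabilization of $\K'$. Hence $\K$ destabilizes.

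Step~(1) is the heart of the argument and amounts to running the normal-form reduction of Section~\ref{sec:norm}. Tightness of $\xist$ rules out any null-homotopic closed components of $\Gamma_\Sigma$ via Giroux's criterion. The bypass-rotation Lemma~\ref{moveleft} together with the edge-rounding Lemma~\ref{edgerounding} (applied to $\Sigma$ against auxiliary convex disks cutting the complementary handlebody) allows one to trade any remaining essential closed components on the once-punctured torus $\Sigma$ for additional boundary-intersecting arcs. Once $\Gamma_\Sigma$ consists only of arcs, an outermost-arc count shows that any dividing set whose number of arcs exceeds the minimum must contain a boundary-parallel arc. The minimal configurations are precisely the dividing sets of the fronts in Figures~\ref{fig:legtwistn} and~\ref{fig:mpos}, which realize $\maxtb(K_m)$.

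The main obstacle is step~(1): showing that every convex Seifert surface for $\K$ can be normalized, via bypass rotations and edge-roundings, so that the dividing set either matches one of the minimal normal-form configurations of Theorems~\ref{thm:frontneg} and~\ref{thm:frontpos} or contains a boundary-parallel arc. This dichotomy is the technically involved part of Section~\ref{sec:norm} and requires a careful case analysis of the possible dividing sets on a once-punctured torus, keeping track of how bypass moves interact with the pieces of the knot complement cut off by $\Sigma$. Once that case analysis is carried out, Theorem~\ref{thm:destab} is simply its contrapositive and needs no additional argument.
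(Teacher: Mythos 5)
Your overall logical skeleton---normalize, conclude that $\K$ either destabilizes along the way or lands in a normal form, check the normal forms have maximal $\tb$ via Proposition~\ref{prop:tbbound}, and read off Theorem~\ref{thm:destab} as the contrapositive---is exactly how the paper derives this theorem. But the geometric engine you propose is not the one in Section~\ref{sec:norm}, and as sketched it has a real gap. The paper does not work with a convex genus-one Seifert surface at all: it cuts $S^3$ along a $2$-sphere $S$ meeting $K_m$ in four points (the rational-tangle decomposition of Figure~\ref{fig:twistndec}), normalizes the dividing set of the four-punctured sphere $P$ (Theorem~\ref{thm:sphere}, which itself requires first producing a $\tb=-1$ Legendrian unknot $\LL$ in the class of $B$ via Lemma~\ref{lem:sphere}), and then analyzes the tangles in $\Bout$ and $\Bin$ separately, extracting destabilizations from compressing disks of the complementary handlebodies. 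So you are attributing to Section~\ref{sec:norm} a case analysis of dividing sets on a once-punctured torus that the paper never performs.

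The concrete gap is in your Step~(1). On a once-punctured torus the dichotomy ``either the dividing set is one of finitely many minimal configurations or it contains a boundary-parallel arc'' is false: for any $k$ one can take $2k$ parallel copies of an essential arc, giving a dividing set with arbitrarily many boundary points (hence arbitrarily negative $\tb$) and no boundary-parallel component and no closed component, so your ``outermost-arc count'' does not produce a bypass on $\Sigma$ itself. To rule out or reduce such configurations you must find bypasses coming from the \emph{complement} of $\Sigma$, a genus-two handlebody, and that is precisely the kind of delicate work (locating compressing disks, Legendrian-realizing their boundaries, computing their $\tb$, excluding disallowed bypasses by tightness, applying bypass rotation) that occupies Lemma~\ref{lem:sphere} and Theorems~\ref{thm:sphere} and~\ref{thm:bout} in the paper's decomposition. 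Your sketch replaces all of this with the sentence that bypass rotation and edge-rounding ``allow one to trade'' essential components for boundary-intersecting arcs, which is not an argument; essential closed curves and parallel essential arcs cannot be removed without exhibiting an actual bypass, and you have not said where one comes from. Likewise, the identification of the surviving ``minimal configurations'' with the fronts of Figures~\ref{fig:legtwistn} and~\ref{fig:mpos} is asserted rather than proved; in the paper this identification is the content of Theorems~\ref{thm:frontneg} and~\ref{thm:frontpos}, obtained through the braid normal form of Proposition~\ref{prop:leg2braid}, and the fact that those fronts do not destabilize is supplied by the ruling argument of Proposition~\ref{prop:tbbound}. Steps~(2) and~(3) of your proposal (boundary-parallel dividing arc $\Rightarrow$ bypass $\Rightarrow$ destabilization) are standard and fine, but they are not where the difficulty lies.
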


We will see below that Theorems~\ref{thm:frontpos} and~\ref{thm:destab} establish Items~\eqref{item1a} and~\eqref{item1b} in Theorem~\ref{thm:mtnrange}, the classification of Legendrian $K_m$ for $m \geq -2$.
To classify Legendrian $K_m$ for $m \leq -3$, we need to distinguish between the distinct representatives of $K_m$ with maximal Thurston--Bennequin number and understand when they become the same under stabilization.

We begin by considering $K_m$ when $m\leq -4$ is even. According to Theorem~\ref{thm:frontneg}, we can represent each of the maximal-tb representatives of $K_{-2n}$ by a
length $2n-2$ word in the letters $Z^+,Z^-,S^+,S^-$, where these letters
represent the Legendrian half-twists shown in
Figure~\ref{fig:zs-example} and letters
must alternate in sign. Given such a word $w$, let
$z^+(w),z^-(w),s^+(w),s^-(w)$ denote the number of $Z^+,Z^-,S^+,S^-$
in $w$, respectively, and note that $z^+(w)+s^+(w)=z^-(w)+s^-(w)=n-1.$
\begin{figure}[ht]
  \relabelbox \small {
  \centerline{\epsfbox{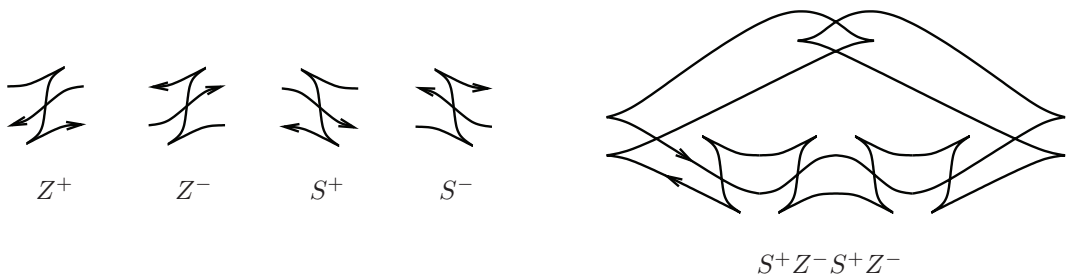}}}
  \relabel{a}{{$Z^+$}}
  \relabel{b}{$Z^-$}
  \relabel{c}{$S^+$}
  \relabel{d}{{$S^-$}}
  \relabel{e}{$S^+Z^-S^+Z^-$}
  \endrelabelbox
\caption{Denoting a maximal-tb twist knot by a word in $Z$'s and $S$'s.}
\label{fig:zs-example}
\end{figure}

\begin{lemma}\label{lem:legisotopic1}
Two words of length $2n$ with the same $z^+,z^-,s^+,s^-$ correspond to
Legendrian-isotopic knots.
\end{lemma}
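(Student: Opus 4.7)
The plan is to reduce the claim to a single local Legendrian isotopy swapping two same-signed letters separated by one letter of opposite sign. Each word arising from Theorem~\ref{thm:frontneg} alternates in sign and contains $n-1$ letters of each sign. Two such words $w$ and $w'$ with the same counts $(z^+,z^-,s^+,s^-)$ therefore agree in sign at every position and differ only in how the letters $Z$ and $S$ are distributed among the positive slots and among the negative slots. Any such rearrangement is a composition of transpositions of the letters at positions $i$ and $i+2$ (which share the same sign), since adjacent same-parity transpositions generate the symmetric group on the positive slots and, independently, on the negative slots.

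It therefore suffices to exhibit a Legendrian isotopy realizing the local move
\[
Z^\epsilon \, X^{-\epsilon} \, S^\epsilon \;\longleftrightarrow\; S^\epsilon \, X^{-\epsilon} \, Z^\epsilon
\]
for each $\epsilon \in \{+,-\}$ and each $X \in \{Z,S\}$, applied to the three-half-twist sub-tangle while keeping the remainder of the front fixed. My approach is to carry this out directly on front diagrams via a short sequence of Legendrian Reidemeister II and III moves: the opposite-signed crossing in the middle position provides the necessary slack to slide the outer $Z^\epsilon$ past the central twist so that it emerges in $S^\epsilon$ form on the opposite side, while the former $S^\epsilon$ is simultaneously reshaped into $Z^\epsilon$. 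No cusp is created or destroyed globally, so the Thurston--Bennequin and rotation numbers are automatically preserved, consistent with the fact that both knots must realize $(\tb,\rot)=(1,0)$.

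The main obstacle is the careful bookkeeping in the four cases $(\epsilon,X)$, each of which requires writing down the explicit Reidemeister sequence on the front. This is a routine but not wholly trivial picture verification; I expect that two of the cases reduce to the other two by the obvious up--down symmetry, leaving essentially two genuine pictures to check. As a conceptual alternative, the same local move can be derived from a bypass attachment on a convex annular neighborhood of the three-twist sub-tangle, using the dividing-curve and bypass-rotation technology reviewed in Section~\ref{sec:background} and developed in Section~\ref{sec:norm}; this would connect the lemma to the convex-surface framework used elsewhere in the paper. In either setup, once the local move is established, the lemma follows by iterating it along a sequence of adjacent same-parity transpositions converting $w$ into $w'$.
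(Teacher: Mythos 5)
Your local move is the right one and coincides with the paper's: the paper's Figure~\ref{fig:ssz} establishes exactly the tangle isotopies $S^\pm S^\mp Z^\pm \leftrightarrow Z^\pm S^\mp S^\pm$ and $Z^\pm Z^\mp S^\pm \leftrightarrow S^\pm Z^\mp Z^\pm$, i.e., your $Z^\epsilon X^{-\epsilon} S^\epsilon \leftrightarrow S^\epsilon X^{-\epsilon} Z^\epsilon$, and iterating adjacent same-sign transpositions is precisely how the paper proceeds. So that half of your argument is sound (modulo actually drawing the Reidemeister sequence, which is the content of Figure~\ref{fig:ssz}).

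However, there is a genuine gap at the very first step. You assert that two alternating words with the same counts $(z^+,z^-,s^+,s^-)$ ``agree in sign at every position.'' This is false: an alternating word of even length with equally many $+$ and $-$ letters can begin with either sign, so for instance $Z^+S^-$ and $S^-Z^+$ have identical counts but opposite signs in every slot, and no sequence of same-sign adjacent transpositions relates them. Your argument therefore only proves the lemma for pairs of words that begin with the same sign. The paper closes exactly this case with a second, global isotopy (Figure~\ref{fig:z-move}) showing that $Z^\pm w$ and $wZ^\pm$ (and likewise $S^\pm w$ and $wS^\pm$) are Legendrian isotopic --- i.e., a letter can be slid out of one end of the twist box, around the clasp, and back into the other end, which cyclically permutes the word and in particular flips its starting sign. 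You need this (or some substitute for it) to finish; it is not a consequence of the local three-letter move, since that move never changes the sign pattern of the word.
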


\begin{figure}
\centering

\includegraphics
{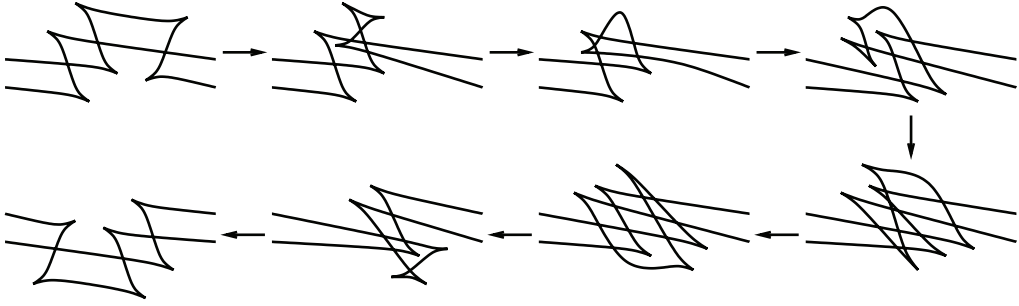}

\caption{
Legendrian isotopy between $SSZ$ and $ZSS$.
}
\label{fig:ssz}
\end{figure}

\begin{proof}
A local computation (Figure~\ref{fig:ssz}) shows that $S^\pm S^\mp
Z^\pm$ and $Z^\pm S^\mp S^\pm$ are Legendrian isotopic as Legendrian
tangles. (Alternately, the fact that these are Legendrian isotopic
follows from the Legendrian satellite construction \cite{MR2131643}.)
Similarly, $Z^\pm Z^\mp S^\pm$ and $S^\pm Z^\mp Z^\pm$ are Legendrian
isotopic. It follows that we can transpose consecutive $+$ letters in
a word while preserving Legendrian-isotopy class, and the same for
consecutive $-$ letters.
Thus two words with the same $z^+,z^-,s^+,s^-$ that begin with the
same sign correspond to Legendrian-isotopic knots.

\begin{figure}
\centering

\includegraphics
{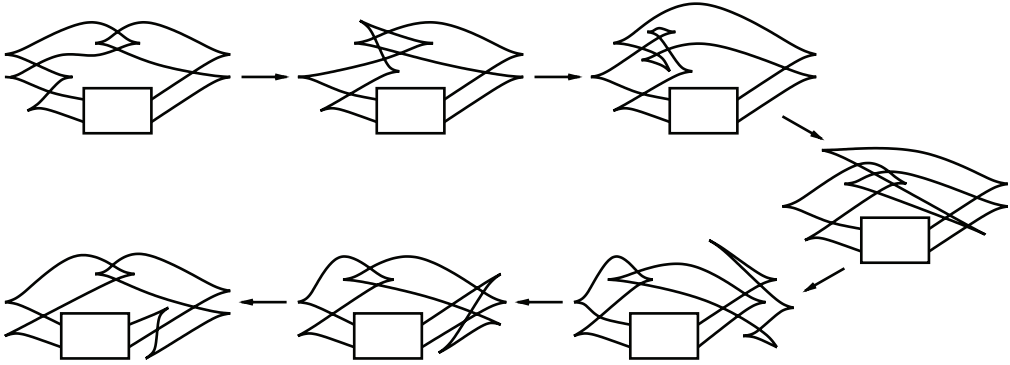}

\caption{
Moving a $Z$ from the beginning of a word to the end.
}
\label{fig:z-move}
\end{figure}

To complete the proof, it suffices to show that $Z^\pm w$ and $w
Z^\pm$ correspond to Legendrian isotopic knots for a length $2n-3$
word $w$, as do $S^\pm w$ and $w S^\pm$. For $Z^\pm w = w Z^\pm$, see
Figure~\ref{fig:z-move}; for $S^\pm w = w S^\pm$, reflect
Figure~\ref{fig:z-move} in the vertical axis.
\end{proof}

By Lemma~\ref{lem:legisotopic1}, we can define Legendrian isotopy
classes $\K_{z^+,z^-}$ for $0\leq z^\pm\leq n-1$ corresponding to words
with the specified $z^+,z^-$. We then have the following result.

\begin{lemma}\label{lem:legisotopic2}
The Legendrian isotopy classes $\K_{z^+,z^-}$ and $\K_{n-1-z^+,n-1-z^-}$
are the same.
\end{lemma}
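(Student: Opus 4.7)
The statement is that the involution $(z^+,z^-) \leftrightarrow (n-1-z^+, n-1-z^-)$ preserves Legendrian isotopy class. Since $z^+ + s^+ = z^- + s^- = n-1$, this is equivalent to saying that simultaneously interchanging \emph{every} $Z^\pm$ with $S^\pm$ (preserving signs) in a representing word yields a Legendrian isotopic knot. My plan is to realize this letter-swap as a global Legendrian isotopy coming from a topological symmetry of the twist knot.

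\textbf{Step 1 (choose a canonical form).} By Lemma~\ref{lem:legisotopic1}, one may freely permute the letters of a fixed sign. I would begin by placing the word for $\mathcal{K}_{z^+,z^-}$ in a convenient canonical form, for instance one in which the $Z^+$'s are grouped consecutively among the $+$ letters and the $Z^-$'s are grouped consecutively among the $-$ letters. This makes it easy to match any two reorderings at the end via Lemma~\ref{lem:legisotopic1}.

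\textbf{Step 2 (flip the twist region).} The topological input is that $K_{-2n}$ is a two-bridge knot and hence strongly invertible: there is a $\pi$-rotation of $S^3$ whose axis bisects both the clasp and the twist region of the standard diagram and which carries the knot to itself while reversing the order of the half-twists and exchanging the ``upper" and ``lower" strands of the twist box. On the front projection in the normal form of Theorem~\ref{thm:frontneg}, exchanging upper with lower at each half-twist converts $Z^\pm$ to $S^\pm$ (and vice versa), preserving the $\pm$ label. I would realize this topological $\pi$-rotation by an explicit sequence of Legendrian Reidemeister II and III moves on the front, in the same spirit as the local moves of Figures~\ref{fig:ssz} and~\ref{fig:z-move}; essentially one drags each half-twist out of the twist box, around the clasp, and back in at the opposite end, showing that the net effect is precisely the $Z^\pm \leftrightarrow S^\pm$ swap.

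\textbf{Step 3 (bookkeeping).} Once the Legendrian flip is established, the resulting word has $z^+$ and $s^+$ interchanged, and $z^-$ and $s^-$ interchanged. Applying Lemma~\ref{lem:legisotopic1} once more puts it into the canonical form of $\mathcal{K}_{n-1-z^+, n-1-z^-}$, completing the identification.

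\textbf{Main obstacle.} The genuine technical work is Step~2: realizing the strong inversion of the knot as a Legendrian isotopy. Topologically the flip is obvious, but one must produce it via moves that never destabilize the front and that track each crossing's type and sign carefully. I expect the cleanest way is to exhibit two ``halves" of the flip: a move that drags a single $Z^\pm$ from one end of the twist region, through the clasp, and reinserts it at the other end as an $S^\pm$, and to iterate. Verifying that this iteration indeed yields the full $Z \leftrightarrow S$ swap — rather than some weaker identification — will be the delicate part, and likely requires a careful picture of the clasp region comparable in detail to Figures~\ref{fig:ssz} and~\ref{fig:z-move}.
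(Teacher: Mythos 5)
Your overall strategy --- exploit a $180^\circ$ symmetry of the twist knot that interchanges every $Z^\pm$ with $S^\pm$ --- is the right idea, and it is essentially what the paper does. But your Step~2, which you correctly identify as the entire content of the lemma, is not only left unexecuted; the implementation you sketch would fail. You propose to realize the flip by iterating a local move that ``drags a single $Z^\pm$ from one end of the twist region, through the clasp, and reinserts it at the other end as an $S^\pm$.'' No such single-letter move can exist: the known drag-around-the-clasp isotopy (Figure~\ref{fig:z-move}) takes $Z^\pm w$ to $wZ^\pm$, i.e.\ it \emph{preserves} the letter type. If instead one had $Z^\pm w \sim w S^\pm$, then combining it with $Z^\pm w \sim w Z^\pm$ and Lemma~\ref{lem:legisotopic1} would give $\K_{z^+,z^-} \sim \K_{z^+-1,z^-}$ for every $z^+\geq 1$, collapsing all the classes with fixed $z^-$ --- contradicting Proposition~\ref{prop:dist}\eqref{class:1} and the count $\lceil n^2/2\rceil$ in Theorem~\ref{thm:mtnrange}. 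So the $Z\leftrightarrow S$ swap is genuinely a global operation on all letters simultaneously and cannot be built up one half-twist at a time; any ``iterate the local move'' scheme proves too much.

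The paper's proof sidesteps all of this with one observation: the map $(x,y,z)\mapsto(-x,-y,z)$ is a contactomorphism of $(\R^3,\xist)$ that preserves Legendrian isotopy classes (in the $xy$-projection it is just a planar rotation by $180^\circ$), and in the front projection it sends each tangle $Z^\pm$ to $S^\pm$ and vice versa, hence carries $\K_{z^+,z^-}$ to $\K_{n-1-z^+,n-1-z^-}$ on the nose (the reversal of the word order it also causes is harmless by Lemma~\ref{lem:legisotopic1}). Note also that the axis matters: the rotation is about the $z$-axis, not the in-plane strong-inversion axis you describe; the latter corresponds to $(x,y,z)\mapsto(x,-y,-z)$, which negates rotation numbers and is not obviously isotopic to the identity through contactomorphisms. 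If you want to salvage your argument, replace Step~2 with the explicit ambient contactomorphism rather than a sequence of front moves.
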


\begin{proof}
The map $(x,y,z) \mapsto (-x,-y,z)$ is a contactomorphism of
$\mathbb{R}^3$ that preserves Legendrian isotopy classes, as can
easily be seen in the $xy$ projection, where it is a rotation by
$180^\circ$. In the $xz$ projection, this map sends tangles $Z^\pm$ to
$S^\pm$ and $S^\pm$ to $Z^\pm$ and thus sends $\K_{z^+,z^-}$ to
$\K_{n-1-z^+,n-1-z^-}$.
\end{proof}

We are now in a position to classify the $\K_{z^+,z^-}$'s and all
Legendrian knots obtained from the $\K_{z^+,z^-}$'s by
stabilization. The key ingredients are a result of Ozsv\'ath and
Stipsicz \cite{OzsvathStipsicz2008Pre} on distinct transverse representatives
of twist knots, and the ruling invariant discussed in Section~\ref{subsec:ruling}. Let $St^+,St^-$ denote the operations on Legendrian
isotopy classes given by positive and negative stabilization.

\begin{proposition}\label{prop:dist}
For $0\leq z^\pm,{z^\pm}'\leq n-1$, we have:
\begin{enumerate}
\item \label{class:1}
$\K_{z^+,z^-}$ is Legendrian isotopic to $\K_{{z^+}',{z^-}'}$ if and only if $({z^+}',{z^-}')
= (z^+,z^-)$ or $({z^+}',{z^-}')
= (n-1-z^+,n-1-z^-)$;
\item \label{class:2}
$\K_{z^+,z^-}$ and $\K_{{z^+}',{z^-}'}$ are Legendrian isotopic after some
positive number of positive stabilizations if and only if ${z^-}'=z^-$
or ${z^-}'=n-1-z^-$, and in these cases the knots are isotopic after one
positive stabilization;
\item \label{class:3}
$\K_{z^+,z^-}$ and $\K_{{z^+}',{z^-}'}$ are Legendrian isotopic after some
positive number of negative stabilizations if and only if ${z^+}'=z^+$
or ${z^+}'=n-1-z^+$, and in these cases the knots are isotopic after one
negative stabilization;
\item \label{class:4}
$St^+ St^- \K_{z^+,z^-}$ is Legendrian isotopic to $St^+ St^- \K_{{z^+}',{z^-}'}$ for all $z^\pm,{z^\pm}'$.
\end{enumerate}
\end{proposition}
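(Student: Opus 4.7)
The plan is to separate the ``if'' and ``only if'' halves of each claim. The ``if'' directions will be proved by exhibiting explicit Legendrian isotopies and stabilizations at the level of front projections, while the ``only if'' directions will rely on the ruling invariants of Section~\ref{subsec:ruling} together with the Ozsv\'ath--Stipsicz transverse invariant~\cite{OzsvathStipsicz2008Pre}.

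For the ``if'' directions, Lemma~\ref{lem:legisotopic2} already gives $\K_{z^+,z^-}=\K_{n-1-z^+,n-1-z^-}$, accounting for the symmetries appearing in all four parts. The remaining content is the claim that a single positive stabilization allows one to exchange a $Z^+$ half-twist for an $S^+$ half-twist, and symmetrically that a single negative stabilization exchanges $Z^-$ for $S^-$. I would prove this by a local front-projection computation: after a positive stabilization one has a free positive zig-zag which can be slid into a neighboring half-twist, where a short sequence of Legendrian Reidemeister~II moves converts $Z^+$ into $S^+$ while leaving the zig-zag and the rest of the diagram intact. Combined with Lemma~\ref{lem:legisotopic1}, this shows that $St^+\K_{z^+,z^-}$ depends only on $z^-$ modulo the involution $z^-\mapsto n-1-z^-$, giving the ``if'' half of (2) after a single stabilization; (3) is analogous, and applying both moves gives (4).

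For the ``only if'' direction of (2) and (3), I would invoke the Fuchs--Tabachnikov correspondence between transverse isotopy classes and Legendrian classes modulo one sign of stabilization, together with the Ozsv\'ath--Stipsicz result~\cite{OzsvathStipsicz2008Pre} producing at least $\lceil n/2\rceil$ distinct transverse representatives of $K_{-2n}$ at maximal self-linking. The ``if'' half of (2) shows that the set $\{\,St^+\K_{z^+,z^-}\,\}$ contains at most $\lceil n/2\rceil$ Legendrian isotopy classes, parameterized by $z^-$ modulo the involution. Since the transverse invariant separates at least $\lceil n/2\rceil$ classes after arbitrary stabilizations in the appropriate sign (and by applying an orientation-reversing contactomorphism one obtains the analogous statement for the opposite sign), the two counts must coincide and the parameterization must be a bijection, yielding (2); (3) follows by the same argument with signs reversed.

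For the ``only if'' direction of (1), all the $\K_{z^+,z^-}$ share the classical invariants $(\tb,\rot)=(1,0)$ and indeed share the same class after a single stabilization of either sign, so a finer invariant is needed to separate the $\lceil n^2/2\rceil$ orbits of the involution of Lemma~\ref{lem:legisotopic2}. I would use the ungraded ruling polynomial guaranteed by Proposition~\ref{prop:ChP}. The main step is to compute this polynomial for $\K_{z^+,z^-}$ by analyzing how a ruling passes through each elementary half-twist tangle $Z^\pm$, $S^\pm$; transfer-matrix bookkeeping then produces an explicit polynomial whose coefficients depend on $(z^+,z^-)$ in a way manifestly invariant under $(z^+,z^-)\mapsto(n-1-z^+,n-1-z^-)$ but sensitive enough to distinguish distinct orbits. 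I expect this combinatorial ruling computation --- in particular verifying that the coefficients genuinely separate all $\lceil n^2/2\rceil$ orbits of the involution --- to be the main obstacle of the proof.
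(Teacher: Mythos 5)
Your treatment of items (2), (3), and (4) follows essentially the same route as the paper: the local move trading $Z^{\pm}$ for $S^{\pm}$ after one stabilization of the matching sign gives the ``if'' halves and the one-stabilization statement, the Ozsv\'ath--Stipsicz classes give the lower bound $\lceil n/2\rceil$ for the ``only if'' halves, and (4) follows from (2) and (3) because stabilizations commute. (The paper quotes the Ozsv\'ath--Stipsicz result directly in its Legendrian form --- the diagonal classes $\K_{z^+,z^+}=E(2z^++1,2n-2z^+-1)$ stay distinct under arbitrarily many negative stabilizations --- rather than passing through the transverse--Legendrian correspondence, but that is a cosmetic difference.)

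The genuine gap is in your plan for the ``only if'' half of (1). Ruling invariants of $\K_{z^+,z^-}$ cannot separate all $\lceil n^2/2\rceil$ orbits of the involution, because they depend only on the sum $z^++z^-$: every front $\K_{z^+,z^-}$ has exactly one $\rho$-graded ruling unless $\rho$ divides $2(z^++z^-+1-n)$, in which case it has two, so the entire package of $\rho$-graded ruling counts records precisely $|z^++z^-+1-n|$ and nothing finer. (The ungraded count is $2$ for every $\K_{z^+,z^-}$, so an ungraded ruling polynomial sees nothing at all.) Concretely, for $n=3$ the knots $\K_{0,2}$ and $\K_{1,1}$ lie in different orbits of $(z^+,z^-)\mapsto(n-1-z^+,n-1-z^-)$ and are not Legendrian isotopic, yet they have identical $\rho$-graded ruling counts for every $\rho$; no transfer-matrix computation will separate them. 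The missing idea is to combine the ruling data with the stable statements you have already proved: if $\K_{z^+,z^-}=\K_{{z^+}',{z^-}'}$, then items (2) and (3) force ${z^+}'\in\{z^+,n-1-z^+\}$ and ${z^-}'\in\{z^-,n-1-z^-\}$, leaving four candidate pairs, and the ruling equality $|z^++z^-+1-n|=|{z^+}'+{z^-}'+1-n|$ (via Proposition~\ref{prop:ChP}) is exactly what is needed to eliminate the two ``mixed'' candidates such as $(z^+,n-1-z^-)$. This is how the paper closes the argument, and without some such input beyond rulings your step for (1) fails.
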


\begin{figure}
\centering

\includegraphics
{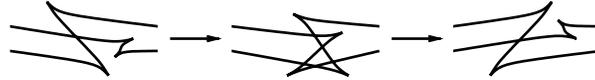}

\caption{
$Z$ and $S$ tangles are isotopic after an appropriate stabilization of each.}
\label{fig:szstab}
\end{figure}

\begin{proof}
We first establish (\ref{class:3}).
It is well-known  \cite{EpsteinFuchsMeyer01} that $Z^-$ and $S^-$ become Legendrian
isotopic after one negative stabilization; see
Figure~\ref{fig:szstab}. Consequently,
for $z^-<n-1$, $St^- \K_{z^+,z^-} = St^- \K_{z^+,z^-+1}$, and thus $St^-
\K_{z^+,z^-} = St^- \K_{z^+,{z^-}'} = St^- \K_{n-1-z^+,{z^-}''}$ for any
  $z^+,z^-,{z^-}',{z^-}''$, where the last equality follows from
  Lemma~\ref{lem:legisotopic2}. On the other hand, by
  \cite{OzsvathStipsicz2008Pre}, if $0\leq z^+,{z^+}'\leq n/2$ with $z^+ \neq
  {z^+}'$, then $\K_{z^+,z^+}$ and $\K_{{z^+}',{z^+}'}$ represent
  distinct Legendrian isotopy classes even after any number of
  negative stabilizations. (Note that $\K_{z^+,z^+}$ can be
  represented by the word $(Z^-Z^+)^{z^+} (S^-S^+)^{n-1-z^+}$, which
  corresponds to the Legendrian knot $E(2z^++1,2n-2z^+-1)$ in the
  notation of \cite{OzsvathStipsicz2008Pre}.)

 Item (\ref{class:3}) follows, and (\ref{class:2}) is proved
 similarly.
Item (\ref{class:4}) is an immediate consequence of (\ref{class:2})
and (\ref{class:3}), since stabilizations commute: $St^+ St^-
\K_{z^+,z^-} = St^+ St^- \K_{z^+,{z^-}'} = St^- St^+ \K_{z^+,{z^-}'} =
St^+ St^- \K_{{z^+}',{z^-}'}$.

\begin{figure}
\centering

\includegraphics
{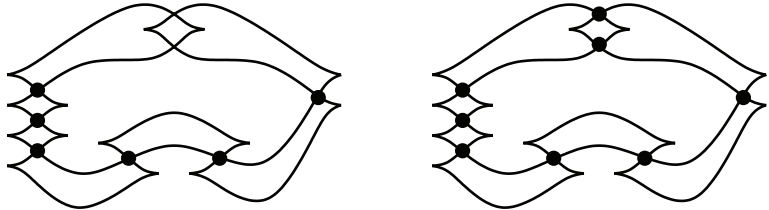}

\caption{
All possible $\rho$-graded rulings of the front for $\K_{z^+,z^-}$
(pictured here, $ZZSZ$ with either orientation). Dots indicate
switches. The left ruling is
$\rho$-graded for any $\rho$; all switches have Maslov degree $0$. The
two new switches in the right ruling have Maslov degree $2(z^++z^-+1-n)$
(top) and $-2(z^++z^-+1-n)$ (bottom), and thus the right ruling is
$\rho$-graded if and only if $\rho$ divides $2(z^++z^-+1-n)$.
}
\label{fig:ruling}
\end{figure}

It remains to establish (\ref{class:1}). The ``if'' part follows from
Lemma~\ref{lem:legisotopic2}. For ``only if'', we use
graded ruling invariants; one could also use Legendrian contact
homology \cite{Chekanov02}. The Maslov degrees of the two
uppermost (clasp) crossings in a
representative front diagram for $\K_{z^+,z^-}$ are readily seen to be
$\pm 2(z^++z^-+1-n)$. It follows from this that there is exactly one
$\rho$-graded (normal) ruling of the front unless $\rho \, | \, 2(z^++z^-+1-n)$,
in which case there are two $\rho$-graded rulings; See
Figure~\ref{fig:ruling}.

Now suppose that $\K_{z^+,z^-} = \K_{{z^+}',{z^-}'}$.
By Proposition~\ref{prop:ChP}, we must have
$|z^++z^-+1-n| = |{z^+}'+{z^-}'+1-n|$. On the other hand, by (\ref{class:2}) and
(\ref{class:3}), ${z^+}' \in \{z^+,n-1-z^+\}$ and ${z^-}' \in
\{z^-,n-1-z^-\}$. Combined, these equations imply that $({z^+}',{z^-}') =
(z^+,z^-)$ or $(n-1-z^+,n-1-z^-)$, as desired.
\end{proof}

We next consider $K_m$ when $m\leq -3$ is odd, say $m=-2n-1$; the argument here is similar to, but simpler than, the case of $m \leq -4$ even. According to Theorem~\ref{thm:frontneg}, we can represent each of the maximal-tb representatives of $K_{-2n-1}$ by a
length $2n-1$ word in the letters $Z^+,Z^-,S^+,S^-$, where these letters
represent the Legendrian half-twists shown in
Figure~\ref{fig:zs-example} and letters
must alternate in sign and begin and end with the same sign. The
Legendrian isotopy at the end of the proof of
Lemma~\ref{lem:legisotopic1} shows that we may assume that the word
begins (and ends) with a letter with a plus sign. As above, given such
a word $w$, let
$z^+(w),z^-(w),s^+(w),s^-(w)$ denote the number of $Z^+,Z^-,S^+,S^-$
in $w$, respectively, and note that $z^+(w)+s^+(w)=z^-(w)+s^-(w)+1=n.$

Essentially the same proof as for Lemma~\ref{lem:legisotopic1} gives the following result.
\begin{lemma}\label{lem:legisotopic1.1}
Two words of length $2n-1$ with the same $z^+,z^-,s^+,s^-$ correspond to
Legendrian-isotopic knots.\hfill\qed
\end{lemma}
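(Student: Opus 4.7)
My plan is to mirror the proof of Lemma~\ref{lem:legisotopic1} essentially verbatim, using only the \emph{transposition} half of that argument, since the normalization that the word starts and ends with a plus letter has already been carried out using the cycling move (Figure~\ref{fig:z-move}) in the discussion preceding the lemma.

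More concretely, the local Legendrian isotopies
$$S^\pm S^\mp Z^\pm \;\simeq\; Z^\pm S^\mp S^\pm \qquad\text{and}\qquad Z^\pm Z^\mp S^\pm \;\simeq\; S^\pm Z^\mp Z^\pm,$$
established in the proof of Lemma~\ref{lem:legisotopic1} (via Figure~\ref{fig:ssz} and its mirror), are strictly local. They remain valid inside any longer tangle in which the three-letter pattern appears, and in particular inside a length $2n-1$ word $w$ subject to the sign-alternation and boundary conditions of the odd case. They show that any two consecutive same-sign letters of $w$ (necessarily separated by exactly one opposite-sign letter) may be interchanged without changing the Legendrian isotopy class of the resulting knot.

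Next I would note that, under our normalization, $w$ has the form $P_1 M_1 P_2 M_2 \cdots M_{n-1} P_n$ with $P_i \in \{Z^+,S^+\}$ and $M_j \in \{Z^-,S^-\}$, so the positions of plus and minus letters are rigid and only the content at each slot is free to vary. The transposition moves above act as the adjacent transpositions on the subword $(P_1,\ldots,P_n)$ and, independently, on the subword $(M_1,\ldots,M_{n-1})$. Since adjacent transpositions generate the symmetric groups $S_n$ and $S_{n-1}$, we may permute the plus letters and the minus letters arbitrarily while staying in the Legendrian isotopy class. In particular, any $w$ can be brought to the canonical form
$$(Z^+)^{z^+}(S^+)^{s^+}\text{ interleaved with }(Z^-)^{z^-}(S^-)^{s^-},$$
which depends only on the quadruple $(z^+,z^-,s^+,s^-)$, completing the proof.

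The only step requiring a moment's thought is to confirm that the assumption ``begins with a plus letter'' has already been fully justified, so that we need not invoke the cycling move inside this lemma; this is indeed what is stated in the paragraph introducing the lemma. There is no serious obstacle here, and in fact this case is genuinely easier than Lemma~\ref{lem:legisotopic1} because the cycling step is handled outside the lemma.
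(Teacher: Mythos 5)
Your proposal is correct and matches the paper's (unwritten) argument: the paper simply asserts that ``essentially the same proof as for Lemma~\ref{lem:legisotopic1}'' works, and your write-up is exactly that proof restricted to the transposition moves, with the correct observation that the cycling move is not needed here since the sign pattern of a length $2n-1$ word is already pinned down.
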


By Lemma~\ref{lem:legisotopic1.1}, we can define Legendrian isotopy
classes $\K_{z^+,z^-}$ for $0\leq z^\pm\leq n$ corresponding to words
with the specified $z^+,z^-$. We then have the following result.
\begin{lemma}\label{lem:legisotopic2.1}
The Legendrian isotopy classes $\K_{z^+,z^-}$ and $\K_{n-z^+,n-1-z^-}$
are the same. If $z^+<n$ and $z^-\geq 1,$ then the Legendrian isotopy classes of $\K_{z^+,z^-}$ and $\K_{z^++1, z^--1}$ are the same.
\end{lemma}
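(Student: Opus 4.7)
The first assertion follows from the same contactomorphism used in Lemma~\ref{lem:legisotopic2}. The map $(x,y,z)\mapsto(-x,-y,z)$ of $(\mathbb{R}^3,\xist)$ reflects the front projection across the $z$-axis; applied to a length-$(2n-1)$ word $w$ representing $\K_{z^+,z^-}$, it produces the reversed and $Z\leftrightarrow S$-swapped word. The original word begins and ends with a plus letter, so its reverse does too, and the new $Z^+$- and $Z^-$-counts are $s^+(w)=n-z^+$ and $s^-(w)=n-1-z^-$, proving $\K_{z^+,z^-}=\K_{n-z^+,n-1-z^-}$.

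For the second assertion, the hypotheses $z^+<n$ and $z^-\geq 1$ guarantee that the word contains at least one $S^+$ and at least one $Z^-$. By Lemma~\ref{lem:legisotopic1.1} I may freely permute the plus letters among themselves and the minus letters among themselves without changing the Legendrian class, so I would first rearrange to obtain a representative whose terminal two letters are $Z^-S^+$. The plan is then to produce a Legendrian isotopy, localized in a neighborhood of the right clasp/cusps of the front diagram, that replaces this terminal $Z^-S^+$ by $S^-Z^+$. The resulting word has counts $(z^++1,z^--1)$, which is the desired identification.

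The main obstacle is exhibiting this local isotopy. I would do so via an explicit front-diagram manipulation near the right clasp, using the fact that the box contains an odd number of negative half-twists so that the two strands leave the box with their vertical order exchanged before entering the clasp region; this particular configuration is what allows $Z^-S^+$ and $S^-Z^+$ at the end of the word to be isotopic. The same move is not available for interior subwords $\cdots Z^-S^+\cdots$, nor for the even case of Lemma~\ref{lem:legisotopic2}, which is consistent with the fact that the corresponding count identification fails there (as evidenced by the enumeration $\lceil n^2/2\rceil$ in Theorem~\ref{thm:mtnrange}\eqref{item3}). In other words, the parity of the twist box is exactly what distinguishes the odd case and makes this extra identification available.
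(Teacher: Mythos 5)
Your proof of the first statement is correct and coincides with the paper's: apply the contactomorphism $(x,y,z)\mapsto(-x,-y,z)$ from Lemma~\ref{lem:legisotopic2}, and the count $s^+(w)=n-z^+$, $s^-(w)=n-1-z^-$ gives $\K_{z^+,z^-}=\K_{n-z^+,n-1-z^-}$.

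For the second statement there is a genuine gap: the entire content of the claim is the isotopy trading a terminal $Z^-S^+$ for $S^-Z^+$, and you never exhibit it --- you only announce a plan to find ``an explicit front-diagram manipulation near the right clasp.'' I am moreover skeptical that the move you want exists as a \emph{local} move at the clasp; the paper does not use one. Instead it assembles the required isotopy from moves you already have. Write the word as $S^+Z^-w'$ (your hypotheses $z^+<n$, $z^-\geq 1$ plus Lemma~\ref{lem:legisotopic1.1} allow this), so that $Z^+S^-w'$ represents $\K_{z^++1,z^--1}$. The isotopy of Figure~\ref{fig:z-move} carries the leading tangle all the way around the knot to the final position, and --- this is the crucial parity point --- because the word has odd length, the transported letter returns with the \emph{opposite} sign: $Z^+S^-w'\sim S^-w'Z^-$, and the reflected isotopy gives $S^+Z^-w'\sim Z^-w'S^-$. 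Since $S^-w'Z^-$ and $Z^-w'S^-$ have identical letter counts, Lemma~\ref{lem:legisotopic1.1} identifies them, and chaining these three isotopies yields $\K_{z^+,z^-}=\K_{z^++1,z^--1}$. Your parity heuristic is gesturing at the right phenomenon --- for even-length words the around-the-knot move preserves the sign of the transported letter, which is exactly why no analogous identification appears in Lemma~\ref{lem:legisotopic2} and why the count $\lceil n^2/2\rceil$ in Theorem~\ref{thm:mtnrange}\eqref{item3} survives --- but as written the decisive step of your argument is asserted rather than proved, and the localization you propose is not the mechanism that actually works.
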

\begin{proof}
The first statement follows as in the proof of Lemma~\ref{lem:legisotopic2}. For the second statement, let $S^+Z^-w'$ be a word representing $\K_{z^+,z^-}$, where $w'$ is some word of length $2n-3$; then $Z^+S^-w'$ represents $\K_{z^++1,z^--1}$. The isotopy in Figure~\ref{fig:z-move} shows that $Z^+S^-w'$ and $S^-w'Z^-$ correspond to Legendrian isotopic knots, while the reflection of this isotopy in the $z$ axis shows that $S^+Z^-w'$ and $Z^-w'S^-$ correspond to Legendrian isotopic knots. But $S^-w'Z^-$ and $Z^-w'S^-$ are also Legendrian isotopic by Lemma~\ref{lem:legisotopic1.1}.
\end{proof}
It follows from Theorem~\ref{thm:frontneg} and Lemma~\ref{lem:legisotopic2.1} that every maximal-tb knot has a representative of the form $\K_{n,z^-}$ for some $0 \leq z^-\leq n-1$; we denote this representative by $\K_{z^-}$.

\begin{proposition}\label{prop:dist2}
For $0\leq z^-,{z^-}'\leq n-1$, we have:
\begin{enumerate}
\item \label{class:1.1}
$\K_{z^-} $ is Legendrian isotopic to $\K_{{z^-}'}$ if and only if ${z^-}'= z^-$;
\item \label{class:2.1}
$St^\pm \K_{z^-}$ is Legendrian isotopic to  $St^\pm \K_{{z^-}'}$ for all $z^-,{z^-}'.$
\end{enumerate}
\end{proposition}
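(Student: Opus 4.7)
The plan is to follow the structure of the proof of Proposition~\ref{prop:dist}, noting that with only one parameter $z^-$ to track, the argument is substantially simpler.

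For part (\ref{class:2.1}), I would handle $St^-$ first. Since $Z^-$ and $S^-$ become Legendrian isotopic after one negative stabilization (Figure~\ref{fig:szstab}), every $Z^-$ in the word for $\K_{z^-}=\K_{n,z^-}$ can be converted to an $S^-$ after $St^-$, so $St^-\K_{z^-}$ is independent of $z^-$. The $St^+$ case is slightly subtler, because our normalization $z^+=n$ leaves no $S^+$ in the word to which the analogous equivalence $Z^+\sim_{St^+}S^+$ can be applied. To produce one I would use the shift in Lemma~\ref{lem:legisotopic2.1}: for $z^-\le n-2$, $\K_{n,z^-}=\K_{n-1,z^-+1}$, and after $St^+$ the single $S^+$ in $\K_{n-1,z^-+1}$ can be converted back to a $Z^+$, giving
$$St^+\K_{z^-}=St^+\K_{n-1,z^-+1}=St^+\K_{n,z^-+1}=St^+\K_{z^-+1}.$$
Iterating collapses all $St^+\K_{z^-}$ into a single Legendrian isotopy class.

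For part (\ref{class:1.1}) I would invoke the Chekanov--Pushkar graded ruling invariant (Proposition~\ref{prop:ChP}), in direct analogy with Proposition~\ref{prop:dist}(\ref{class:1}). Inspection of the front of $\K_{z^-}$ produced by Theorem~\ref{thm:frontneg} should reveal, as in Figure~\ref{fig:ruling}, exactly two candidate normal rulings: one whose switches all have Maslov degree $0$ (hence $\rho$-graded for every $\rho$), and a second obtained by switching additionally at the two clasp crossings, which have opposite Maslov degrees $\pm d(z^-)$ for some integer $d(z^-)$. The number of $\rho$-graded rulings is then $2$ or $1$ according as $\rho$ does or does not divide $d(z^-)$, so $|d(z^-)|$ is a Legendrian invariant. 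The main obstacle will be the Maslov-number bookkeeping needed to verify that $z^-\mapsto|d(z^-)|$ is injective on $\{0,1,\ldots,n-1\}$: by analogy with the formula $\pm 2(z^++z^-+1-n)$ from the even case, I expect $d(z^-)$ to be an affine linear function of $z^-$ with nonzero slope, which should follow from the observation that swapping a single $S^-$ for $Z^-$ shifts the Maslov number of one of the clasp strands by $\pm 2$. Once this injectivity is in hand, Proposition~\ref{prop:ChP} forces ${z^-}'=z^-$, completing the proof.
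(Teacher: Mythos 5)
Your proposal is correct and follows essentially the same route as the paper: part~(\ref{class:2.1}) is proved there by the same $Z^\pm\sim_{St^\pm}S^\pm$ collapsing argument as in Proposition~\ref{prop:dist}(\ref{class:2})--(\ref{class:3}) (your extra step via Lemma~\ref{lem:legisotopic2.1} to handle the normalization $z^+=n$ is a valid way to organize it), and part~(\ref{class:1.1}) is proved exactly as you predict, with the two clasp crossings having Maslov degrees $\pm(2z^-+1)$, so that the count of $\rho$-graded rulings via Proposition~\ref{prop:ChP} separates all $z^-\in\{0,\ldots,n-1\}$. The affine expression $2z^-+1$ confirms your expected form of $d(z^-)$, and its injectivity on the relevant range is immediate.
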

\begin{proof}
The proof of \eqref{class:2.1} is exactly the same as the proof of \eqref{class:2} and \eqref{class:3} in Proposition~\ref{prop:dist}.
For Item~\eqref{class:1.1} we again use $\rho$-graded rulings. As in the proof of Proposition~\ref{prop:dist}, all of the crossings in $\K_{z^-}$ have Maslov degree $0$, except for the top two crossings, which have grading $\pm (2z^-+1).$ So $K_{z^-}$ has one $\rho$-graded ruling unless $2z^-+1$ is divisible by $\rho$, in which case it has two. By Proposition~\ref{prop:ChP}, $\K_{z^-}$ and $\K_{{z^-}'}$ cannot be Legendrian isotopic unless $z^-={z^-}'.$
\end{proof}

We are now ready for the proof of our main theorem.
\begin{proof}[Proof of Theorem~\ref{thm:mtnrange}]
We begin with Items~\eqref{item1a} and~\eqref{item1b} of the theorem concerning the knot type $K_m$ with $m \geq 0$ (the case $m = -2$ is covered by \eqref{item3}).
Theorem~\ref{thm:frontpos} says that there is a unique Legendrian representative for $K_m$ with maximal Thurston--Bennequin number if orientations are ignored. For $m$ odd, when we take orientations into account, there are two maximal $\tb$ representatives $\K_+$ and $\K_-$ of $K_m$ and they are distinguished by their rotation numbers, $\rot(\K_\pm)=\pm 1$. Using the isotopy described in the proof of Lemma~\ref{lem:legisotopic2}, one easily verifies that $St^-(\K_+)$ is Legendrian isotopic to $St^+(\K_-).$ Since Theorem~\ref{thm:destab} says that all other representatives destabilize to $\K_\pm$, we conclude that $K_m$ is Legendrian simple if $m\geq 1$ is odd. When $m\geq 0$ is even, one may again use the isotopy described in the proof of Lemma~\ref{lem:legisotopic2} to check that the two oriented Legendrian representatives of $K_m$ coming from Theorem~\ref{thm:frontpos} are Legendrian isotopic. Thus there is a unique representative of $K_m$ with maximal Thurston--Bennequin n
 umber and all other Legendrian representatives are stabilizations of this one. This completes the proof for $m\geq 0$.

Next consider the case when $m$ is negative and even.
The maximal Thurston--Bennequin number representatives of $K_{-2n}$ are of the form $\K_{z^+,z^-}$ for $z^+,z^-\in \{0,\ldots, n-1\},$ by Theorem~\ref{thm:frontneg}. (This is even true after considering possible orientations, since the orientation reverse of $\K_{z^+,z^-}$ is $\K_{z^-,z^+}$.)
 Moreover, Proposition~\ref{prop:dist} says $\K_{z^+,z^-} = \K_{{z^+}',{z^-}'}$ if and only if $({z^+}',{z^-}')
= (z^+,z^-)$ or $({z^+}',{z^-}')
= (n-1-z^+,n-1-z^-).$ Since there are $n$ choices for $z^+$ and $z^-$ it is clear that there are $\lceil\frac{n^2}2\rceil$ distinct representatives. Similarly we see that after strictly positive of strictly negative stabilizations there are $\lceil\frac{n}2\rceil$ distinct representatives and after both type of stabilizations there is just one representative. This completes the proof of Item~\eqref{item3} of the theorem.

Similarly, when $m=-2n-1$ is negative and odd, Item~\eqref{class:1.1} in Proposition~\ref{prop:dist2} implies there are at least $n$ Legendrian representatives with maximal $\tb$, while Lemma~\ref{lem:legisotopic2.1} and the discussion around it implies there are at most $n$. Moreover Theorem~\ref{thm:destab} says all Legendrian representatives with non-maximal $\tb$ destabilize to one of these. Thus Item~\eqref{item2} of the theorem is completed by Item~\eqref{class:2.1} in Proposition~\ref{prop:dist2}.
\end{proof}

\begin{proof}[Proof of Theorem~\ref{main:transverse}]
We use the fact, due in this setting to \cite{EpsteinFuchsMeyer01}, that the negative stable classification of Legendrian knots is equivalent to the classification of transverse knots. More precisely, two transverse knots are transversely isotopic if and only if any of their Legendrian approximations are Legendrian isotopic after some number of negative stabilizations. Then Theorem~\ref{main:transverse} is a direct corollary of Theorem~\ref{thm:mtnrange}.
\end{proof}

\section{Normalizing the Front Projection}\label{sec:norm}
In this section we prove Theorems~\ref{thm:frontneg}, \ref{thm:frontpos}, and~\ref{thm:destab}, thus completing the proof of our main Theorem~\ref{thm:mtnrange}. To this end, notice that since $K_m$ is a rational knot, we can find an embedded $2$-sphere $S$ in $S^3 (=\R^3\cup\{\infty\})$ intersecting $K_m$ in four points and dividing $K_m$ into unknotted pieces. More precisely, we can choose $S$ as shown in Figure~\ref{fig:twistndec}, intersecting $K$ in four points labeled $1, 2, 3$ and $4$ in the figure and separating $S^3$ into two balls $\Bin$ and $\Bout$, such that: $K_m$ intersects $\Bout$ as a (vertical) 2-braid with two negative half-twists,
which we denote $K_{\textrm{out}}=K\cap \Bout$, and $K_m$ intersects $\Bin$ as a (horizontal) 2-braid with $m$ positive half-twists,
which we denote $K_{\textrm{in}}=K\cap\Bin.$
\begin{figure}[ht]
  \relabelbox \small {
  \centerline{\epsfbox{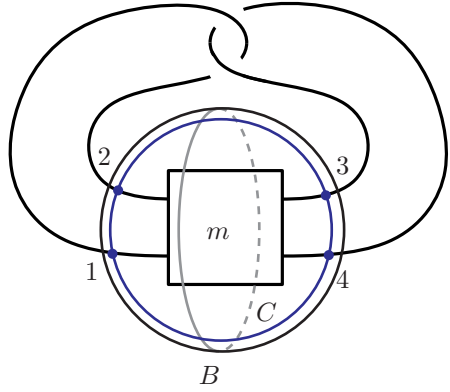}}}
  \relabel{1}{{$1$}}
  \relabel{2}{$2$}
  \relabel{3}{$3$}
  \relabel{4}{{$4$}}
  \relabel{b}{$B$}
  \relabel{c}{{$C$}}
  \relabel{m}{$m$}
  \endrelabelbox
\caption{Model of the knot $K_m$, intersecting a $2$-sphere $S$ in four points. The closed curve $C$ on $S$ intersects $K_m$ in four points and the closed curve $B$ on $S$ separates the points $1,2$ from $3,4$. 
}
\label{fig:twistndec}
\end{figure}

We begin by normalizing the dividing curves on $S$. After this we study the contact structures on the 3-balls $\Bin$ and $\Bout.$

\subsection{Normalizing the sphere $S$}

Throughout this section, we fix a standard model for $K_m$ as shown in Figure~\ref{fig:twistndec}, and we assume $m \neq -1$.
A Legendrian realization $\K$ of $K_m$ defines an isotopy $\psi\colon S^3\to S^3$ mapping $K_m$ to $\K$ and $S$ to $\psi(S).$ We can change the isotopy $\psi$ such that $\psi(S)$ is a convex surface, and a standard neighborhood $N$ of $\K$ with meridional ruling curves intersects $\psi(S)$ in four Legendrian unknots.
Let $P$ be the sphere with four punctures $P=S\setminus \nu(K_m)$.
The position of the pullback $\Gamma_P$ of the dividing curves on $\psi(P)$ depends on the chosen convex representation of $\psi(S)$, and thus on the isotopy $\psi$, but we can always choose $\psi$ so that $\Gamma_P$ is normalized as follows.

\begin{theorem}\label{thm:sphere}
Let $m \neq -1$, and fix $K_m$ along with a neighborhood $\nu(K_m)$ and the surfaces $S$ and $P$ as above.
For any Legendrian realization $\K$  of $K_m$, there exists an isotopy $\psi\colon S^3\to S^3$ such that $S$ (and thus $P$) is convex, $\psi(\nu(K_m))=N$ is a standard contact neighborhood of $\K$, and the pullback $\Gamma_P \subset P$ of the dividing curves on $\psi(P)$ is as shown in Figure \ref{fig:divcurve}.
\begin{figure}[ht]
  \relabelbox \small {
  \centerline{\epsfbox{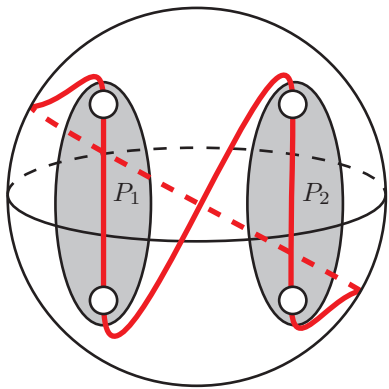}}}
   \relabel{1}{$P_1$}
  \relabel{2}{$P_2$}
  \endrelabelbox
\caption{The dividing curves on $P$ can always be arranged to be as shown (assuming $m\not=-1$).}
\label{fig:divcurve}
\end{figure}
\end{theorem}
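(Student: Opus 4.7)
The plan is to follow a standard convex surface normalization in three stages, much as in Etnyre--Honda style classifications, but adapted to the four-punctured sphere $P$.

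First, arrange the sphere to be convex with prescribed Legendrian boundary. After a contact isotopy taking $\K$ to a knot matching our standard model of $K_m$ up to smooth isotopy of $S^3$, choose a standard neighborhood $N$ of $\K$ whose boundary torus is convex with meridional dividing slope. Isotope $S$ so that $S \cap \partial N$ consists of four meridional ruling curves of $\partial N$ (in particular Legendrian), and then apply Giroux's flexibility theorem to make $S$ convex with this boundary. This is routine; the only point to check is that the four points $S \cap \K$ can be matched up with any four ruling circles on $\partial N$, which holds because all meridional rulings are Legendrian isotopic.

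Second, constrain the topology of $\Gamma_P$. Each of the four boundary circles of $P$ intersects the dividing set of $\partial N$ transversely in two points, so $|\partial P \cap \Gamma_P| = 8$. Hence $\Gamma_P$ decomposes as four arcs together with (possibly) some closed curves. Since $(S^3,\xist)$ is tight, Giroux's criterion rules out any closed component of $\Gamma_P$ that bounds a disk in $P$ (i.e., is contractible on $P$). The remaining closed components must therefore be essential on $P$, and each is either isotopic to a boundary circle (encircling a single puncture) or isotopic to a curve separating the four punctures into two pairs.

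Third, simplify $\Gamma_P$ to the normal form of Figure~\ref{fig:divcurve} via bypass moves. The source of bypasses is the pair of complementary $3$-balls $\Bin$ and $\Bout$, each containing an unknotted $2$-braid tangle (with $m$ and $-2$ half-twists respectively). For each non-normal topological type of $\Gamma_P$, one locates a convex meridional disk in $\Bin$ or $\Bout$ bounded by a Legendrian curve on $P$ (realized via LeRP), whose dividing set must contain a boundary-parallel arc by an Euler characteristic count combined with tightness in the ball; such an arc supplies a bypass for $P$. Attaching the bypass via Honda's theorem, together with Lemma~\ref{moveleft} to rotate bypass arcs into position and Lemma~\ref{edgerounding} to build convex disks across $P$ when needed, strictly reduces a complexity measure on $\Gamma_P$ (for instance, the number of closed components plus the geometric intersection with a fixed reference arc). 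Iterating this procedure terminates at the configuration of Figure~\ref{fig:divcurve}.

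The main obstacle is executing this third step: the combinatorial enumeration of dividing configurations on a four-punctured sphere with eight boundary endpoints is nontrivial, and for each non-normal configuration one must explicitly exhibit a bypass disk inside one of $\Bin$ or $\Bout$. The availability of such bypasses depends on the tight contact structures on the two tangle complements, so the argument will naturally split along the structure of $\Kin$ (and hence along the sign and parity of $m$), mirroring the subsequent split between Theorems~\ref{thm:frontneg} and \ref{thm:frontpos}. Ensuring compatibility of the bypasses with the ambient tightness --- i.e., that after simplification the resulting dividing set is still realized by some tight structure on each ball --- is the delicate point, handled via a careful bookkeeping of the dividing sets on compressing disks for the genus-$2$ handlebodies $\Bin \setminus \nu(\Kin)$ and $\Bout \setminus \nu(\Kout)$.
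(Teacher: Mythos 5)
Your outline identifies the right general toolkit (convex surfaces, bypasses coming from compressing disks in $\Bin$ and $\Bout$, bypass rotation), but it is missing the one idea that makes the paper's proof work, and the step you defer as ``the main obstacle'' is precisely where the content lies. The paper does not attack the combinatorics of $\Gamma_P$ on the four-punctured sphere directly. Instead it first proves Lemma~\ref{lem:sphere}: the separating curve $B$ of Figure~\ref{fig:twistndec} can be realized, in the complement of $\K$, by a Legendrian unknot $\LL$ with $\tb(\LL)=-1$. The sphere $S$ is then \emph{rebuilt} as the union of two meridian disks of the solid torus $Q=\overline{S^3\setminus N_\LL}$ and an annulus in $\partial N_\LL$; because $\tb(\LL)=-1$, the dividing set of $\partial N_\LL$ has exactly two components, so $\Gamma_P$ is forced a priori to consist of arcs only (no closed components, no extra parallel strands crossing $B$), and the entire residual ambiguity collapses to a single integer: the slope $s$ of the two dividing arcs on the annulus $A$. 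The theorem then follows from a finite, explicit case analysis normalizing $s$ to the value giving Figure~\ref{fig:divcurve}. Your proposal has no mechanism for bounding the complexity of $\Gamma_P$ in advance; without Lemma~\ref{lem:sphere} (or a substitute), your ``complexity measure'' must handle arbitrarily many essential closed components and arbitrarily many arcs crossing $B$, and you have not exhibited a bypass that strictly decreases it in each such configuration.

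Moreover, the existence-plus-reduction step you assert is exactly where the paper has to work hardest, and the difficulties are not of the kind your sketch anticipates. The bypasses come from the disks $\Dout$ and $\Din$ bounded by the curves $\gamma$ and $\gamma'$, and whether a given bypass helps depends delicately on where its attaching arc meets $P_1'\cup P_2'$: in the paper's Claim~\ref{claim} some subcases are disallowed by tightness, some require bypass rotation (Lemma~\ref{moveleft}), and in the exceptional case $n=3$ the bypass does \emph{not} reduce the complexity but only changes the slope $s$, which forces the separate argument of Remark~\ref{remark:twice} and the covering argument of Figure~\ref{fig:graph} using both $\gamma$ and $\gamma'$. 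None of this is a monotone descent on a single complexity function. Finally, your expectation that the argument ``splits along the sign and parity of $m$'' is not borne out: the normalization of $S$ is essentially uniform in $m$ (small $|m|$ is handled by quoting the classifications of unknots, torus knots, and the figure eight), and the dependence on $m$ enters only later, in the braid analysis of $\K\cap\Bin$ behind Theorems~\ref{thm:frontneg} and~\ref{thm:frontpos}.
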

Before proving Theorem~\ref{thm:sphere}, we establish the following lemma.
\begin{lemma}\label{lem:sphere}
If $m\not=-1$ and $\K$ is a Legendrian realization of $K_m,$ then there is a Legendrian unknot $\LL$ with $\tb(\LL)=-1$ that, in the complement of $\K,$ is topologically isotopic to the curve $B$ in Figure~\ref{fig:twistndec}.
\end{lemma}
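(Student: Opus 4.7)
The plan is to construct $\LL$ via convex surface theory applied to the decomposing sphere $S$, together with the Legendrian Realization Principle. First, I would fix an ambient isotopy $\psi \colon S^3 \to S^3$ with $\psi(K_m) = \K$, choose a standard contact neighborhood $N$ of $\K$ with meridional ruling, and perturb $\psi$ so that $\psi(S)$ is convex and meets $\partial N$ transversely along four meridional ruling curves. The four-punctured surface $P = \psi(S)\setminus\operatorname{int}(N)$ is then convex; since each boundary component of $P$ is a meridian of $\K$, it crosses the dividing set $\Gamma_P$ in exactly two points, so $\Gamma_P$ has eight endpoints on $\partial P$ and consists of four properly embedded arcs (together possibly with some closed curves).

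Next, I would aim to Legendrian-realize a simple closed curve on $\psi(S)$ that is smoothly isotopic in the complement of $\K$ to $\psi(B)$ and meets the dividing set $\Gamma_{\psi(S)}$ in exactly two points. The standard convex-surface formula will then yield $\tb(\LL) = -\tfrac12|\LL\cap\Gamma_{\psi(S)}| = -1$. Concretely, after normalizing $\Gamma_P$ to eliminate closed dividing curves (via the trivial bypasses they produce) and to absorb boundary-parallel arcs by modifying the convex representative of $\psi(S)$, one is left with a short list of possible dividing configurations on the four-punctured sphere. The hypothesis $m\neq -1$ rules out the configuration that would force $K_m$ to be an unknot, and in each of the remaining configurations one exhibits a simple closed curve on $\psi(S)$ separating the meridian pair through $\{1,2\}$ from the pair through $\{3,4\}$ and meeting $\Gamma_{\psi(S)}$ in exactly two points. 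Such a curve is nonisolating in the sense of Honda, so the Legendrian Realization Principle produces the desired Legendrian representative $\LL$ after a $C^0$-small perturbation of $\psi(S)$.

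To finish, I would verify the two topological properties. Since $\LL$ lies on $\psi(S)$ and bounds a disk there, it is a topological unknot in $S^3$. Since $\LL$ is smoothly isotopic on $\psi(S)$ to $\psi(B)$ (the isotopy missing the four meridional disks of $N$), the ambient inverse isotopy $\psi^{-1}$ carries $\LL$ to a curve isotopic to $B$ in the complement of $K_m$, equivalently $\LL$ is isotopic to $B$ in the complement of $\K$.

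The main obstacle will be the case analysis of the dividing set $\Gamma_P$. A priori the configuration of dividing arcs on a convex four-punctured sphere can have many distinct topological types, and proving that a suitable separating curve exists in every case (and that the bypasses needed to put $\Gamma_P$ into a manageable normal form are available in the complement of $\K$) requires a careful combinatorial argument in the spirit of Honda's normal-form analysis for convex surfaces. The non-triviality of $K_m$ for $m\neq -1$ is used crucially at this step to exclude the degenerate configurations in which the desired curve fails to exist.
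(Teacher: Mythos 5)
Your overall goal (realize a curve isotopic to $B$ on a convex sphere meeting the dividing set in two points, so that $\tb=-1$ by the twisting formula) is the right target, but the proposal has a genuine gap at exactly the step you flag as ``the main obstacle,'' and the preliminary normalizations you propose do not close it. After discarding closed dividing curves and boundary-parallel arcs, the four dividing arcs on the four-punctured sphere do \emph{not} fall into a short list of configurations: they carry an unbounded integer amount of twisting (the slope $s$ on the annulus $A$, in the notation of the paper), and the geometric intersection number of the relevant isotopy class of $B$ with $\Gamma_P$ can be arbitrarily large. Reducing this twisting is the entire technical content of the argument: one must locate bypasses on the compressing disks $\Dout\subset\Bout$ and $\Din\subset\Bin$, rule out the attachments that would disconnect the dividing set of the sphere (tightness), rotate the others using Lemma~\ref{moveleft}, and check that the allowed attachments actually decrease the complexity for every pair $(n,s)$ --- this is the case analysis of Claim~\ref{claim}, Figure~\ref{fig:cases}, and Figure~\ref{fig:graph}, none of which is supplied or replaced by your outline. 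Two further points: (i) you cannot ``absorb boundary-parallel arcs by modifying the convex representative,'' since such an arc is a bypass that destabilizes $\K$, which need not be available (the lemma must hold for every Legendrian realization, stabilized or not), and in any case its presence is not an artifact of the choice of representative; (ii) a curve on $\psi(S)$ separating the punctures $\{1,2\}$ from $\{3,4\}$ is not automatically isotopic to $B$ in the complement of $\K$ --- there are infinitely many such isotopy classes on the four-punctured sphere --- so even once a two-intersection curve is found, you must control \emph{which} class it lies in. The role of $m\neq -1$ is also misplaced: in the paper it enters through the slope inequality $|s-nm|\ge 2$ (valid for $|m|>2$) together with the known classifications for $|m|\le 2$, not by excluding a degenerate dividing-curve configuration.

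It is also worth noting that your proposed logical order inverts the paper's. The paper first proves this lemma by taking a maximal-$\tb$ Legendrian representative $\LL$ of the class of $B$ in the complement of $\K$, building the decomposing sphere out of $\partial N_\LL$ together with two meridian disks, and deriving a contradiction from $\tb(\LL)<-1$ via the bypass analysis above; only afterwards (Theorem~\ref{thm:sphere}) is the dividing set on the sphere normalized, \emph{using} the unknot $\LL$ produced by the lemma. Your plan normalizes the sphere first and then reads off $\LL$, which amounts to proving the harder Theorem~\ref{thm:sphere} before the lemma; that is not impossible in principle, but it makes the missing combinatorial step strictly harder rather than easier.
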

\begin{proof}
There exists some Legendrian knot $\LL$ in the topological class of $B$, disjoint from $K_m$. Suppose that $\LL$ has been chosen so that $\tb(\LL)$ is maximal for Legendrians in this class, say $\tb(\LL)=-n$ for some $n>0$. We will show that the assumption that $n>1$ leads to a contradiction.

Let $N_\LL$ be a standard neighborhood of $\LL$ disjoint from $\K$. Set $Q=\overline{(S^3- N_\LL)}.$ Clearly $Q$ is a solid torus $S^1\times D^2$ with convex boundary, and the boundary has two dividing curves of slope $-n.$ We can assume the ruling curves are meridional and then choose two disks $D_1$ and $D_2$ in $Q$ bounding these ruling curves as shown in Figure~\ref{fig:constructS}.
\begin{figure}[ht]
  \relabelbox \small {
  \centerline{\epsfbox{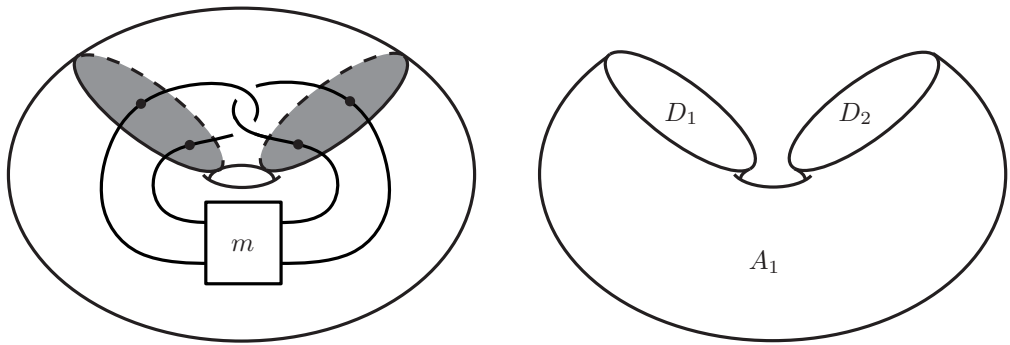}}}
  \relabel{m}{{$m$}}
  \relabel{1}{$D_1$}
  \relabel{2}{$D_2$}
  \relabel{a}{{$A_1$}}
  \endrelabelbox
\caption{The torus $\partial Q=\partial N_L$ on the left with the disk $D_1$ and $D_2$ shaded. On the right is the annulus $A_1$ and the disks $D_1$ and $D_2$ whose union can be taken to be $S.$}
\label{fig:constructS}
\end{figure}
Specifically, $\partial Q \setminus (\partial D_1\cup \partial D_2)$ consists of two annuli $A_1$ and $A_2$ such that $A_1\cup D_1\cup D_2$ (after rounding corners) represents the sphere $S$. We can isotop each $D_i$ so that a standard neighborhood $N$ of $\K$ intersects $D_i$ in two disks with Legendrian boundary (which are meridional ruling curves on $\partial N$) and $D_i$ is convex. Let $P_i=D_i\setminus N.$ Then $P_i$ is a pair of pants with three boundary components, which we label $c_{i,1},c_{i,2},c_{i,3}$ such that $c_{i,3}$ is the boundary component contained in $\partial Q$ and $c_{i,1},c_{i,2}$ are ruling curves in $\partial N$.

Notice that $\Gamma_{P_i}=\Gamma_P \cap P_i$ intersects each of $c_{i,1}$ and $c_{i,2}$ exactly twice and intersects $c_{i,3}$ exactly $2n$ times. If $\Gamma_{D_i}$ has more than two boundary parallel dividing curves then $\Gamma_{P_i}$ will have at least one boundary parallel dividing curve along $c_{i,3}$ and thus we can use this to construct a bypass to destabilize $\LL$ in the complement of $\K.$  As this is a contradiction, we know the dividing curves on $P_i$ can be described in the following way; see Figure~\ref{fig:P} for an illustration. There is a coordinate system on $D_i$ so that:
\begin{itemize}
\item
$D_i$ is the unit disk in the $xy$ plane;
\item
$\K\cap D_i$ is $\{(0,\pm 1/2)\}$;
\item
$C\cap D_i$ is the line segment $x=0$, where $C$ is as shown in Figure~\ref{fig:twistndec};
\item
$P_i$ is $D_i$ with small disks around $(0,\pm 1/2)$ removed.
\end{itemize}
Let $A_{i,j}$ be small annular neighborhoods of $c_{i,j}$ in $P_i$, and let $P_i'$ be the closure of the complement of these annuli in $P_i$. For $n>1$, the dividing curves on $P_i'$ can be assumed to be $n-2$ horizontal line segments, along with the line segments in $P_i'$ given by $y=\pm 1/2$. In addition, in each $A_{i,j}$, the dividing curves can be assumed to be the obvious extension of the dividing curves in $P_i'$, with some number of half-twists in $A_{i,1}$ and $A_{i,2}$ and some rigid rotation in $A_{i,3}$.
To elaborate on this last point, identify the closure of $A_{i,3}$ radially with $S^1\times[0,1]$ so that $\Gamma_{P_i}\cap (S^1\times \{0\})$ is $2n$ equally spaced points $p_1,\ldots,p_{2n}$ in the circle $c_{i,3}$ and $\Gamma_{P_i}\cap (S^1\times\{1\})$ is the corresponding set of $2n$ points $p_1',\ldots,p_{2n}'$ in the other boundary component of $A_{i,3}$; then in $A_{i,3}$, $\Gamma_{P_i}$ consists of $2n$ nonintersecting segments connecting $p_1,\ldots,p_{2n}$ to $p_1',\ldots,p_{2n}'$ in some (cyclically permuted) order.
\begin{figure}[ht]
  \relabelbox \small {
  \centerline{\epsfbox{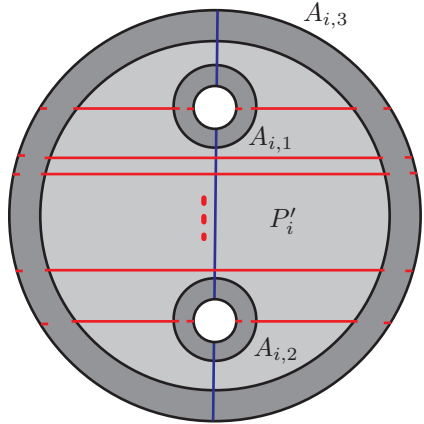}}}
  \relabel{p}{{$P'_i$}}
  \relabel{1}{$A_{i,1}$}
  \relabel{2}{$A_{i,2}$}
  \relabel{3}{{$A_{i,3}$}}
  \endrelabelbox
\caption{The disk $D_i.$ The lightly shaded region is $P_i'$ and the darkly shaded regions are the annuli $A_{i,j}$; the union of all the shaded regions is $P_i$; the vertical line is $C\cap P_i$; and the horizontal lines are the dividing curves $\Gamma_{P_i}.$ In the darkly shaded regions, the dividing curves cross from one boundary component to the other.}
\label{fig:P}
\end{figure}

After rounding the corners  of $D_1\cup D_2\cup A_1$, define $A$ to be the annulus $A_1\cup A_{1,3}\cup A_{2,3}.$ Notice that on $A$ there are $2n$ dividing curves running from one boundary component to the other (we know the dividing curves on $A_1$ as $A_1$ is part of $\partial N_\LL=\partial Q$); let $\Gamma_A$ denote the union of these dividing curves. As above we can choose a product structure $S^1\times [0,1]$ on the closure of $A$ so that $S^1$ has length $2n$, $\Gamma_A\cap (S^1\times \{0\})$ and $\Gamma_A\cap (S^1\times\{1\})$ each consists of $2n$ equally spaced points, and $\Gamma_A$ connects these two sets of points through $2n$ nonintersecting segments.
The dividing curve on the 2--sphere $S$ must be connected since we are in a tight contact structure, and thus the slope $s$ of the curves in $\Gamma_A$ must be relatively prime to $n.$

\begin{figure}[ht]
  \relabelbox \small {
  \centerline{\epsfbox{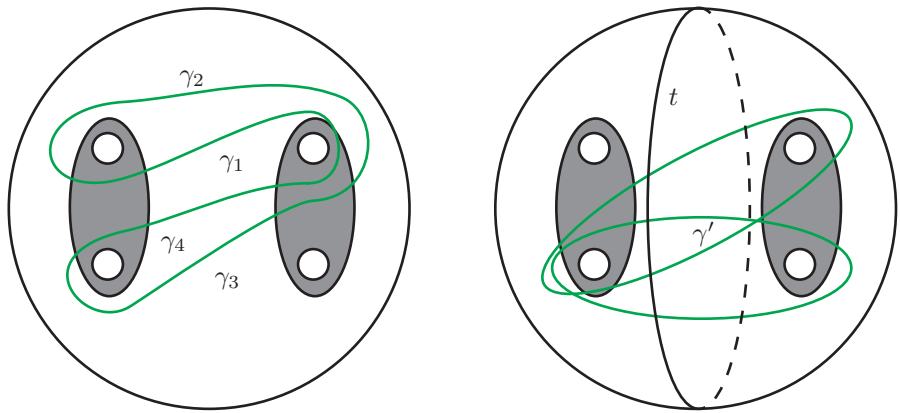}}}
  \relabel{1}{{$\gamma_1$}}
    \relabel{2}{{$\gamma_2$}}
      \relabel{3}{{$\gamma_3$}}
        \relabel{4}{{$\gamma_4$}}
  \relabel{p}{$\gamma'$}
  \relabel{t}{$t$}
  \endrelabelbox
\caption{The sphere $S$ with $P_1'$ and $P_2'$ shaded. The curve
  $\gamma$ is shown on the left and its intersection with the white
  annulus $A$ consists of the four curves
  $\gamma_1,\gamma_2,\gamma_3,\gamma_4.$
  If $m$ is even then $\gamma'$ is the image of the horizontal curve on
  the right after $\frac{m}{2}$ Dehn twists along the curve $t$ and if $m$ is
  odd then it is the image of the diagonal curve on the right after $\frac{m-1}{2}$
  Dehn twists along the curve $t$.}
\label{fig:G}
\end{figure}
Define curves $\gamma$ and $\gamma'$ in $S$ as shown in Figure~\ref{fig:G}. Then $\gamma$ and $\gamma'$
bound disks $\Dout$ in $\Bout$ and $\Din$ in $\Bin$, respectively,
where both disks are disjoint from $\K.$  We can assume that both
$\gamma$ and $\gamma'$ intersect the dividing curves of $S$ only in
$A$, and that the curve $\gamma$ intersects $A$ in four arcs
$\gamma_1,\gamma_2,\gamma_3,\gamma_4$ as shown in
Figure~\ref{fig:G}.  If $\gamma$ is isotoped so that it intersects the $P_i'$ in horizontal arcs, then
using the above identification of the closure of $A$ with $S^1\times [0,1]$, the slopes of
$\gamma_i$ can be taken to be $2, 0, n, n-2$, respectively.
Similarly $\gamma'$ intersects $A$ in two parallel linear arcs
$\gamma_1'$ and $\gamma_2'$ of slope $nm$.

Legendrian realize $\gamma$ and $\gamma'$, and make $\Dout$ and $\Din$ convex.
If $\gamma$ or $\gamma'$ does not have maximal $\tb$, then $\Dout$ or
$\Din$ has at least two boundary-parallel dividing curves and thus
there are at least two bypasses for $S\setminus N_\LL$ in the
complement of $\K.$ (Notice that when there are only two dividing curves
the two bypasses are not disjoint, however we will see below that
we will only need one bypass in this case, and in most cases.)
Let $c$ be the curve along which one of the
bypasses is attached. (Note that since $\gamma'$ bounds a disc in
$\Bin$ the bypass in that case is attached from the back so its action
on the dividing curves of $S$ is the mirror of
Figure~\ref{fig:bypassattach}.) We will show that in most cases the
bypass reduces $n$, leading to a contradiction. In particular, we have
the following claim.

\begin{claim}\label{claim}
If $c\cap (P_1'\cup P_2')$ has at most one component and $n\geq 2,$ then we can destabilize $\LL$ (contradicting the maximality of $\tb(\LL)$) except possibly when $n=3,$ in which case we can change $s$ by $1$ or $-1$ depending on whether $c$ is on $\gamma$ or $\gamma'.$
\end{claim}

\begin{remark}\label{remark:twice}
{\em In the proof below notice that when $n=3$ we can sometimes destabilize $\LL$ and sometimes change $s$. In the exceptional case when $\LL$ does not destabilize notice that $s$ must be relatively prime to $n$. Thus we can only attach such an exceptional bypass once and any subsequent bypasses attached from the same side of $S$, if it exists, cannot be exceptional and must then provide a destabilization of $\LL.$}
\end{remark}

We first prove the claim, then use it to complete the proof of the lemma.
\begin{proof}[Proof of Claim] First note that if $c\cap (P_1'\cup P_2')=\emptyset$, then when we attach the bypass to $A$, we see a destabilization for $\LL$ in the complement of $\K$, which contradicts the maximality of $\tb(\LL)$.
Thus to prove the claim, we may assume that $c\cap (P_1'\cup P_2')$ has one component. We treat the cases $n\geq 4$, $n=3$, and $n=2$ separately. For $n \geq 4$, there
are $8$ subcases shown in Figure~\ref{fig:cases}. The subcases for $\gamma'$ (when the bypass is attached from the back) are the mirrors of these cases.
\begin{figure}[ht]
  \relabelbox \small {
  \centerline{\epsfbox{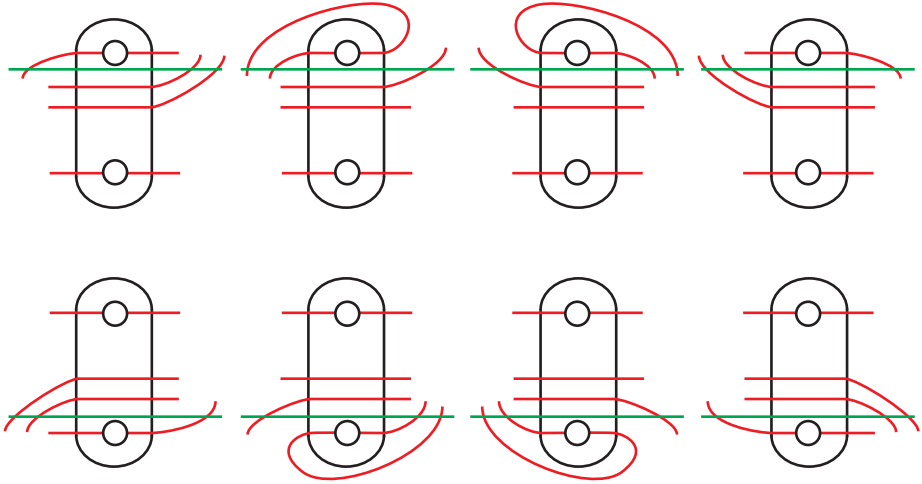}}}
  \endrelabelbox
\caption{The 8 subcases of Case 2 (ordered left to right and top to bottom).}
\label{fig:cases}
\end{figure}
In subcases 3, 4, 7, 8, one can use bypass rotation, Lemma~\ref{moveleft}, to obtain a bypass disjoint from $(P_1'\cup P_2')$ and hence destabilize $\LL.$  The bypasses in subcases 2 and 6 are disallowed: if there is a bypass there then we would have a convex sphere with disconnected dividing set, contradicting tightness.  In subcases 1 and 5 we can still destabilize $\LL$ if $n>3.$ Thus we contradict the minimality of $n$ in all subcases except when $n \leq 3$.

For $n=3$, we argue as above except for subcases 1 and 5. In these cases, notice that attaching the bypass does not destabilize $\LL$ but it does alter the dividing curves. Specifically pushing across the bypass adds (or subtracts, in the case of $\gamma'$) $1$ to the slope of $\Gamma_A$ once we have renormalized everything after attaching the bypass. Thus we see that when $n=3$ we can either destabilize $\LL$ or change the slope of $s$ by 1. This establishes the $n=3$ case of the claim.

Finally, for $n=2$, there are $4$ subcases analogous to Figure~\ref{fig:cases}. It is readily checked, as above, that two of these are disallowed by tightness, while the other two lead to destabilizations of $\LL$.
\end{proof}

We now return to the proof of the lemma. Since the statement of the
lemma is known for $m=0,1,\pm 2$ by the classification of Legendrian
unknots, torus knots, and the figure eight knot
\cite{EliashbergFraser99,EtnyreHonda01b}, we need only check it for
$|m|>2$. As mentioned in Remark~\ref{remark:twice}, there is an exceptional case when $s=3$ and the destabilization argument cannot be applied directly; we ignore this case for now and return to it at the end of the proof.

Since $\gamma'_1$ and $\gamma'_2$ are parallel, the intersection of $\gamma'$ with $\Gamma_{S \setminus N}$ must be essential.
It follows that $|\gamma_i'\cap\Gamma_A|=|s-nm|$ for $i=1,2$, since $\gamma_i'$ has slope $nm$ and $\Gamma_A$ has slope $s$. Now if
$|\gamma_i'\cap\Gamma_A| \geq 2$ for $i=1,2$, then for any bypass $c$ along $\gamma'$,
$c\cap (P_1'\cup P_2')$ has at most one component. Thus we can apply the claim if $|s-nm|\geq 2$. It is an easy exercise in algebra to check that $|s-nm|\geq 2$ for all $m$ with $|m|>2$ whenever $n\geq 2$ and $|s|\leq 3n-2$.
This establishes the lemma when $(n,s)$ is in the shaded region in the left diagram of Figure~\ref{fig:graph}.

\begin{figure}[ht]
  \relabelbox \small {
  \centerline{\epsfbox{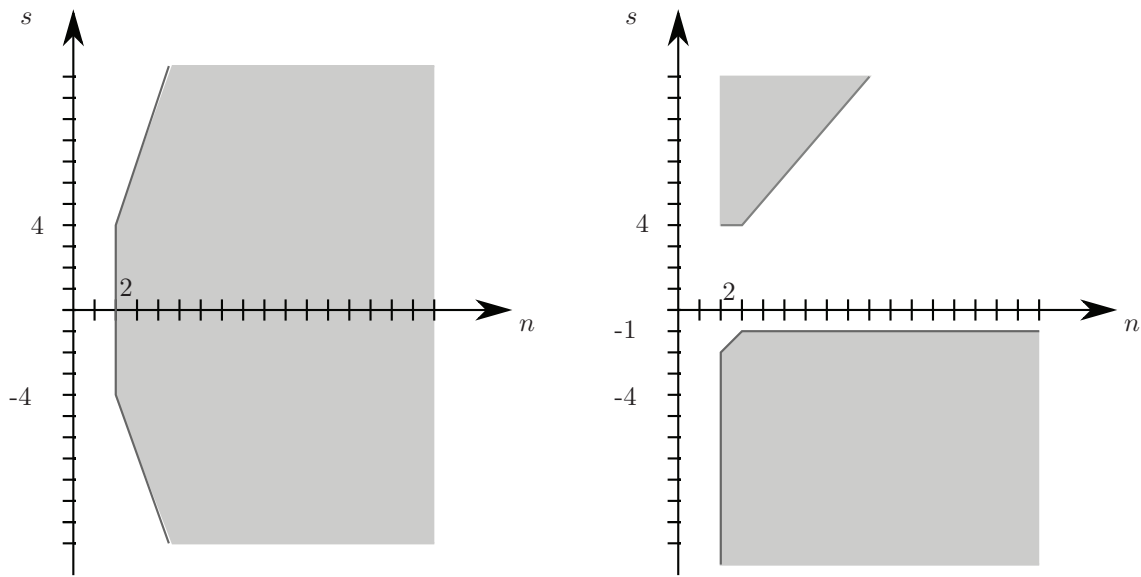}}}
  \relabel{nl}{{$s$}}
    \relabel{4l}{{4}}
      \relabel{m4l}{{-4}}
        \relabel{2sl}{{2}}
  \relabel{sl}{$n$}
\relabel{nr}{{$s$}}
    \relabel{4r}{{4}}
      \relabel{m1r}{{-1}}
\relabel{m2r}{{-4}}
\relabel{1sr}{{}}
        \relabel{2sr}{{2}}
  \relabel{sr}{$n$}
  \endrelabelbox
\caption{The pairs $(n,s)$ where the destabilization of $\gamma'$ gives a bypass satisfying the conditions of Claim \ref{claim}
(left), and the pairs $(n,s)$ where the destabilization of $\gamma$ 
gives a bypass satisfying the conditions of Claim \ref{claim}
(right).}
\label{fig:graph}
\end{figure}

Similarly, if $|\gamma_i\cap\Gamma_A|\geq 2$ for all but at most one
of $i=1,2,3,4$ and $|\gamma_i\cap\Gamma_A|\geq 1$ for the other $i$,
then we can apply the claim to at least one of the (at least two)
bypasses along $\gamma$. Now the intersection of $\gamma$ and
$\Gamma_{S\setminus N}$ is essential if the signs of $s-2,s,s-n,s-n+2$
all agree.  This is because in this case, a cancellation can only
occur around an arc going through one of the $P_i$'s, and there the
signs of the crossings agree with two of the above signs, so they
cannot cancel. Given that the intersection of $\gamma$ and
$\Gamma_{S\setminus N}$ is essential, we have
$|\gamma_1\cap\Gamma_A|=|s-2|$, $|\gamma_2\cap\Gamma_A|=|s|$,
$|\gamma_3\cap\Gamma_A|=|s-n|,$ and
$|\gamma_4\cap\Gamma_A|=|s-n+2|$. Thus we can apply the claim and
establish the lemma if the following conditions hold:
\begin{itemize}
\item
the signs of $s-2,s,s-n,s-n+2$ all agree;
\item
at least three of $|s-2|,|s|,|s-n|,|s-n+2|$ are $\geq 2$, and the fourth is $\geq 1$.
\end{itemize}
The set of $(n,s)$ for which these conditions hold is the shaded region in the right diagram of Figure~\ref{fig:graph}.

The union of the shaded regions in Figure~\ref{fig:graph} covers all of the half-plane $\{(n,s) : n\geq 2\}$. This covers all possible cases, and thus if $n\not=3$ the knot $\LL$ can always be destabilized by the claim, yielding the desired contradiction. When $n=3$ notice that we can always find two successive bypass attachments along arcs on $\gamma$ or $\gamma'$ that intersect $ (P_1'\cup P_2')$ at most one time.
(To see this notice that if there are not two bypasses along $\gamma'$ then from above we see that $s=\pm nm, \pm (nm\pm 1)$ or $\pm (nm\pm 2)$. Given that $|nm|\geq 6$ we see that in these cases we can find the two bypasses along $\gamma$.)
Thus, Remark~\ref{remark:twice} shows that $\LL$ can be destabilized in this case too.
\end{proof}

\begin{proof}[Proof of Theorem~\ref{thm:sphere}]
Throughout this proof we use the notation established at the beginning of this section and in the proof of Lemma~\ref{lem:sphere}. In particular notice that $P=P_1\cup P_2\cup A.$
We also set $P'=P_1'\cup P_2'\cup A,$ that is, $P'$ is $P$ with
annular neighborhoods of its boundary removed (in which there can be twisting of the dividing curves).

We begin by using Lemma~\ref{lem:sphere} to obtain a Legendrian unknot
$L$ with $\tb=-1$ as described in the lemma. We then use $L$ to create the pieces $P_1,P_2,A$ mentioned above. We will now analyze these pieces.

Recall that we have identified $A$ with $S^1\times [0,1]$ with $S^1$
having length 2, that is, $S^1=[0,2]/\sim$, where $\sim$ identifies the
endpoints of the interval. We further arrange that the dividing curves
$\Gamma_A$ intersect the boundary of $A$ at $\{0,1\}\times \{0,1\}.$
In these coordinates the slope of $\Gamma_A$ is some integer. In
Figure~\ref{fig:sop} we show two examples, one on the left with slope
$0$, and one on the right with slope $1$. All other slopes can be obtained from one of these examples by applying some number of Dehn twists along a curve parallel to the boundary of $A.$
\begin{figure}[ht]
  \relabelbox \small {
  \centerline{\epsfbox{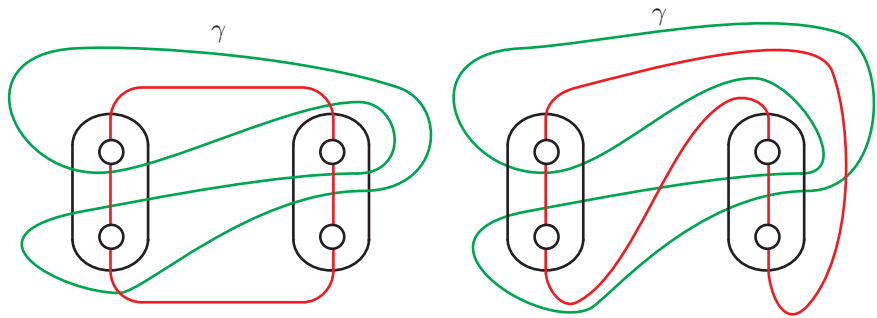}}}
  \relabel{1}{{$\gamma$}}
    \relabel{2}{{$\gamma$}}
  \endrelabelbox
\caption{On the left the dividing curves have slope $0$ on $A$, while on the right they have slope $1.$}
\label{fig:sop}
\end{figure}

As in the proof of Lemma~\ref{lem:sphere} we notice that there is
always a bypass for $P'$ on the disk $\Dout$ with boundary $\gamma.$
Suppose the bypass is attached to $P'$ along a curve $c.$ There are
three cases to consider: (1) $c$ is disjoint from $P_1\cup P_2$; (2)
it has an endpoint in $P_1$ or $P_2$; or (3) the center intersection
point of $c$ with $\Gamma_{P'}$ is contained in $P_1$ or $P_2.$
(Notice that the last two cases do not have to be disjoint if the
slope of $\Gamma_A$ is near zero.)
We consider further subcases depending on the slope of $\Gamma_A$,
which we denote by $s$.

If $s>2$, then
Case (1) results in a reduction of the slope by $2$, Case (2) results
in a reduction of the slope by $1$, and Case (3) is disallowed since
it results in a disconnected dividing curve on $S,$ contradicting the
tightness of the standard contact structure on $S^3$. Thus in this
case,
we can attach bypasses to $P$ to arrange that $s=2,1,$ or $0$.

If $s<-1$, then Cases (1) and (2) are disallowed, and Case (3)
increases the slope by 1. Thus in this case we can assume that $s=-1.$

We have now arranged that $s=2, 1, 0,$ or $-1.$ If $s=-1$, then there
are ten possible bypass attachments. Most are disallowed, while the
ones that are allowed can be used, after bypass rotation using
Lemma~\ref{moveleft}, to increase the slope of $\Gamma_A$ to $0$. If
$s=0$, then there are six possible places for a bypass along $\gamma$;
see the left hand side of Figure~\ref{fig:sop}. Of these, two give a disallowed bypass and the other four, after bypass rotation using Lemma~\ref{moveleft}, can be used to increase the slope of $\Gamma_A$ to $1.$

If $s=2$, then there are ten possible bypass attachments. Of these, four reduce the slope to $1$, and four are disallowed. The remaining two change the dividing curve on $P$ to the one shown in Figure~\ref{fig:2moresubcases} with $s=3$.
\begin{figure}[ht]
  \relabelbox \small {
  \centerline{\epsfbox{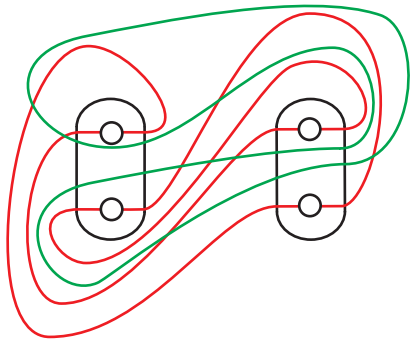}}}
  \endrelabelbox
\caption{The dividing curve after bypass attachment.}
\label{fig:2moresubcases}
\end{figure}
Examining the eight possible bypasses in this new situation, one sees
that six are disallowed and the remaining two return $\Gamma_P$ to the
configuration
with $s=1$.

We have proved that the dividing curves on $P'$ can be made to look
like those in Figure~\ref{fig:divcurve}. To complete the
proof of the theorem, notice that we have a standard neighborhood $N$
of $\K$ as claimed and the dividing curves on $P$ can differ from
those in the figure only by twisting in the annuli $A_{i,j}$ with $i,j=1,2.$ We can add a small neighborhood of the annuli $A_{i,j}$ to $N$ to get a new standard neighborhood of $\K$ (notice the slope of the dividing curves does not change as the neighborhood is increased to contain the annuli) and the surface $P'$ for the old neighborhood is the surface $P$ for the new neighborhood. This completes the proof of the theorem.
\end{proof}

\subsection{The contact structure in $\Bout$}
In this subsection we prove that either the Legendrian knot $\K$ destabilizes or the Legendrian tangle in $\Bout$ is determined.
\begin{theorem}\label{thm:bout}
Let $\K$ be a Legendrian knot in the knot type $K_m,$ $m\not= -1.$ Assume we have chosen an identification of $\K$ with the standard model as in Theorem~\ref{thm:sphere}. Let $D=\{x^2+z^2\leq 1\}\times \{y=0\}$ be a convex disk in $\R^3\subset S^3.$  Then either $\K$ destabilizes or there is a contactomorphism from $\Bout$ to the complement of (the interior of) a standard neighborhood $\{(x,y,z)\,|\, x^2+z^2\leq 1, y^2\leq 1\}$ of $D$ in $S^3$ (with corners rounded) taking $\K\cap \Bout$ to the curves shown in Figure~\ref{fig:bout}.
\end{theorem}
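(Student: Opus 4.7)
The plan is to leverage Theorem~\ref{thm:sphere} together with convex surface theory inside $\Bout$ to either extract a bypass that destabilizes $\K$, or to pin down the contact isotopy type of the pair $(\Bout, \K \cap \Bout)$ by reducing to Honda's uniqueness theorem for tight contact structures on a ball. After invoking Theorem~\ref{thm:sphere}, we may assume $\partial\Bout$ is convex with the normalized dividing curves of Figure~\ref{fig:divcurve}, and the Legendrian tangle $T = \K \cap \Bout$ consists of two Legendrian arcs $\alpha_1, \alpha_2$ realizing the $2$-braid with two negative half-twists.

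My first step is to construct a convex splitting disk $\Sigma \subset \Bout$ that cuts the clasp, for example a disk whose boundary consists of a Legendrian arc running along one strand of the clasp together with an arc $\beta \subset \partial\Bout$ arranged, via the Legendrian realization principle, to transverse-intersect $\Gamma_{\partial\Bout}$ the minimal number of times. The normalization from Theorem~\ref{thm:sphere} determines this minimal number of intersections, and hence the number of endpoints of $\Gamma_\Sigma$ on $\partial\Sigma$. Tightness of $(\Bout,\xist)$ forbids closed components in $\Gamma_\Sigma$, so there is only a finite list of possible configurations of dividing arcs on $\Sigma$.

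Next I would run through these configurations. Any boundary-parallel dividing arc cutting off a half-disk of $\Sigma$ that meets $\K$ furnishes (after Legendrian realization) a bypass attached to a standard convex neighborhood of $\K$ along an arc of $\K$; the standard translation between bypasses and stabilizations then shows that $\K$ destabilizes. Configurations in which the boundary-parallel arc is on the ``wrong side'' can be converted into destabilizing ones via bypass rotation (Lemma~\ref{moveleft}), and configurations whose dividing set would become disconnected on any auxiliary sphere built from $\Sigma$ and portions of $\partial\Bout$ are ruled out by tightness of $(S^3,\xist)$. In the sole remaining case, $\Gamma_\Sigma$ is in a unique normalized form. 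Assuming that $\K$ does not destabilize, I would then cut $\Bout$ along $\Sigma$ (and, if necessary, a second analogous disk for the other strand), round corners as in Lemma~\ref{edgerounding}, and obtain a $3$-ball with explicitly prescribed convex boundary. By Honda's classification of tight contact structures on a ball \cite{Honda00a}, the contact structure on $\Bout$ relative to this boundary data is unique up to isotopy, so the triple $(\Bout,\xist,T)$ is determined up to contactomorphism. Finally, I would verify that the model complement of a standard neighborhood of $D = \{x^2+z^2\leq 1\}\times\{y=0\}$ realizes exactly the same boundary data and Legendrian tangle type, giving the advertised contactomorphism sending $\K\cap\Bout$ to the curves of Figure~\ref{fig:bout}.

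The principal difficulty lies in the third step: the systematic enumeration of dividing configurations on $\Sigma$ (and its companion disk) and the demonstration that every configuration other than the normalized one admits a bypass producing an actual destabilization of $\K$, possibly only after one or more applications of bypass rotation. In particular, one must be careful that each candidate bypass can be pushed to a standard convex neighborhood $N$ of $\K$ along an arc of $\partial N$ so that it genuinely converts to the destabilization move on $\K$, rather than merely altering dividing sets of auxiliary surfaces; this consolidation of the case analysis is where the proof will be most delicate.
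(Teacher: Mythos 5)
Your overall strategy---decompose $\Bout$ along convex disks, enumerate dividing-curve configurations, and either extract a destabilization or pin down the tight structure by cutting to a ball and matching the model---is the same blueprint the paper follows (the paper removes tubular neighborhoods $N_1',N_2'$ of the two arcs of $\K\cap\Bout$ to get a genus-2 handlebody $H'$ and uses its two compressing disks $D_1',D_2'$, which are disjoint from $\K$, rather than a disk with boundary running along a strand of the clasp). However, there is a genuine gap at the step you yourself flag as the principal difficulty, and it is not a routine verification. A boundary-parallel dividing arc on the splitting disk gives a bypass attached to $\partial N_i'$ along some arc, but ``the standard translation between bypasses and stabilizations'' does not apply to an arbitrary such bypass: whether attaching a bypass to the boundary of a standard neighborhood of $\K$ destabilizes $\K$ depends entirely on how the full attaching arc sits on $\partial N$ relative to the dividing curves (whose slope is determined by $\tb(\K)$), and an arc produced locally inside $\Bout$ carries no such global information by itself. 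The paper supplies the missing mechanism: it shows (in a separate Claim) that the attaching arc can be isotoped to lie on $\partial D'$, where $D'$ is the clasp disk with boundary $K_m$, and then compares framings---$D'$ induces framing $(-1)^m$ while $\maxtb(K_m)\leq 1$ for $m$ even and $\leq -1$ for $m$ odd by Proposition~\ref{prop:tbbound}, so the contact framing never exceeds the $D'$-framing and any bypass along $\partial D'$ genuinely destabilizes $\K$. Without this framing comparison (or an equivalent global argument) your case analysis cannot conclude.

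A secondary point: your proposed exclusions ``by tightness'' via disconnected dividing sets on auxiliary spheres do not cover the configurations the paper actually has to rule out, namely $\tb(L_i')=-1$ and the configuration where a boundary-parallel arc on the compressing disk straddles the single intersection point with $\Gamma_{\partial B}$. The paper eliminates these by compressing the resulting genus-2 surface to a convex torus and exhibiting an essential curve that Legendrian realizes to an unknot with $\tb=0$, contradicting tightness. Some argument of this kind is needed to reach the claimed unique normalized form of $\Gamma_{\widehat{D}_i}$ before the cut-and-round/uniqueness-on-the-ball step can be applied.
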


We notice that the dividing curves on the boundary of the ball in Figure~\ref{fig:bout} and the ones on $\Bout$ in Figure~\ref{fig:divcurve} are not the same but that there is a diffeomorphism of the ball that takes one set of curves to the other.
\begin{figure}[ht]
  \relabelbox \small {
  \centerline{\epsfbox{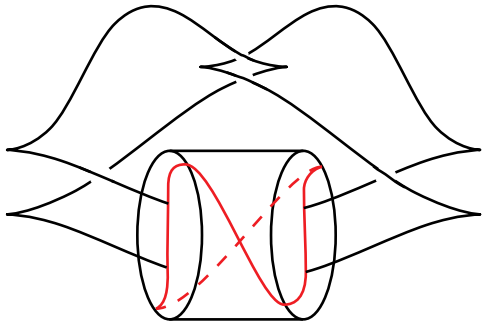}}}
  \endrelabelbox
\caption{Model for a non-destabilizable tangle in $\Bout.$ }
\label{fig:bout}
\end{figure}
\begin{proof}
Let $B$ be the complement of the interior of a neighborhood of $D$ in
$S^3$ with corners rounded and let $l_1$ (left) and $l_2$ (right) be
the two Legendrian arcs in $B$ shown in Figure~\ref{fig:bout}. Let
$N_1=D^2\times [0,1]$ and $N_2=D^2\times[0,1]$ be product
neighborhoods of $l_1$ and $l_2,$ respectively. We can assume each
$N_i\cap \partial B$ consists of two disks, each of which intersects
the dividing curve $\Gamma_{\partial B}$ in an arc. We can further
assume that the characteristic foliation of $\partial B$ has the
boundary of these disks as the union of leaves, and we can arrange that $\partial N_i$ is convex.

Notice that $H=B\setminus (N_1\cup N_2)$ is a genus $2$ handlebody whose boundary is a surface with corners. Two disks $D_1$ and $D_2$ that cut $H$ into a $3$-ball are indicated in Figure~\ref{fig:boutpd}. (To see where the disks come from notice that one can isotope the picture into a standardly embedded genus 2 handlebody in $S^3$ by isotoping the neighborhoods $N_1$ and $N_2$ so that they do not twist around each other. See Figure~\ref{fig:DisksForH}. 
\begin{figure}[ht]
  \relabelbox \small {
  \centerline{\epsfbox{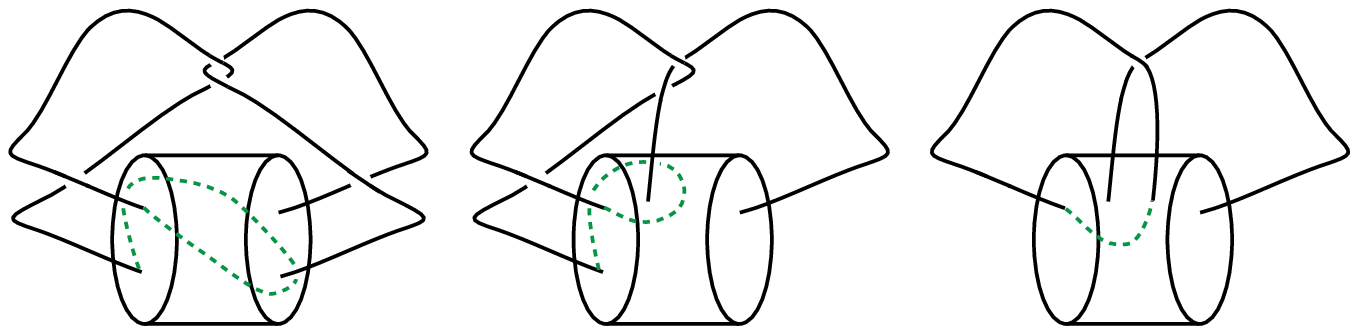}}}
  \endrelabelbox
\caption{On the right hand side we see there is an obvious disk bounded by the dotted curve and one of the arcs in $B$. The boundary of this disk is shown in the middle and left hand pictures after the arcs in $B$ are isotoped. On the right hand side we see the boundary of a disk that will provide a compressing disk for $H$. }
\label{fig:DisksForH}
\end{figure}
The disks are now obvious. Isotoping back to the original picture, once can track the boundaries of the disk to obtain Figure~\ref{fig:boutpd}.)
\begin{figure}[ht]
  \relabelbox \small {
  \centerline{\epsfbox{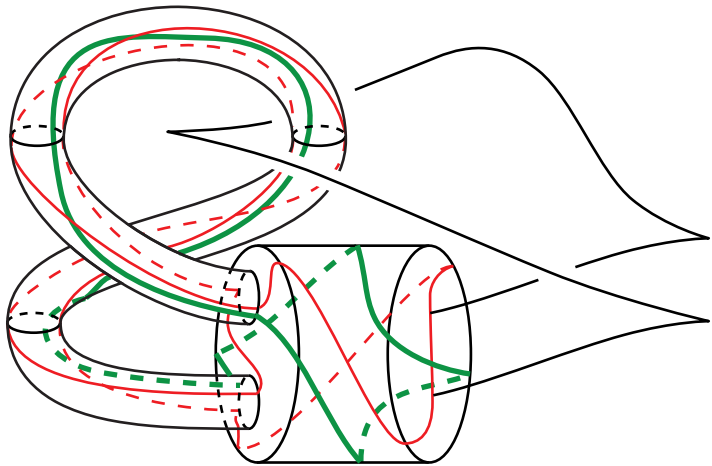}}}
  \endrelabelbox
\caption{The thicker curve bounds a disk $D_1$ in $H.$ The disk $D_2$
  can be seen by reflecting this picture about a vertical line. The
  thinner curves depict the dividing curves.}
\label{fig:boutpd}
\end{figure}

We now proceed to see what data determines the contact structure on $H$. 
A contact structure on a neighborhood $N_b$ of $\partial H$ is determined by the
characteristic foliation on $\partial H.$ We can find a slightly
smaller handlebody $\widehat{H}$ in $H$ such that $\partial
{\widehat{H}}$ is contained in $N_b$ and is obtained by rounding the
corners of $\partial H.$ Let $\widehat{D}_i=D_i\cap \widehat{H}$, and
Legendrian realize $L_i= \partial {\widehat{D}_i}$ on
$\partial{\widehat{H}}.$

Now $L_i$ intersects the dividing set $\Gamma_{\widehat{H}}$ in four
points, three of these points in (the part of $\partial{\widehat{H}}$
coming from) $\partial N_i$ (notice that
$\partial{\widehat{D}_i}\cap \partial N_i$ intersects the
dividing set efficiently, i.e., minimally in its homology class) and
one point $x$ in (the part of $\partial{\widehat{H}}$ coming from)
$\partial B.$ Thus we clearly have $\tb(L_i)=-2.$

Moreover, we can see that neither of the two boundary parallel
dividing curves on $\widehat{D}_i$ straddles the point $x$, as
follows. If one did, then the other dividing curve would give a bypass attached along
$\partial N_i.$ Attaching the bypass to $\partial \widehat{H}$ will result in a surface $\Sigma$
of genus two and a curve $\gamma$, which corresponds to $\partial \widehat{D}_i$ on $\partial\widehat{H}$. The curve $\gamma$ will intersect the dividing curves on $\Sigma$ twice. Compressing $\Sigma$ along a meridional disk to $N_{i+1}$ (where we use the convention that $N_3 = N_1$) will result in a convex torus $T$ on which $\gamma$ sits. (One may also think of $T$ as obtained by attaching the bypass to $\partial
(B\setminus N_{i+1})=\partial (H\cup N_{i+1})$.) The curve $\gamma$ is an essential curve in the torus $T$ and bounds a disk in the complement of $T$. Moreover, while it intersects the dividing set twice, it can be isotoped to be disjoint from it. Thus $\gamma$ can be Legendrian realized, resulting in an unknot with $\tb=0$ which contradicts tightness.


The dividing set on $\widehat{D}_i$ has now been completely
determined, and so the contact structure on $H$ is completely determined by the characteristic foliation on $\partial H$ (and after isotoping the boundary slightly, by $\Gamma_{\partial{H}}$).

We now turn our attention to $\K.$ We assume we have normalized $\K$, a neighborhood of $\K$, and the sphere $S$ as in Theorem~\ref{thm:sphere}. Let $l_1'$ and $l_2'$ be the Legendrian arcs that are the components of $\K\cap \Bout$ and $N'_1$ and $N'_2$ the components of $N\cap \Bout.$ The set $H'=\Bout\setminus(N'_1\cup N'_2)$ is a handlebody of genus 2 whose boundary is a surface with corners. We can choose disks $D_1'$ and $D_2'$ as shown in Figure~\ref{fig:boutr}. Notice that this figure differs from Figure~\ref{fig:bout} by a diffeomorphism of $\Bout$ and agrees with Figure~\ref{fig:twistndec} and the conclusion in Theorem~\ref{thm:sphere}.
\begin{figure}[ht]
  \relabelbox \small {
  \centerline{\epsfbox{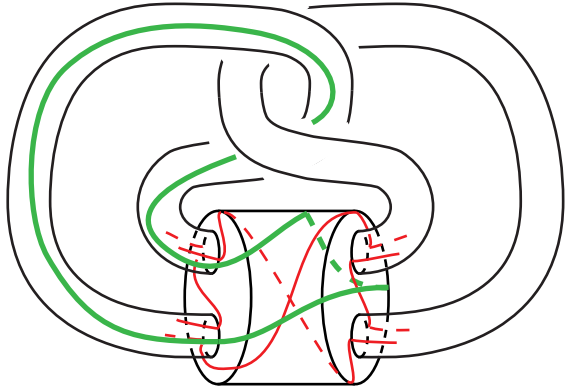}}}
  \endrelabelbox
\caption{The thicker curve bounds a disk $D'_1$ in $H'.$ The disk $D'_2$ can be seen by reflecting this picture about a vertical line.}
\label{fig:boutr}
\end{figure}
(We should take a neighborhood of $\partial H'$ and then take another
copy of the handlebody with the corners on the boundary rounded as we
did above, but for simplicity we will not include this in the
notation.) Legendrian realize $L_i'=\partial D_i'.$ We see that $L_i'$
intersects the dividing set on $\partial \Bout$ exactly twice, once
near $N_i'$ (and this intersection point can be assumed to be on
$N_i'$) and once at some point $y.$ While the intersection is
efficient, as above, if we consider $\partial (\Bout\setminus
N'_{i})$ then the intersection is inefficient. If $\tb(L_i)\leq -3$ then there is a bypass for $\partial N_i\cap \partial H'$ along $D'_i.$

Notice that $K_m$ bounds a singular disk $D'$ with a single clasp singularity.
\begin{claim}
The bypass above coming from $D'_i$
can be thought of as a bypass along $\partial D'$.
\end{claim}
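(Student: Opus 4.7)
The plan is to identify each $D_i'$ with a portion of the singular clasp disk $D'$, and then to transport the bypass found on $D_i'$ across $\partial N_i'$ so that its attaching arc lies along $\K = \partial D'$; such a bypass for $\K$ along $\partial D'$ then provides the desired destabilization.

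First I would locate the clasp singularity of $D'$. Since the $m$ half-twists of $K_m$ are all contained in $\Bin$, the immersed disk $D' \cap \Bout$ has exactly one clasp point, and its preimage consists of two embedded disks glued along a short arc through the clasp. Removing the interiors of $N_1'$ and $N_2'$ from $D' \cap \Bout$ yields, up to isotopy rel boundary inside $H'$, precisely the two disks $D_1'$ and $D_2'$ joined along the clasp arc. After an isotopy of $D'$ I may assume that $D_i' = D' \cap H' \cap (\text{the half of }\Bout\text{ containing }l_i')$.

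Next I would analyze the bypass arc $c \subset \partial H'$ produced in the paragraph preceding the claim. Its middle intersection with the dividing set sits on $\partial B \cap \partial H'$, while the two endpoints of $c$ sit on $\partial N_i' \cap \partial H'$. Because $N_i'$ is a standard contact neighborhood of the Legendrian arc $l_i'$, its boundary carries a foliation by longitudinal Legendrian ruling arcs isotopic to $l_i'$, and bypass rotation (Lemma~\ref{moveleft}) together with the Legendrian realization principle allows me to slide the two endpoints of $c$ along these rulings onto the endpoints of $l_i'$. Using the product structure on $N_i'$, the attaching arc can then be homotoped through $N_i'$ so that the two legs previously on $\partial N_i'$ are replaced by the Legendrian arc $l_i' \subset \K$, giving a new attaching arc lying on $\K \cup (\partial B \cap \partial H')$, i.e., on $\partial D'$ (viewing $D'$ as immersed).

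Finally I would check that this modification preserves the four defining properties of a bypass: the disk still meets $\partial D'$ (now thought of as lying in a convex realization of $D'$) exactly in its new attaching arc, its Legendrian boundary still has $\tb=-1$ because the modification was by a contact isotopy supported in $N_i'$, and the elliptic corner condition at the endpoints is preserved by the characteristic foliation on $\partial N_i'$. The resulting disk is then a bypass for $\K$ along $\partial D'$, as claimed. The main obstacle is the careful bookkeeping of this push-through across $\partial N_i'$, ensuring that the bypass property and the identification with $\partial D'$ survive simultaneously; the crucial technical ingredients are the standard form of the characteristic foliation on the boundary of the standard neighborhood $N_i'$ and the Legendrian realization principle applied to the ruling arc used to perform the push.
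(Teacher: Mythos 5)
There is a genuine gap here, on two counts. First, the mechanism you propose for ``moving'' the bypass is not a legitimate operation. The bypass produced from $D_i'$ is attached to the convex surface $\partial N_i'\subset\partial N$ along a sub-arc of $\partial D_i'\cap\partial N_i'$, and an attaching arc must remain on the convex surface being bypassed. Sliding its endpoints onto the endpoints of $l_i'$ and then pushing the legs through $N_i'$ so that they are ``replaced by $l_i'\subset\K$'' produces an arc on the core of $N$, not on $\partial N$, and neither bypass rotation (Lemma~\ref{moveleft}) nor the Legendrian realization principle moves an attaching arc off the surface it lies on. What the claim actually requires is much less: the curve $\partial D'$ meets $\partial N$ in a push-off of $\K$ with the clasp-disk framing, and one only needs to isotop $D'$ so that $\partial D'\cap N_i=\partial D_i'\cap N_i$; the attaching arc is then already a sub-arc of $\partial D'$, and since the effect of a bypass on $\partial N$ is determined entirely by its attaching arc, no transport of the bypass disk is needed at all.

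Second, and more importantly, you omit the step that carries the mathematical content of the paper's proof: showing that $\partial D'$ intersects the dividing set of $\partial N$ \emph{efficiently}. Without efficiency, the framing comparison in the following paragraph (contact framing $\le$ clasp-disk framing $(-1)^m$) does not let one conclude that a bypass attached along this arc thickens $N$ and destabilizes $\K$. The paper establishes efficiency by writing $\partial D'=c_1\cup c_2$ with $c_1=\partial D'\cap N_i$ and arguing that all intersections of $c_1$ with $\Gamma_{\partial N_i}$ (and, compatibly, of $c_2$ with the dividing curves on $N\setminus N_i$) have the same sign: otherwise one could attach a neighborhood of a Legendrian arc in $\Bin$ to $N_i$ and obtain a standard neighborhood of a Legendrian unknot with nonnegative Thurston--Bennequin number, contradicting tightness. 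Your proposal never addresses this, so even granting your identification of $D_i'$ with a piece of $D'\cap\Bout$, the conclusion that the bypass ``can be thought of as a bypass along $\partial D'$'' in the sense needed for the destabilization would not follow.
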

Notice that the
framing given to $K_m$ by $D'$ is $(-1)^m.$ From
Proposition~\ref{prop:tbbound}, the maximal Thurston--Bennequin number
of $K_m$ is $\leq 1$ when $m$ is even and $\leq -1$ when $m$ is odd;
it follows that the contact framing on $\K$ is always less than or equal to the framing given by $D'.$ Thus a bypass along $\partial D'$ always gives a destabilization of $\K.$

To clarify the what the claim says, recall that the effect of attaching a bypass to a convex surface is entirely determined by its arc of attachment. Thus as long as the arc of attachment for a bypass on $D_i'$ is a subset of $\partial D'$ then we may assume it is a bypass on $D'$ as far as its effect on $\partial N$ is concerned. 

\begin{proof}[Proof of Claim]
Notice that $\partial D'$ can be broken into two parts $c_1=\partial D'\cap N_i$ and $c_2=(\partial D')\setminus c_1.$ Moreover we can assume that $c_1=\partial D_i'\cap N_i.$
If $c_1$ intersects the dividing curves on $\partial N_i$ efficiently
then with the appropriate orientations on $c_1$ and the dividing
curves, all the intersections between $c_1$ and the dividing curves
are negative, since if not then we could add a neighborhood of a
Legendrian arc in $\Bin$ to $N_i$ to construct a neighborhood of a
Legendrian unknot with nonnegative Thurston--Bennequin number,
contradicting tightness. We can similarly assume $c_2$ negatively intersects the
dividing curves on $N\setminus N_i$, where the orientation on the
dividing curves and $c_2$ are consistent with the orientations chosen
for the dividing curves on $N_i$ and $c_1$. We have shown that we can
arrange that $\partial D'$ intersects the dividing curves on $\partial
N$ efficiently and $\partial D'\cap N_i=\partial D_i'\cap N_i.$ Hence
any bypass along $\partial D_i'$ for $\partial N_i$ can be thought to be a bypass
attached along $\partial D'.$
\end{proof}

We assume for the remainder of this proof that $\K$ does not
destabilize. It follows from the above discussion that
$\tb(L'_i)=-2$ or $-1$. Arguing as we did for the standard model above, we see
that $\tb(L'_i)$ cannot equal $-1$, so we may assume
$\tb(L'_i)=-2$. Moreover, as above, we see that neither of the
two boundary parallel dividing curves on $D_i'$ can straddle $y.$ Thus
the configuration of the dividing curves $\Gamma_{D_i'}$ is
determined, and the contact structure on
$H'$ is determined by the characteristic foliation on $\partial H'.$

We can thus find a diffeomorphism
$\phi:B\to\Bout$ that preserves the dividing set on the
boundary and takes $l_i$ to $l_i'$ and $D_i$ to $D_i'.$ Since $\phi$ can
be isotoped to be a contactomorphism in a neighborhood of $(\partial
B)\cup l_1\cup l_2$ and the dividing curves on $D_i$ and $D_i'$
are determined above, we can isotop $\phi$ to be a contactomorphism
from $B$ to $\Bout$ taking $l_1\cup l_2$ to $l_1'\cup l_2'=\K\cap
\Bout.$
\end{proof}

\subsection{The contact structure in $\Bin$ and
  braids}\label{subsec:openbraids}
The disc $D=\{x^2+z^2\le1\}\times\{y_0\}$ is convex in $(S^3,\xist)$ with dividing curve $\Gamma_D=\{0\}\times [-1,1]\times\{y_0\}$. Fix $m$ points $\{(-1+\frac{2i}{m+1},y_0,0)\}_{i=1}^{m}$ on $\Gamma_D$. Then the fronts of Legendrian braids in $D\times [-1,1]$ with endpoints
$\{(-1+\frac{2i}{m+1},-1,0)\}_{i=1}^{m} \cup
\{(-1+\frac{2i}{m+1},1,0)\}_{i=1}^{m}$ are described as follows:

\begin{theorem}[Etnyre and V\'ertesi
\cite{EtnyreVertesi09Pre}]\label{thm:braids}
The Legendrian representations of a braid in $D\times [-1,1]$ are built up from the building blocks of Figure \ref{fig:openbraid}. \qed
\end{theorem}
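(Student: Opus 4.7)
The plan is a normal-form argument for Legendrian fronts of braids, combined with convex-surface theory applied to the family of cross-sectional discs $D\times\{t\}$. First, since $\xist=\ker(dz-y\,dx)$, any Legendrian braid projects to a front in the $xz$-plane whose only singularities are transverse double points and semicubical cusps (after a generic $C^0$-small Legendrian perturbation); the $y$-coordinate is recovered from the slope $dz/dx$. I would further arrange that at all but finitely many $t\in[-1,1]$, the slice $D\times\{t\}$ meets the braid transversely in $m$ points whose $x$-coordinates realise a fixed combinatorial pattern relative to the dividing arc $\Gamma_D=\{0\}\times[-1,1]\times\{y_0\}$.

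Second, on any maximal subinterval of $t$ where this combinatorial pattern is constant, the braid can be Legendrian-isotoped rel boundary to $m$ parallel vertical strands. This follows from Giroux's flexibility: the contact structure on a product $D\times I$ with specified characteristic foliation on the boundary and a single dividing arc on each cross-section is unique up to isotopy rel boundary. These constant subintervals correspond to the trivial product building block of Figure~\ref{fig:openbraid}.

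Third, I would analyse the finitely many critical $t$-values at which the combinatorial pattern changes. Two generic types of transition occur: either two adjacent braid points exchange their $x$-order (a crossing, which in the front appears as a cusp-pair), or a single braid point traverses the dividing arc (a half-twist of the kind appearing in Figure~\ref{fig:zs-example}). A thin slab $D\times[t_0-\epsilon,t_0+\epsilon]$ around each critical $t_0$ is a contact $3$-ball whose boundary dividing set is determined by the combinatorial change; by Eliashberg's uniqueness theorem for tight contact structures on $B^3$, the slab is contactomorphic rel boundary to a standard local model, which is one of the listed building blocks.

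The main obstacle is ruling out degenerate or simultaneous transitions and verifying that arbitrary compositions of building blocks can be realised. The former is handled by a transversality argument separating coincident events in $t$. The latter requires checking that the combinatorial output of one block is compatible as combinatorial input for the next, which reduces to the invariance of the cross-sectional dividing arc. When a combinatorially awkward sequence arises, a bypass-rotation argument in the spirit of Lemma~\ref{moveleft} can be used to reorder the blocks, mirroring the kind of local moves employed in the isotopies of Section~\ref{sec:sketch}.
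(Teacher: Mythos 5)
First, a point of reference: this paper does not actually prove Theorem~\ref{thm:braids}; the statement is imported from the external, in-preparation reference \cite{EtnyreVertesi09Pre} and stated without proof. So there is no internal argument to compare yours against. That said, your overall strategy --- slicing $D\times[-1,1]$ by the discs $D\times\{t\}$, straightening the tangle on intervals where the combinatorics of the intersection with the dividing arc is constant, and analyzing the finitely many transition times --- is the natural convex-surface approach and is presumably close in spirit to the cited work.

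There is, however, a genuine gap at the decisive step. You conclude that each thin slab around a critical $t_0$ is ``contactomorphic rel boundary to a standard local model'' by Eliashberg's uniqueness of the tight contact structure on $B^3$. Uniqueness of the \emph{contact structure} on a ball rel boundary says nothing about the \emph{Legendrian tangle} it contains: two Legendrian tangles with the same endpoints and boundary data, each sitting in a standard tight ball, need not be isotopic rel boundary. Indeed, the entire content of the theorem is the enumeration of these possibilities, and the resulting list includes the stabilization blocks $Z(0,1)$ and $S(1,0)$ as well as the two inequivalent negative-crossing blocks $Z(k,l)$ and $S(k,l)$ --- none of which can be told apart by the tight contact structure on the complement alone. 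To classify the tangle in a slab one must remove a standard neighbourhood of the strands, study convex compressing and separating discs of the resulting handlebody, normalize their dividing sets, and locate or exclude bypasses; this is exactly the kind of analysis the present paper carries out for $\Bout$ in Theorem~\ref{thm:bout}, and it is the part your proposal omits. The same objection applies to your second step: Giroux flexibility gives uniqueness of the contact structure on a product slab, not that the $m$ Legendrian strands inside it can be straightened rel boundary. Finally, the assertion that a generic family of slices is convex away from finitely many times, with transitions realized by single standard events, requires the bypass/movie machinery for one-parameter families of convex surfaces rather than a bare transversality argument; as written, the list of transition types (and hence of building blocks) is asserted rather than derived.
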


\begin{figure}[ht]
  \relabelbox \small {
  \centerline{\epsfbox{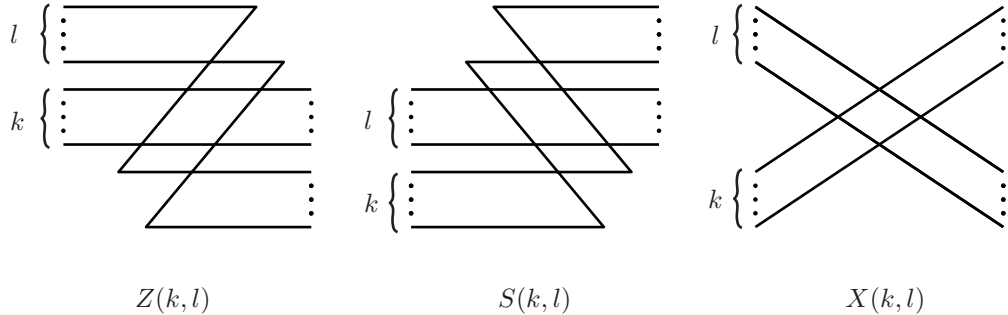}}}
  \relabel{1}{{$l$}}
  \relabel{2}{$k$}
  \relabel{3}{$l$}
  \relabel{4}{{$k$}}
  \relabel{5}{$l$}
  \relabel{6}{$k$}
  \relabel{Z}{{$Z(k,l)$}}
  \relabel{S}{$S(k,l)$}
  \relabel{X}{$X(k,l)$}
  \endrelabelbox
\caption{Building blocks of Legendrian braids. There can be other
  horizontal strands, not depicted, above and/or below the strands shown..}
\label{fig:openbraid}
\end{figure}

Note that $Z(0,1)$ and $S(1,0)$ are just stabilizations.
Using Theorem \ref{thm:braids} we can understand Legendrian braids
with two strands;
we will write $Z=Z(1,1)$, $X=X(1,1)$, $S=S(1,1)$.
Notice that if $Z$ or $S$ is followed by $X$ or vice versa then it
destabilizes, see Figure \ref{fig:SX}. This observation immediately
yields the following result.

\begin{figure}[ht]
  \relabelbox \small {
  \centerline{\epsfbox{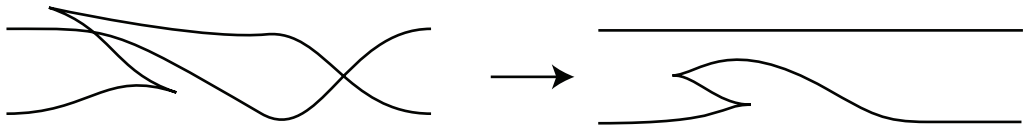}}}
  \endrelabelbox
\caption{$S=S(1,1)$ followed by $X=X(1,1)$ is Legendrian isotopic to a trivial braid with one stabilization.}
\label{fig:SX}
\end{figure}

\begin{proposition}\label{prop:leg2braid} Consider a braid with two strands and $n$ half twists.
\begin{enumerate}
\item If $n\ge 0$ then a  Legendrian representation of $B$ either destabilizes or consists of $n$ blocks of type $X$;
\item If $n<0$ then a Legendrian representation of $B$ either destabilizes or is built up from $n$ building blocks of type $S$ and $Z$ in any order. \qed
\end{enumerate}
\end{proposition}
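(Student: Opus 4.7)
My plan is to apply Theorem~\ref{thm:braids} to write any Legendrian representative of the two-strand braid $B$ as a vertical concatenation of building blocks of type $X(k,l)$, $Z(k,l)$, and $S(k,l)$. For a braid on two strands, the blocks that genuinely produce a crossing of the two strands are the standard ones $X=X(1,1)$, $Z=Z(1,1)$, and $S=S(1,1)$, while any block with $k+l<2$ (such as $Z(0,1)$ or $S(1,0)$) is a stabilization attached to one of the strands, as noted just after the statement of Theorem~\ref{thm:braids}. I first dispose of the latter possibility: if any such stabilization block appears, then the representative destabilizes by construction, and both conclusions hold vacuously.

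Assuming then that the representative does not destabilize, I may assume the word consists purely of crossing blocks $X$, $Z$, and $S$; let $a$, $b$, $c$ denote their respective multiplicities. Since each $X$ is a positive crossing and each $Z$ or $S$ is a negative crossing, the writhe of the resulting front equals $a-b-c$, which must equal the algebraic twist count $n$. I next invoke the observation recorded after Theorem~\ref{thm:braids} and illustrated in Figure~\ref{fig:SX}: any $X$ block directly adjacent (in either order) to a $Z$ or $S$ block is Legendrian isotopic to a trivial braid carrying a stabilization, so it destabilizes. Hence if the representative is nondestabilizable, no such adjacency can occur in the word; but since the word is a sequence of elements from $\{X,Z,S\}$, this forces the sequence to consist entirely of $X$'s or entirely of $Z$'s and $S$'s.

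Combined with $a-b-c=n$, the all-$X$ case yields $a=n$ (requiring $n\geq 0$), which is exactly conclusion~(1); the all-$Z/S$ case yields $b+c=-n$ (requiring $n\leq 0$), which is exactly conclusion~(2). In the edge case $n=0$ both options reduce to the trivial braid with no blocks, and so are consistent with both items.

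The step I expect to require the most care is the initial classification of which blocks from Figure~\ref{fig:openbraid} can actually appear on two strands, and in particular the verification that every block with $k+l<2$ really is a stabilization of one of the two strands rather than a more exotic piece worth considering separately. Once this enumeration is in place, the remainder is just the one-line writhe equation together with the adjacency observation of Figure~\ref{fig:SX}, so I do not anticipate any further technical hurdle.
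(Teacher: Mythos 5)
Your proposal is correct and follows essentially the same route as the paper, which states the proposition with a \qed precisely because it regards it as an immediate consequence of Theorem~\ref{thm:braids} together with the observation (Figure~\ref{fig:SX}) that an $X$ adjacent to a $Z$ or $S$ destabilizes. Your write-up simply makes explicit the adjacency and counting steps that the paper leaves to the reader.
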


This proposition allows us to understand $\K\cap \Bin.$
\begin{theorem}
Let $\K$ be a Legendrian knot in the knot types $K_m, m\not=-1.$ Either $\K$ destabilizes or
the contactomorphism from $\Bout$ to a ball in $S^3$ given in
Theorem~\ref{thm:bout} can be extended to $\Bin$, giving a
contactomorphism from $S^3$ to itself that maps $\K\cap \Bin$ to a Legendrian braid on two strands with $m+2$ twists.
\end{theorem}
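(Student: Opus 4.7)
The plan is to extend the contactomorphism $\phi\colon\Bout\to B$ provided by Theorem~\ref{thm:bout} to a contactomorphism of all of $S^3$, and then verify that the resulting image of $\K\cap\Bin$ sits inside the product neighborhood of $D$ as a Legendrian 2-braid in the sense of Theorem~\ref{thm:braids}. Throughout I assume $\K$ does not destabilize, so Theorem~\ref{thm:bout} does supply such a $\phi$. Recall that $\Bin$ and the standard neighborhood $D\times[-1,1]$ of $D$ are the complementary 3-balls to $\Bout$ and $B$ in $S^3$.

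First I would upgrade the map on boundaries. The contactomorphism $\phi$ identifies $\partial\Bout$ with $\partial B$ as convex surfaces, though the resulting dividing curves on $\partial B$ differ from the ``standard'' ones inherited from $D\times[-1,1]$ only by a boundary diffeomorphism (as remarked after the statement of Theorem~\ref{thm:bout}). After precomposing $\phi$ with a contact isotopy supported in a collar of $\partial\Bout$ that realizes this diffeomorphism of dividing sets, we may assume $\phi|_{\partial\Bout}$ matches the two characteristic foliations on the nose. Now $\Bin$ and $D\times[-1,1]$ are both tight 3-balls with convex boundary carrying identical (connected) dividing sets, so by Eliashberg's classification of tight contact structures on $B^3$ with prescribed boundary, there is a contactomorphism $\Phi\colon \Bin\to D\times[-1,1]$ extending $\phi|_{\partial\Bin}$. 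Gluing $\phi$ and $\Phi$ produces the desired contactomorphism of $S^3$.

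Next I would check that $\Phi(\K\cap\Bin)$ is a Legendrian 2-braid. The four endpoints on $\partial(D\times[-1,1])$ are fixed by the normalization: two on $D\times\{-1\}$ and two on $D\times\{1\}$, in the positions used in Theorem~\ref{thm:braids}. Since the neighborhood $\Phi(N\cap\Bin)$ of the tangle is contact-isotopic to a standard neighborhood of a Legendrian braid with the same boundary data (because the characteristic foliation on $\partial(\Phi(N\cap\Bin))$ is determined by the dividing set and matches that of a standard braid neighborhood), a further adjustment of $\Phi$ by an isotopy rel boundary puts $\Phi(\K\cap\Bin)$ into braid form with respect to the product structure on $D\times[-1,1]$. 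To pin down the twist count, the resulting topological braid type in $D\times[-1,1]$ is dictated by the knot type $K_m$, the fixed model in Figure~\ref{fig:twistndec}, and the identification of $\Bout$ with $B$. A direct computation comparing the normalized dividing set on $S$ coming from Theorem~\ref{thm:sphere} with the dividing set induced from the product structure on $D\times[-1,1]$ shows that the identification contributes two additional half twists, so the braid carries $m+2$ half twists in total.

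The main obstacle is the second step: even once $\Phi$ is constructed, its image of $\K\cap\Bin$ is a priori only a Legendrian tangle, and one must genuinely use the freedom (rel boundary) in Eliashberg's uniqueness statement, together with an understanding of Legendrian tangles in $D\times[-1,1]$ as in Theorem~\ref{thm:braids}, to push the tangle into actual braid position. Once this is done, Proposition~\ref{prop:leg2braid} immediately classifies the braid up to destabilization and feeds directly into Theorems~\ref{thm:frontneg}, \ref{thm:frontpos}, and~\ref{thm:destab}.
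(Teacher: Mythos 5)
Your proposal is correct and follows essentially the same route as the paper: the paper likewise extends $\phi$ as a diffeomorphism, invokes uniqueness of the tight contact structure on the $3$-ball to upgrade it to a contactomorphism rel $\Bout$, and then observes that the image of $\K\cap\Bin$ is a Legendrian $2$-braid with $m+2$ twists. The paper's version is simply terser, leaving the boundary-matching and the twist count as immediate consequences of the setup rather than spelling them out.
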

\begin{proof}
The contactomorphism clearly extends as a diffeomorphism and since there is a unique contact structure up to isotopy on the 3-ball, we can isotop this diffeomorphism (relative to $\Bout$) to a contactomorphism on $\Bin.$ The image of $\K\cap \Bin$ is clearly a Legendrian 2-braid with $m+2$ twists.
\end{proof}

We are now ready to simultaneously prove  Theorems~\ref{thm:frontneg}, \ref{thm:frontpos}, and~\ref{thm:destab}.
\begin{proof}[Proof of  Theorems~\ref{thm:frontneg}, \ref{thm:frontpos}, and~\ref{thm:destab}]
If $m\not =-1$, let $\K$ be a Legendrian realization of $K_m.$ From
the previous theorem either $\K$ destabilizes or there is a
contactomorphism of $S^3$ taking $\K$ to one of the Legendrian knots
shown on the left of Figure~\ref{fig:legtwistn}; note that the box
shown there is a Legendrian 2-braid. If there is not an obvious
destabilization of the 2-braid, then by
Proposition~\ref{prop:leg2braid}, it is obtained by stacking $|m+2|$
$S$'s and $Z$'s together if $m\leq -2$, or $m+2$  $X$'s
if $m\geq 0.$ Clearly this agrees with Figure~\ref{fig:legtwistn} for
$m\leq -2,$ but also notice that for $m\geq 0$ this gives a knot
isotopic to the one in Figure~\ref{fig:mpos}. Since Legendrian isotopy
in the standard contact structure on $S^3$ is the same as ambient contactomorphism (i.e., a contactomorphism sending one Legendrian
knot to the other) \cite{Eliashberg91},
this completes the proof once we know that the knots shown in Figures~\ref{fig:legtwistn} and~\ref{fig:mpos} do not destabilize. But this is the content of Proposition~\ref{prop:tbbound}.
\end{proof}

\def\cprime{$'$} \def\cprime{$'$}


\begin{thebibliography}{10}

\bibitem{BirmanMenasco06II}
Joan~S. Birman and William~W. Menasco.
\newblock Stabilization in the braid groups. {II}. {T}ransversal simplicity of
  knots.
\newblock {\em Geom. Topol.}, 10:1425--1452 (electronic), 2006.

\bibitem{Chekanov02}
Yuri Chekanov.
\newblock Differential algebra of {L}egendrian links.
\newblock {\em Invent. Math.}, 150(3):441--483, 2002.

\bibitem{Eliashberg91}
Yakov Eliashberg.
\newblock Legendrian and transversal knots in tight contact {$3$}-manifolds.
\newblock In {\em Topological methods in modern mathematics ({S}tony {B}rook,
              {NY}, 1991)}, pages 171--193, 1993.

\bibitem{Eliashberg98}
Yakov Eliashberg.
\newblock Invariants in contact topology.
\newblock In {\em Proceedings of the International Congress of Mathematicians,
  Vol. II (Berlin, 1998)}, number Extra Vol. II, pages 327--338 (electronic),
  1998.

\bibitem{EliashbergFraser99}
Yakov Eliashberg and Maia Fraser.
\newblock Classification of topologically trivial {L}egendrian knots.
\newblock In {\em Geometry, topology, and dynamics (Montreal, PQ, 1995)}, pages 17--51, CRM Proc. Lecture Notes, 15, 1998.

\bibitem{EpsteinFuchsMeyer01}
Judith Epstein, Dmitry Fuchs, and Maike Meyer.
\newblock Chekanov-{E}liashberg invariants and transverse approximations of
  {L}egendrian knots.
\newblock {\em Pacific J. Math.}, 201(1):89--106, 2001.

\bibitem{EtnyreLegendriansurvey}
John~B. Etnyre.
\newblock Legendrian and transversal knots.
\newblock In {\em Handbook of knot theory}, pages 105--185. Elsevier B. V., Amsterdam, 2005.

\bibitem{EtnyreHonda01b}
John~B. Etnyre and Ko~Honda.
\newblock Knots and contact geometry. {I}. {T}orus knots and the figure eight
  knot.
\newblock {\em J. Symplectic Geom.}, 1(1):63--120, 2001.

\bibitem{EtnyreHonda03}
John~B. Etnyre and Ko~Honda.
\newblock On connected sums and {L}egendrian knots.
\newblock {\em Adv. Math.}, 179(1):59--74, 2003.

\bibitem{EtnyreHonda05}
John~B. Etnyre and Ko~Honda.
\newblock Cabling and transverse simplicity.
\newblock {\em Ann. of Math. (2)}, 162(3):1305--1333, 2005.

\bibitem{Tosun2009Pre}
John~B. Etnyre, Douglas~J. LaFountain, and B\"ulent Tosun.
\newblock {L}egendrian and transverse cables of positive torus knots.
\newblock arXiv:1104.0550, 2011.

\bibitem{EtnyreVertesi09Pre}
John~B. Etnyre and Vera V{\'e}rtesi.
\newblock Satellites and postive braids.
\newblock In preparation, 2011.

\bibitem{Fuchs}
Dmitry Fuchs.
\newblock Chekanov--{E}liashberg invariant of Legendrian knots:
existence of augmentations.
\newblock {\em J. Geom. Phys.}, 47(1):43--65, 2003.

\bibitem{Giroux91}
Emmanuel Giroux.
\newblock Convexit\'e en topologie de contact.
\newblock {\em Comment. Math. Helv.}, 66(4):637--677, 1991.

\bibitem{Honda00a}
Ko~Honda.
\newblock On the classification of tight contact structures. {I}.
\newblock {\em Geom. Topol.}, 4:309--368 (electronic), 2000.

\bibitem{HondaKazezMatic07}
Ko~Honda, William~H. Kazez, and Gordana Mati{\'c}.
\newblock Right-veering diffeomorphisms of compact surfaces with boundary.
\newblock {\em Invent. Math.}, 169(2):427--449, 2007.

\bibitem{Kanda}
Yutaka~Kanda.
\newblock The classification of tight contact structures on the $3$-torus.
\newblock {\em Comm. Anal. Geom}, 5: 413--438, 1997.

\bibitem{LOSSz}
Paolo Lisca, Peter Ozsv\'ath, Andras Stipsicz, and Zoltan Szab\'o.
\newblock {H}eegaard {F}loer invariants of {L}egendrian knots in contact three-manifolds.
\newblock {\em J. Eur. Math. Soc. (JEMS)}, 11(6):1307--1363, 2009.

\bibitem{Ng2bridge}
Lenhard L. Ng.
\newblock Maximal {T}hurston-{B}ennequin number of two-bridge links.
\newblock {\em Algebr. Geom. Topol.}, 1:427--434, 2001.

\bibitem{MR2131643}
Lenhard Ng and Lisa Traynor.
\newblock Legendrian solid-torus links.
\newblock {\em J. Symplectic Geom.}, 2(3):411--443, 2004.

\bibitem{OzsvathStipsicz2008Pre}
Peter Ozsv\'ath and Andras Stipsicz.
\newblock Contact surgeries and the transverse invariant in knot {F}loer
  homology.
\newblock {\em J. Inst. Math. Jussieu}, 9(3):601--632, 2010.

\bibitem{ChekanovPushkar05}
P.~E. Pushkar{\cprime} and Yu.~V. Chekanov.
\newblock Combinatorics of fronts of {L}egendrian links, and {A}rnol\cprime d's
  4-conjectures.
\newblock {\em Uspekhi Mat. Nauk}, 60(1(361)):99--154, 2005.

\bibitem{Rutherford06}
Dan Rutherford.
\newblock The {T}hurston--{B}ennequin number, {K}auffman polynomial, and ruling invariants of a {L}egendrian link: the {F}uchs conjecture and beyond.
\newblock {\em Int. Math. Res. Not.}, vol. 2006, Art. ID 78591, 1--15, 2006.


\bibitem{Sabloff05}
Joshua~M. Sabloff.
\newblock Augmentations and rulings of {L}egendrian knots.
\newblock {\em Int. Math. Res. Not.}, (19):1157--1180, 2005.


\end{thebibliography}

\end{document}